\newcommand{\ff}{\mathcal{F}}
\newcommand{\cG}{\mathcal{G}}
\newcommand{\cS}{\mathcal{S}}
\newcommand{\cR}{\mathcal{R}}
\newcommand{\cA}{\mathcal{A}}
\newcommand{\cT}{\mathcal{T}}
\newcommand{\cC}{\mathcal{C}}
\newcommand{\cU}{\mathcal{U}}
\newcommand{\bpi}{\bm{\pi}}
\newcommand{\agr}{\operatorname{agr}}
\newcommand{\agrparam}[3]{\operatorname{agr}_{#1}\left(#2, #3\right)}
\newcommand{\bx}{\mathbf{x}}
\newcommand{\1}{\mathds{1}}
\renewcommand{\leq}{\leqslant}
\renewcommand{\geq}{\geqslant}
\renewcommand{\le}{\leqslant}
\renewcommand{\ge}{\geqslant}
\newcommand{\by}{\mathbf{y}}
\renewcommand{\mid}{\;:\;}
\newcommand{\PP}{\mathbb{P}}
\newcommand{\EE}{\mathbb{E}}
\newcommand{\N}{\mathbb{N}}
\newcommand{\RR}{\mathbb{R}}
\newcommand{\cAcodes}{\mathcal{C}}
\newcommand{\Stab}{\operatorname{Stab}}
\DeclareMathOperator{\poly}{poly}
\newtheorem{theorem}{Theorem}
\newtheorem{proposition}[theorem]{Proposition}
\newtheorem{lemma}[theorem]{Lemma}
\newtheorem{corollary}[theorem]{Corollary}
\newtheorem{definition}{Definition}
\newtheorem{claim}[theorem]{Claim}
\newtheorem{assumption}{Assumption}
\newtheorem{problem}{Probem}
\title{Forbidding just one intersection for short integer sequences}
\author{Elizaveta Iarovikova \quad Fedor Noskov \quad  Georgy Sokolov \quad Nikolai Terekhov}
\begin{document}

\maketitle

\begin{abstract}
	In this paper, we study the famous Erd\H{o}s--S\'os forbidden intersection problem for words over an alphabet of size $m$: what is the maximal size of a subfamily $\mathcal{F}$ of $[m]^n$ that does not contain two vectors $x, y$ coinciding on exactly $t - 1$ coordinates? We answer this question provided $m \ge \operatorname{poly}(t)$ and $n \ge \operatorname{poly}(t)$ for some polynomial function $\operatorname{poly}(\cdot)$ of $t$, greatly extending the recent result of Keevash, Lifshitz, Long and Minzer. Our proof combines some of the recently developed methods in extremal combinatorics, including the spread approximation technique of Kupavskii and Zakharov and the hypercontractivity approach developed in a series of works by Keevash, Keller, Lifshitz, Long, Marcus and Minzer.
\end{abstract}

\tableofcontents

\newpage

\section{Introduction}

Let $\binom{[n]}{k}$ be a family of $k$-subsets of $[n] = \{1, \ldots, n\}$. In 1971, Erd\H{o}s and S\'os (see~\cite{erdos1975problems}) considered the following problem.

\begin{problem}
\label{problem: standard erdos-sos}
Fix $n \ge k \ge t \ge 1$. What is the maximal size of a family $\ff \subset \binom{[n]}{k}$ such that for any $F_1, F_2 \in \ff$ we have $|F_1 \cap F_2| \neq t - 1$?
\end{problem}

Problem~\ref{problem: standard erdos-sos} is a harder version of the following question raised by Erd\H{o}s, Ko and Rado \cite{erdos1961intersection} ten years earlier.

\begin{problem}
\label{problem: erdos-ko-rado}
Fix $n \ge k \ge t \ge 1$. What is the maximal size of a family $\ff \subset \binom{[n]}{k}$ such that for any $F_1, F_2 \in \ff$ we have $|F_1 \cap F_2| \ge t$?
\end{problem}

Erd\H{o}s, Ko and Rado settled it for $n \ge n_0(k)$ in the famous Erd\H{o}s--Ko--Rado theorem~\cite{erdos1961intersection}. While Problem~\ref{problem: erdos-ko-rado} was completely solved by Ahlswede and Khachatrian~\cite{ahlswede1997complete} after several improvements due to Frankl~\cite{frankl1978erdos}, Wilson~\cite{wilson1984exact} and Frankl and F\"{u}redi~\cite{frankl1991beyond}, Problem~\ref{problem: standard erdos-sos} remains open. Erd\H{o}s and S\'os conjectured\footnote{The original conjecture of Erd\H os and S\'os is for families avoiding intersection size $1$, the general case is due to Erd\H os.} that any extremal solution of Problem~\ref{problem: standard erdos-sos} has the following form:
\begin{align*}
	\ff_{T} = \left \{ F \in \binom{[n]}{k} \mid T \subset F \right \}
\end{align*}
for some set $T$ of size $t$, provided $k \ge 2t$ and $n \ge n_0(k)$.\footnote{Note that $\ff_{T}$ is the unique up to an isomorphism extremal example for Problem~\ref{problem: erdos-ko-rado} if $n \ge (t + 1)(k - t + 1)$~\cite{frankl1978erdos, wilson1984exact}.} The conjecture was verified by Frankl and F\"uredi in the seminal paper~\cite{frankl_forbidding_1985}. However, the other regimes of parameters, when $t$ and $k$ grow with $n$, are also important and found numerous applications in discrete geometry~\cite{frankl1990partition, kahn1993counterexample}, communication complexity~\cite{sgall1999bounds} and quantum computing~\cite{buhrman1998quantum}. Great progress on the problem was made by Keller and Lifshitz who solved Problem~\ref{problem: standard erdos-sos} when $n$ is linear in $k$~\cite{keller2021junta}, i.e. $n \ge f(t) k$ for some function $f(\cdot)$. Later, Ellis, Keller and Lifshitz~\cite{ellis2024stability} extended the previous result to the regime when $k \le (1/2 - \varepsilon) n$ for any small enough $\varepsilon$ provided $n \ge n_0(t, \varepsilon)$. Recently, Kupavskii and Zakharov~\cite{kupavskii_spread_2022} managed to solve Problem~\ref{problem: standard erdos-sos} under the polynomial dependencies between $n, k, t$, e.g. when $n = k^\alpha$, $t = k^{\beta}$ with $0 < \beta < 1/2$ and $\alpha > 1+ 2 \beta$, provided $k \ge k_0(\alpha, \beta)$. Partial results were obtained by Cherkashin~\cite{cherkashin2024set} and Keevash, Mubayi and Wilson~\cite{keevash2006set}. It is also worth mentioning the celebrated Frankl and Wilson result~\cite{frankl1981intersection} that bounds the size of $\ff$ in Problem~\ref{problem: standard erdos-sos} by $\binom{n}{k - t}$ provided $k \ge 2t - 1$ and $k - t + 1$ is a prime power.

In recent decades, a rich and active direction of research has emerged: extending results from extremal set theory to families of various mathematical objects. The aforementioned Erd\H{o}s--Ko--Rado theorem~\cite{erdos1961intersection} has received numerous extensions to families of permutations~\cite{frankl1977maximum, cameron2003intersecting, larose2004stable, ellis2011intersecting,kupavskii_spread_2022,keller_t-intersecting_2024,kupavskii2024almost}, finite vector spaces~\cite{frankl1986erdos}, integer sequences~\cite{ahlswede1998diametric, frankl_erdos-ko-rado_1999}, matchings~\cite{kamat2013erdHos, fuentes2025intersecting}, independent sets of a graph~\cite{borg2009erdHos, borg2014erdos, hurlbert2011erdHos, woodroofe2011erdHos, frankl2024holroyd}(see also the survey~\cite{hurlbert2025survey}) and many other objects~\cite{chvatal2006intersecting,frankl2025intersecting,saengrungkongka2025t,iarovikova2025complete,kupavskii2023erd,kupavskii2023intersection}(see also the survey~\cite{ellis2022intersection}). 

In contrast, there are much fewer works on generalization of Problem~\ref{problem: standard erdos-sos} to other domains, mostly due to its hardness even in the $\binom{[n]}{k}$ case. There are some results for permutations~\cite{ellis2014forbidding, ellis2022approximation, keevash2017frankl, keller2025forbidden}, vector spaces~\cite{ellis2023forbidden} and integer sequences~\cite{keevash2023forbidden}. Significant progress in this direction was made by Kupavskii and Zakharov~\cite{kupavskii_spread_2022}. In particular, they present results for subfamilies of $[m]^n, \binom{[n]}{k}^w$, permutations and $(n, k, w)$-designs. However, assumptions of their theory turn out to be quite restrictive in our setup, which we discuss next. 

In this paper, we focus on Problem~\ref{problem: standard erdos-sos} adapted to subfamilies of $[m]^n$ for some integers $m, n$. For two vectors $x, y \in [m]^n$, we define their agreement $\agr(x, y)$ by
\begin{align*}
	\agr(x, y) = |\{i \in [n] \mid x_i = y_i\}|.
\end{align*}
Then, the question we study in this paper is as follows.
\begin{problem}
\label{problem: erdos-sos for codes}
Let $m, n, t$ be integers such that $m \ge 2$ and $n \ge t \ge 1$. What is the maximal size of a subfamily $\ff \subset [m]^n$ such that for any $x, y \in \ff$ we have $\agr(x, y) \neq t - 1$?
\end{problem}

We will refer to such families $\ff$ as \textit{$(t-1)$-avoiding}.

Note that for $t = 1$ this problem was resolved independently by Frankl and Tokushige~\cite{EKR_for_codes} and Ahlswede and Khachatrian~\cite{ahlswede1998diametric}. A more general result was obtained by Keevash and coauthors~\cite{keevash2023forbidden}.

\begin{theorem}[Theorem 1.1 of~\cite{keevash2023forbidden}]
\label{theorem: Keevash EKR for codes}
	Define families
	\begin{align*}
		\cS_{t,r}[m]^n = \{x \in [m]^n \mid |\{j \in [t + 2r] \mid x_j = 1\}| \ge t + r\}.
	\end{align*}
	Then, for any $t \in \N$ there is $n_0$ such that if $\ff \subset [m]^n$ is a $(t - 1)$-avoiding family with $m \ge 3$ and $n \ge n_0$, then $|\ff| \le \max_{r \ge 0} |\cS_{t,r}[m]^n|$ with equality only if $\ff$ is isomorphic to $\cS_{t,r}[m]^n$ for some $r \ge 0$.
\end{theorem}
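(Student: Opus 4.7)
The plan is to follow the junta approximation + bootstrap paradigm, refined by the hypercontractive machinery developed in a series of works by Keevash, Keller, Lifshitz, Long, Marcus and Minzer. First, I would show that any sufficiently large $(t-1)$-avoiding family $\ff \subset [m]^n$ is essentially captured by a \emph{$t$-intersecting junta}: there exist a set $J \subset [n]$ with $|J| = O_t(1)$ and a family $\mathcal{J} \subset [m]^J$ with $\agr(u, v) \ge t$ for all $u, v \in \mathcal{J}$, such that $|\ff \setminus \{x \in [m]^n \mid x|_J \in \mathcal{J}\}|$ is negligible compared with $|\ff|$. This kind of approximation is exactly what global hypercontractivity in product spaces is designed to deliver when only a single intersection value is forbidden: roughly, the indicator of $\ff$ has vanishingly small ``global'' mass and hence concentrates on a junta pattern.

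Second, having approximated $\ff$ by a $t$-intersecting junta on a bounded ground set, I would invoke the resolution of the diametric problem in $[m]^n$ by Ahlswede and Khachatrian~\cite{ahlswede1998diametric} (the word analogue of Problem~\ref{problem: erdos-ko-rado}). Combined with the product structure on the coordinates outside $J$, this identifies $\mathcal{J}$, up to isomorphism, with the restriction of some $\cS_{t, r}[m]^n$ to $J$ for an optimally chosen $r \ge 0$, giving $\ff \subset \cS_{t, r}[m]^n$ up to a small exceptional set.

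Third, I would carry out the \emph{bootstrap}, using the forbidden-intersection condition itself to discard this exceptional set. If $x \in \ff$ lay outside $\cS_{t, r}[m]^n$, then by a standard shifting / double-counting argument over the $n - |J|$ free coordinates, one could produce $y \in \ff \cap \cS_{t, r}[m]^n$ with $\agr(x, y) = t - 1$, a contradiction. This is where the hypothesis $n \ge n_0(t)$ enters: we need enough free coordinates so that the achievable agreement sizes on $[n] \setminus J$ span all values around $t - 1$.

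The main obstacle is the first step. For the $t$-intersection problem a crude junta approximation suffices, but here the error term must be much smaller than the gap $|\cS_{t, r}[m]^n| - |\cS_{t, r'}[m]^n|$ for $r' \neq r$; otherwise the bootstrap cannot pin down a unique $r$ or rule out additional elements. Achieving this exponential sharpness is precisely the role of global hypercontractive inequalities, and they must be formulated uniformly across the whole range of $m$ and $n$ in which the extremal $r$ varies -- a secondary difficulty, since the extremum in $\max_{r \ge 0}|\cS_{t, r}[m]^n|$ jumps between different $r$ as the parameters change, forcing the junta classification to be compatible with the entire spectrum rather than a single Frankl family.
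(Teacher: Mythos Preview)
This theorem is not proved in the present paper at all: it is quoted as Theorem~1.1 of~\cite{keevash2023forbidden} and used only as background to motivate the main result (Theorem~\ref{theorem: main}). There is therefore no ``paper's own proof'' to compare your proposal against. Your outline is a faithful high-level description of the strategy used in the original source~\cite{keevash2023forbidden}---junta approximation via global hypercontractivity, identification of the junta with an Ahlswede--Khachatrian extremal family, and a bootstrap to remove the exceptional set---so as a sketch of \emph{that} paper's argument it is accurate, but it is not something the present paper undertakes or needs.
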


The function $n_0(t)$ depends on $t$ super-exponentially in their proof. 

Observe that when $m > t$ the unique maximum in $\max_{r}|\cS_{t,r}[m]^n|$ is attained for $r = 0$. Thus, if $m > t$, then the only extremal families are $t$-stars $\{x \in [m]^n \mid x_{T} = z\}$ for some $T \subset [n]$ of size $t$ and $z \in [m]^T$, provided $n \ge n_0(t)$.

Another result was obtained by Kupavskii and Zakharov~\cite{kupavskii_spread_2022}.

\begin{theorem}[Proposition 23 of~\cite{kupavskii_spread_2022}]
\label{theorem: KZ erdos-sos for m-n}
Suppose that integers $m, n, t$ satisfy $$m \ge \max\{2^{15} \ln n, 2^{17} q \ln q\} \text{ and }n \ge 2 q$$ for $q = 800t^2 \ln m$. If $\ff \subset [m]^n$ is a $(t - 1)$-avoiding family and $|\ff| \ge 2/3 \cdot m^{n - t}$, then there exists a $t$-star $\cS_{(T, z)} = \{x \in [m]^n \mid x_{T} = z\}$ for some $T \subset [n]$ of size $t$ and $z \in [m]^T$ such that $|\ff \setminus \cS_{(T, z)}| \le m^{-4t}|\cS_{(T, z)}|$.
\end{theorem}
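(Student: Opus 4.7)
The plan is to apply the spread approximation framework of Kupavskii and Zakharov: approximate $\ff$ by a small union of stars together with a highly spread remainder, and then use $(t-1)$-avoidance to collapse the approximation to a single $t$-star.

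First, run a greedy peeling on $\ff$ with stars $\cS_{(T, z)} = \{x \in [m]^n \mid x_T = z\}$ of rank $|T| \le t$ as patterns: while some star captures a disproportionate share of the leftover family (say $|\ff \cap \cS_{(T, z)}| \ge \alpha \cdot m^{-|T|}|\ff|$ for an appropriate threshold $\alpha$), add it to the approximation and remove its elements. This yields generating stars $\cS_{(T_1, z_1)}, \ldots, \cS_{(T_\ell, z_\ell)}$ together with a remainder $\cR$ which, by construction, is $r$-spread for a parameter $r$ calibrated to the bounds on $m, n, q$ in the hypothesis.

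Next, use the spreadness of $\cR$ together with $(t-1)$-avoidance to bound $|\cR| \le m^{-4t} \cdot m^{n-t}$. The mechanism is a Talagrand-style sunflower extraction: a sufficiently dense $r$-spread family in $[m]^n$ contains a sunflower with many petals over a small core, and combining this with a random restriction of the coordinates outside $\bigcup_i T_i$ produces two vectors with agreement exactly $t-1$, either inside $\cR$ or across $\cR$ and some $\cS_{(T_i, z_i)} \cap \ff$, contradicting the hypothesis on $\ff$. The quantitative factor $m^{-4t}$ and the specific conditions $m \ge 2^{17} q \ln q$ with $q = 800 t^2 \ln m$ and $n \ge 2q$ arise from balancing the threshold $\alpha$, the spread $r$, and the success probability of the restriction.

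Finally, the density hypothesis $|\ff| \ge \tfrac{2}{3} m^{n-t}$ pins the approximation down to a single $t$-star. A generating star of rank less than $t$ contains only an Ahlswede--Khachatrian-type $(t-1)$-avoiding subfamily of $\ff$, so applying this recursively forces at least one generating star of rank exactly $t$ to dominate. Two distinct $t$-stars cannot both be generating, because they admit a forbidden pair $x \in \cS_{(T_i, z_i)} \cap \ff$, $y \in \cS_{(T_j, z_j)} \cap \ff$ with $\agr(x, y) = t-1$: match the values on $T_i \cap T_j$, then engineer a single further agreement or disagreement outside to land on the forbidden intersection size (when $m > t$, the alphabet freedom ensures both vectors can be chosen inside $\ff$ by averaging over the dense intersections $\cS_{(T_i, z_i)} \cap \ff$ and $\cS_{(T_j, z_j)} \cap \ff$). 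Hence $\ell = 1$ with $|T_1| = t$, and the bound on $\cR$ completes the proof. The main obstacle is the quantitative work in Step 2, where one must combine spread approximation with a sunflower/restriction argument to extract a $(t-1)$-intersecting pair from an $r$-spread remainder; this is exactly what dictates the polylogarithmic-polynomial dependencies of $m$ and $n$ on $t$.
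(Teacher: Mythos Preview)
This statement is quoted in the paper as Proposition~23 of Kupavskii--Zakharov~\cite{kupavskii_spread_2022}; it is a cited result from the literature, and the present paper contains no proof of it. So there is nothing here to compare your proposal against.

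That said, your sketch roughly matches the Kupavskii--Zakharov strategy but misstates several key mechanisms. First, the peeling is not done with stars of rank $\le t$: one peels with restrictions of size up to $q \approx t^2 \ln m$ (this is exactly the $q$ in the hypothesis), obtaining a $\tau$-homogeneous core $\cS$ of such restrictions. The remainder bound $|\cR| \le \tau^{-q} m^n$ falls out of the peeling construction itself, not from a separate sunflower argument applied to $\cR$. Second, the substantive step is to prove that the restrictions in $\cS$ are pairwise $t$-agreeing; this is where the $(t-1)$-avoidance is used, via a cross-disagreement argument between two homogeneous pieces (essentially the forerunner of Lemma~\ref{lemma: t-intersectionness of approximation} in this paper). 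Only after that does one invoke the simplification argument (Theorem~\ref{theorem: simplification}/Corollary~\ref{corollary: simplification argument} here) to collapse a nontrivially $t$-intersecting $\cS$ to a single $t$-star. Your Step~3, which tries to rule out two distinct generating $t$-stars by a direct ad hoc construction, is not how the argument runs and would not by itself handle the case where $\cS$ contains many restrictions of size strictly larger than $t$.

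In short: right framework, but the roles of $q$, the source of the remainder bound, and the mechanism forcing a single $t$-star are all off. Since the paper does not supply a proof, if you want the details you should consult~\cite{kupavskii_spread_2022} directly.
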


Despite the restrictions $m \ge 2^{15} \ln n$ and $n \ge 1600 t^2 \ln m$, their finding suggests that the result of Theorem~\ref{theorem: Keevash EKR for codes} can be potentially obtained when $m, n$ depend polynomially on $t$. Thus, it is natural to conjecture that the conclusion of Theorem~\ref{theorem: Keevash EKR for codes} holds when $m \ge \poly(t)$ and $n \ge \poly(t)$, without logarithmic lower bounds on $m$ and $n$. Here $\poly(\cdot)$ hides some polynomial functions. Our main result confirms this conjecture.

\begin{theorem}
    \label{theorem: main}
    There exist \textbf{polynomials} $n_0(\cdot)$ and $m_0(\cdot)$ such that for all $t \geq 1, n \geq n_0(t), m \geq m_0(t)$ the following holds. If $\mathcal{F} \subset [m]^n$ is $(t - 1)$-avoiding, then $|\mathcal{F}| \le m^{n-t}$ and the equality holds if and only if $\ff = \cS_{(T, z)}$ for some $T \subset [n]$ of size $t$ and $z \in [m]^T$.
\end{theorem}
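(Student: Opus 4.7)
The plan is to combine the spread approximation technique of Kupavskii and Zakharov with hypercontractive estimates on product spaces developed in the Keevash--Keller--Lifshitz--Long--Marcus--Minzer line of work, in order to replace the logarithmic dependence of Theorem~\ref{theorem: KZ erdos-sos for m-n} on $m$ and $n$ by a polynomial one. The argument proceeds in three stages: an approximation of $\ff$ by a short list of $t$-stars, a refinement forcing that list to have length one, and a final bootstrap that tightens the approximation to the sharp bound together with the uniqueness of the extremal family.

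First, I would iteratively peel off $t$-stars whose density in the current residual of $\ff$ is abnormally large, stopping once the residual becomes $r$-spread for a suitable polynomial threshold $r = r(t)$. Whereas Kupavskii and Zakharov then apply the spread lemma and pay an $\Omega(\log n)$ price in $m$, I would instead bound the residual via a level-$d$ hypercontractive inequality on $[m]^n$ tailored to the $(t - 1)$-avoiding constraint: a pseudorandom such family cannot be larger than $\poly(t) \cdot m^{n - t - 1}$ once $m, n \ge \poly(t)$. The output is a short list of stars $\cS_{(T_1, z_1)}, \dots, \cS_{(T_s, z_s)}$ with $s \le s_0(t)$ and
\begin{align*}
\left| \ff \setminus \bigcup_{i = 1}^s \cS_{(T_i, z_i)} \right| \le m^{-Ct} \cdot m^{n - t}
\end{align*}
for any prescribed large constant $C$.

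Next, I would show $s = 1$. Two distinct approximating stars $\cS_{(T, z)}$ and $\cS_{(T', z')}$ with large trace on $\ff$ produce, after picking typical $x \in \ff \cap \cS_{(T, z)}$ and $y \in \ff \cap \cS_{(T', z')}$, agreement equal to $t - 1$ with positive probability: the contribution from $T \cup T'$ is pinned down by the star constraints, while the values on $[n] \setminus (T \cup T')$ are nearly uniform by the junta structure, so $\agr(x, y) = t - 1$ occurs with density bounded below by $\Omega(1/\poly(t, m))$, contradicting the forbidden intersection assumption. A single star $\cS_{(T, z)}$ then survives with $|\ff \setminus \cS_{(T, z)}| \le m^{-Ct} \cdot m^{n - t}$. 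The third stage is a weighted double count: setting $\cE = \ff \setminus \cS_{(T, z)}$ and $\cM = \cS_{(T, z)} \setminus \ff$, every $x \in \cE$ spawns many $y \in \cS_{(T, z)}$ with $\agr(x, y) = t - 1$, all of which must belong to $\cM$ because $\ff$ is $(t - 1)$-avoiding. Comparing this count to the reverse count over $\cM$ yields $|\cE| \le |\cM|$ provided $m^{-Ct} \cdot m^{n - t}$ is small compared to $m^{n - t - 1}$, hence $|\ff| \le m^{n - t}$ with equality iff $\cE = \cM = \emptyset$, i.e.\ $\ff = \cS_{(T, z)}$.

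I expect the first stage to be the main obstacle. The spread lemma loses a logarithmic factor in $m$ because its coupling is too blunt to exploit the forbidden intersection structure; replacing it by a hypercontractive argument requires adapting level-$d$ inequalities from biased Boolean cubes to the uniform measure on $[m]^n$ with polynomial parameters, and then threading these inequalities through each peeling step so that the cumulative lower bound required on $m$ stays polynomial in $t$. The bookkeeping is delicate because a single peeling step simultaneously degrades the spreadness, the hypercontractive level $d$, and the density thresholds that the argument depends on.
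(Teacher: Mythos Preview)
Your three-stage outline has the right overall shape, but Stage~1 hides the real difficulty and, as stated, does not work. First, the spread approximation does not yield a short list of $t$-stars: it outputs restrictions $(Z_i,x_i)$ of size up to some $q\gg t$, and passing from these to a single $t$-star is precisely where the paper spends most of its effort. Your proposed shortcut is the assertion that a pseudorandom $(t-1)$-avoiding family has size at most $\poly(t)\cdot m^{n-t-1}$ once $m,n\ge\poly(t)$; this is not a consequence of any known hypercontractive inequality. Hypercontractivity on $[m]^n$ bounds noise-operator norms of global functions, not the size of families satisfying a combinatorial constraint. In the paper it enters only indirectly: it controls $\Stab_\rho(\1_\ff)$ for $\tau$-homogeneous $\ff$, which feeds into a measure-boosting argument via random gluings $[m]^n\to[m']^n$ and ultimately shows that two homogeneous pieces whose restrictions agree on fewer than $t$ coordinates must cross-contain an exact $(t-1)$-agreement. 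That is how one forces the spread-approximation restrictions to be $t$-agreeing, after which a separate simplification argument reduces to a single star.

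The second and more structural gap is that you do not account for the regime split that the paper cannot avoid. The gluing-based measure boosting reduces $m$ by a constant factor at each step, so it needs roughly $\log m$ steps and hence $n\ge\poly(t)\log m$; this is intrinsic, not an artifact. When $m$ is huge relative to $n$ (say $m\ge n^{Ct}$), the boosting route is unavailable, and the paper switches to an entirely different machinery: $(\cS,2,t)$-systems, a covering family $\widetilde{\cT}\subset\partial_t[m]^n$ of size $\le m^{1/3}\ln m$ (not $\poly(t)$), and repeated use of the Kruskal--Katona theorem for codes to force almost all of $\ff$ into a single $t$-star. Your plan contains no substitute for this regime. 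Finally, your Stage~2 probabilistic argument is too loose: even if two stars carry large mass from $\ff$, hitting agreement exactly $t-1$ requires controlling the behavior of $\ff$ inside each star on a specific set of $O(t)$ coordinates, which is why the paper first manufactures $\tau$-homogeneous subfamilies and a common restriction before appealing to the cross-disagreement lemma.
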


\noindent \textbf{\textit{Remark.}} One can choose $m_0(t) = O(t^{300})$. The polynomial $n_0(t)$ can be replaced by a function $\tilde{n}_0(t) = C t^3 \ln t$ for some absolute constant $C$. 

\vspace{0.2cm}

The proof combines ideas from Keevash and coauthors~\cite{keevash2023forbidden}, Kupavskii and Zakharov \cite{kupavskii_spread_2022}, some recent developments of Kupavskii~\cite{kupavskii2023intersection}, a paper of Keller and coauthors on hypercontractivity for some pseudorandom functions~\cite{keller_sharp_2023} and a paper of Kupavskii and the second author~\cite{kupavskii2025lineardependenciespolynomialfactors}.

Our proof technique can be adapted to Problem~\ref{problem: standard erdos-sos}, yielding the solution when $k \ge \poly(t)$ and $n \ge \poly(t) \cdot k$, but we postpone it to more general theory~\cite{kupavskii_noskov_followup}.

The paper is organized as follows. In Section~\ref{section: notation}, we introduce the necessary notation. In Section~\ref{section: preliminaries}, we present the main tools used in the proof.  In Section~\ref{section: cross-disagreement in homogeneous families}, we develop some theory necessary for the analysis of the case $m \ge \poly(t)$ and $n \ge \poly(t) \ln m$. In Section~\ref{section: measure boosting argument case}, we present the solution of Problem~\ref{problem: erdos-sos for codes} when $m \ge \poly(t)$ and $n \ge \poly(t) \ln m$. In Section~\ref{section: classical extremal set theory case}, we present the solution of Problem~\ref{problem: erdos-sos for codes} when $m \ge n^{\poly(t)}$ and $n \ge \poly(t)$. Finally, in Section~\ref{section: proof of the main theorem}, we combine the results of Sections~\ref{section: measure boosting argument case} and~\ref{section: classical extremal set theory case} to prove Theorem~\ref{theorem: main}.

\section{Notation}
\label{section: notation}
We use standard notation $[n]$ for the set $\{1,\ldots, n\}$ and ${X\choose k}$ for the set of all $k$-element subsets of $X$. Given families $\ff$, $\mathcal{S}$ and a set $X$, we define
$$\ff(X) :=\{ A\setminus X  :\    A \in \ff, \,  X \subset A \}$$
$$\ff[X] :=\{ A  : \ A\in \ff, \,  X \subset A \}$$
$$\ff[ \mathcal{S} ] := \bigcup\limits_{X \in \mathcal{S}}\ff[X].$$
$$\ff( \mathcal{S} ) := \bigcup\limits_{X \in \mathcal{S}}\ff(X).$$

We denote $\partial_t\ff$ the set of all $t$-shadows of $\ff$, i.e. all $t$-element subsets of sets from $\ff$. We also denote the collection of all shadows of size at most $t$ as $$\partial_{\le t}\ff = \bigcup_{0\le q \le t}\partial_q\ff.$$ Lastly, by $\ff\vee\{T\}$ we mean $\{F \cup T \mid F\in\ff\}$.

We say that two sets $F_1, F_2$ form a matching, if $F_1 \cap F_2 = \varnothing$. Two families of sets $\ff_1, \ff_2$ cross-contain a matching (or contain a cross-matching) if there are two sets $F_1 \in \ff_1$, $F_2 \in \ff_2$ such that $F_1 \cap F_2 = \varnothing$.

Depending on the context, we will identify the families of codes $\ff \subset [m]^n$ with families of sets from some ambient family $\cAcodes \subset \binom{I_1 \sqcup I_2 \ldots \sqcup I_n}{n}$, where $I_i$ consists of integers from $(i - 1)m + 1$ to $im$. We identify each code $x \in [m]^n$ with a set $X$ that intersects each $I_i$ by the element $(i - 1)m + x_i$ only. Then $\cAcodes$ is a family of all such sets $X$:
\begin{align}
\label{eq: codes set family definition}
    \cAcodes = \left \{X \in \binom{[mn]}{n} \mid \text{for all }i \in [n] \text{ we have }|X \cap I_i| = 1 \right \}.
\end{align}

Given $Z \subset [n]$ and $x \in [m]^T$, we call $x$, or, if we want to emphasize the set of coordinates, $(Z, x)$, a \textit{partial code} or a \textit{restriction}. Partial codes are shadows of elements of $[m]^n$. We identify them with shadows of elements of $\cAcodes$ in the following way: a restriction $(Z, x)$ corresponds to $$T = \{i(z-1) + x_z\mid z\in Z\} \in \partial_{|Z|}\cAcodes.$$ We write $$T = (Z, x); \quad \ff[Z \to x] = \ff[T]; \quad \ff(Z \to x) = \ff(T).$$

If $|Z| = 1,$ i.e. $Z = \{i\}$ for some $i \in [n]$, we may write $\ff[i \to x]$ and $\ff(i \to x)$ instead of $\ff[\{i\} \to x]$ and $\ff(\{i\} \to x)$.

Given an integer $t$, a set $T$ of size $t$, and a code $z \in [m]^T$, we refer to a family $\{x \in [m]^n \mid x_T = z\}$ as $t$-star. We call subfamilies of $t$-stars incomplete $t$-stars.

Sometimes we consider a restriction of a code or a family of codes on a particular set of coordinates. Given $Z \subset Z_1 \subset [n]$ and $y \in [m]^{Z_1}$, by $y|_Z$ we mean $y' \in [m]^Z$ such that $y_z = y'_z$ for all $z \in Z$. For a family $\ff$ we denote its restriction $\ff|_Z = \{y|_Z \mid y \in \ff\}$. We are also interested in the agreement of two partial codes on a subset of the set of their common coordinates. Specifically, given sets $Z_1, Z_2 \subset [n]$, $Z \subset Z_1 \cap Z_2 $ and partial codes $x_1 \in [m]^{Z_1}$, $x_1 \in [m]^{Z_2}$, we denote $$\agr_Z(x_1, x_2) = \agr(x_1|_Z, x_2|_Z).$$

Finally, we discuss the notation concerning measures and their restrictions. We consider probability measures on $[m]^n$ or $[m]^Z, Z\subset [n]$. Two common examples are the uniform measure $\mu$ and the class of product measures, i.e., the measures $\nu$ that could be represented as $\nu(x) = \prod_{i \in Z}\nu_i(x_i)$, where $\nu_i$ are some probability measures on $[n]$. Note that a uniform measure is also a product measure although we mention it separately. Given a partial code $(Z, x)$ and a measure $\nu$ on $[m]^n$, the restricted measure $\nu_{Z \to x}$ on $[m]^n(Z \to x)$ is defined as $$\nu_{Z \to x}(y) = \frac{\nu((x, y))}{\nu([m]^n[Z \to x])},$$ where $(x, y)$ is a code that coincides with $x$ on $Z$ and with $y$ on $[n]\setminus Z$. If $Z = \{i\}$, we sometimes write $\nu_{i \to x}$ instead of $\nu_{\{i\} \to x}$.

For a product measure $\nu = \bigotimes_{i=1}^n\nu_i$, we define its restriction on the set of coordinates $Z$ as $$\nu_{Z} = \prod_{i \in Z}\nu_i.$$ Note that, for a product measure, $\nu_{Z \to x}$ does not depend on $x$, and we have $\nu_{Z \to x} = \nu_{[n]\setminus Z}$. 

%\textcolor{red}{Define:}

%\begin{itemize}
    %\item $\ff(Z \to x), \ff[Z \to x]$
    %\item Define $\nu_{S \to x}$
    %\item \textcolor{blue}{GS: Наверное, обозначение $\nu_{S \to x}(\cdot)$ лучше в Notation. Также для product меры надо где-то ввести обозначение $\nu_{Z}(x)=\prod_{i \in Z} \nu_i(x_i)$ и заметить, что для product меры $\nu_{S \to x}$ не зависит от $x$ и равно $\nu_{[n]\setminus S}$.}
    %\textcolor{magenta}{
    %\item LY: Classical set-theoretic notation: $\ff(S), \ff[S], \partial_t, \partial_{\le t}, \ff\vee\{T\}$
    %\item Я частно обозначаю restriction как пару $(Z, x)$, где $x \in [m]^Z$, можно о таком способе тоже написать: если $T = (Z, x)$, то $\ff[Z \to x]$ совпадает с $\ff[T]$ 
    %\item $\agr_Z(x, y)$
    %\item $y|_Z, \cR|_Z$ --- сужениена подмножество координат
%    }
%\end{itemize}

\section{Preliminaries}
\label{section: preliminaries}

        We treat the regimes $\poly(t) \le m \le e^{Cn/ t^2}$ and $m \ge n^{Ct}$ separately, and each regime requires a completely different set of tools.  Note that for our assumptions on $n$ these regimes overlap. 
        
        We treat the case $m \le e^{Cn/t^2}$ using a measure boosting argument, which dates back to Dinur and Friedgut~\cite{dinur_intersecting_2009}. We illustrate it on subfamilies of sets from some ambient family $\cA$ (which is $\cC$ in our case) endowed with the uniform measure $\mu(\cdot)$.  The argument is based on the following informal observation: \textit{given two pseudorandom families $\ff_1, \ff_2$, under mild assumptions on their measure $\mu(\cdot)$ in some ambient family $\cA$, one can obtain families $\widetilde{\ff}_1, \widetilde{\ff}_2$ in (probably, different) ambient family $\widetilde{\cA}$ such that 
        \begin{enumerate}
            \item they have a constant volume w.r.t. some other measure $\widetilde{\mu}(\cdot)$ on $\widetilde{\cA}$;
            \item if $\widetilde{\ff}_1, \widetilde{\ff}_2$ cross-contain a matching, then $\ff_1, \ff_2$ do.
        \end{enumerate}}
        The final trick of this argument is that one can easily prove the existence of cross-matching in families of large measure, leading to the conclusion: \textit{pseudorandom families cross-contain a matching}. It turns out that, in many Tur\'an-type problems, extremal families admit decomposition into relatively simple pseudorandom subfamilies, and the above reasoning allows to relax conditions on problem parameters.

        The efficiency of this idea significantly varies with the notion of pseudorandomness and the range family $\widetilde{\cA}$ used. The original definition of pseudorandomness due to Dinur and Friedgut called \textit{uncapturability} plays the key role in the celebrated junta method~\cite{keller2021junta, ellis2023forbidden, keevash2023forbidden, lifshitz2020set}. Later, Alweiss and coathours~\cite{alweiss2021improved} made a breakthrough result in the Erd\H{o}s sunflower problem using another notion of pseudorandomness called \textit{spreadness}. Using their results together with delicate combinatorial arguments, Kupavskii and Zakharov~\cite{kupavskii_spread_2022} developed the spread approximation technique for which they introduced a new concept of pseudorandomness named $\tau$-\textit{homogeneity}. They apply their method to obtain solutions of the Deza--Frankl problem on $t$-intersecting permutations and the Erd\H{o}s--S\'os forbidden intersection problem under polynomial dependencies between parameters. The technique was further improved in subsequent works of Kupavskii and coathours~\cite{kupavskii2023erd,kupavskii2023intersection,kupavskii2024almost,kupavskii2025lineardependenciespolynomialfactors, frankl2025hajnal, iarovikova2025complete} yielding a large variety of results in extremal set theory. In particular, in this paper we enhance the spread approximation technique for the families of codes. At the same time, Keevash and coathours~\cite{keevash2021global,keevash2023forbidden} presented the notion of \textit{global} families, and using it they reached large progress in Tur\'an-type problems on forbidden large sparse hypergraphs. Then, independently of Kupavskii and Zakharov, they modified their definition of global families (so it coincides now with the $\tau$-homogeneity), and generalized it for functions on a product space~\cite{keller_sharp_2023}. The main result of their findings is a new hypercontractivity inequality for global functions, which we use below; see Theorem~\ref{theorem: hypercontractivity}. In follow-up papers, they show that their approach has a lot of consequences in different fields of discrete mathematics~\cite{gill2024initiating, lifshitz2024completing, keller2024improved, filmus2024hypercontractivity}. 

		In this paper, we modify the measure boosting argument of~\cite{keevash2023forbidden} for families of codes, using the ideas from~\cite{kupavskii2025lineardependenciespolynomialfactors,keller_sharp_2023}. Starting from families $\ff_1, \ff_2 \subset \cA = [m]^n$ and the uniform measure $\mu(\cdot)$, we obtain families $\widetilde{\ff}_1, \widetilde{\ff}_2 \subset \widetilde{\cA}$ and a measure $\widetilde{\mu}(\cdot)$, iteratively applying an operation which the authors of~\cite{keevash2023forbidden} call \textit{gluing}.

		\subsection{Random gluings}

		We start with some definitions.

		\begin{definition}[Definition 6.5~\cite{keevash2023forbidden}]
		\label{definition: gluing}
		We refer to a map $\pi: [m_1] \to [m_2]$ as a gluing in the sense, that it merges elements of $\pi^{-1}(y), y \in [m_2]$, into one element $y$. A gluing $\pi$ is called \textbf{$b$-balanced}, if
		\begin{align*}
			|\pi^{-1}(y)| \le \frac{b m_1}{m_2}.
		\end{align*}
		The family of $b$-balanced gluings from $[m_1]$ to $[m_2]$ is denoted by $\Pi_{m_1, m_2, b}$. If $m_2 \, | \, m_1$ and $b = 1$, we omit subscript $b$.

		A $b$-balanced gluing $\pi: [m_1]^n \to [m_2]^n$ is defined as a tensor product of $b$-balanced gluings $\pi_1, \ldots, \pi_n$, i.e. $\pi(x_1, \ldots, x_n) = (\pi_1(x_1), \ldots, \pi_n(x_n))$. The set of $b$-balanced gluings from $[m_1]^n$ to $[m_2]^n$ by $\Pi_{m_1, m_2, b}^{\otimes n}$. As before, the subscript $b$ is omitted if $b = 1$ and $m_2 \, | \, m_1$.
		\end{definition}

		Then, for a family $\ff \subset [m_1]^n$ and a gluing $\pi$, we define
		\begin{align*}
			\ff^\pi = \{\pi(x) \mid x \in \ff\}.
		\end{align*}
        If $\ff \subset [m]^S$ for some $S \subset [n]$, then, slightly abusing notation, we denote $\ff^{\pi|_S}$ by $\ff^\pi$ as well.
        
		Note that if $\ff_1^\pi, \ff_2^\pi$ cross-contain a disagreement, then $\ff_1, \ff_2$ do.
		Similarly, for a product measure $\nu = \bigotimes_{i = 1}^n \nu_i$ on $[m_1]^n$ and a gluing $\pi = \bigotimes_{i = 1}^n \pi_i \in \Pi_{m_1, m_2, b}^{\otimes n}$, we introduce a measure $\nu^{\pi}(\cdot)$ on $[m_2]^n$, defined as follows:
		\begin{align*}
			\nu^\pi(\{y\}) = \prod_{i = 1}^n \nu(\pi^{-1}_i(y_i)).
		\end{align*}
		Let us comment now on the $b$-balanced property of a random gluing. In what follows, it will be important to ensure that if a measure $\nu$ is similar to a uniform measure in some sense, then the measure $\nu^{\pi}(\cdot)$ also is. We control this similarity introducing the notion of $b$-balanced measures.

		\begin{definition}[Definition 6.7~\cite{keevash2023forbidden}]
		\label{definition: b-balanced measure}
		A product measure $\nu= \bigotimes_{i = 1}^n \nu_i$ on $[m]^n$ is called $b$-balanced if 
		\begin{align*}
			\nu_i(\{x\}) \le \frac{b}{m}
		\end{align*}
		for any $x \in [m]$ and $i \in [n]$.
		\end{definition}

		One can easily verify the following claim.
		
		\begin{claim}
		\label{claim: balanced consistency}
			If a product measure $\nu(\cdot)$ on $[m_1]^n$ is $b_1$-balanced and a gluing $\pi: [m_1]^n \to [m_2]^n$ is $b_2$-balanced, then $\nu^{\pi}$ is $b_1 b_2$-balanced.
		\end{claim}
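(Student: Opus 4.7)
The plan is to verify the balancedness coordinate by coordinate, since both the measure and the gluing are product objects. Because $\nu^\pi$ is by construction the product measure $\bigotimes_{i=1}^n \nu_i^{\pi_i}$, where $\nu_i^{\pi_i}(\{y\}) = \nu_i(\pi_i^{-1}(y))$, it suffices to bound $\nu_i^{\pi_i}(\{y\})$ uniformly in $y \in [m_2]$ and $i \in [n]$.

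For a fixed $i \in [n]$ and $y \in [m_2]$, I would simply expand
\begin{align*}
\nu_i^{\pi_i}(\{y\}) = \sum_{x \in \pi_i^{-1}(y)} \nu_i(\{x\}) \le |\pi_i^{-1}(y)| \cdot \frac{b_1}{m_1} \le \frac{b_2 m_1}{m_2} \cdot \frac{b_1}{m_1} = \frac{b_1 b_2}{m_2},
\end{align*}
where the first inequality uses that $\nu$ is $b_1$-balanced (so each marginal $\nu_i$ satisfies $\nu_i(\{x\}) \le b_1/m_1$), and the second uses that $\pi$ is $b_2$-balanced (so each coordinate gluing $\pi_i$ satisfies $|\pi_i^{-1}(y)| \le b_2 m_1/m_2$). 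This is exactly the $b_1 b_2$-balanced condition from Definition 6.7, so the claim follows.

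There is essentially no obstacle here: the statement is a direct unpacking of the two definitions combined with the fact that pushforward under a product map of a product measure is the product of the coordinatewise pushforwards. The only thing one should be careful about is reading the definition of $\nu^\pi$ as the product $\bigotimes_{i=1}^n \nu_i^{\pi_i}$, which is immediate from the displayed formula $\nu^\pi(\{y\}) = \prod_{i=1}^n \nu_i(\pi_i^{-1}(y_i))$. No further tools are needed.
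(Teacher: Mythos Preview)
Your proof is correct and is exactly the direct verification the paper has in mind; the paper itself does not write out a proof and simply states that the claim is easy to verify. Your coordinatewise computation is the natural (and essentially only) way to do it.
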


        \subsubsection{Gluings and the measure boosting argument}

		Following~\cite{keevash2023forbidden}, for the measure boosting argument, we will construct $\widetilde{\mu}(\cdot)$ as a measure $\mu^{\widetilde{\pi}}(\cdot)$, where $\mu(\cdot)$  stands for the uniform measure on some product space $[m]^{n'}$, $n' \le n$, and $\widetilde{\pi}$ is some gluing. Let us first point out that any gluing does not decrease a measure of a family $\ff \subset [m]^n$.
		\begin{claim}[Claim 6.8~\cite{keevash2023forbidden}]
		\label{claim: measure consistency after a gluing}
		For any $\ff \subset [m_1]^n$, a gluing $\pi: [m_1]^n \to [m_2]^n$ and product measure $\nu$ on $[m_1]^n$, we have $\nu^\pi(\ff^\pi) \ge \nu(\ff)$.
		\end{claim}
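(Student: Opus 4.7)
The plan is to recognize that $\nu^\pi$ is nothing more than the pushforward of $\nu$ under $\pi$, and then use the elementary set-theoretic inclusion $\ff \subset \pi^{-1}(\ff^\pi)$ together with monotonicity of $\nu$.

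First, I would verify that $\nu^\pi(A) = \nu(\pi^{-1}(A))$ for every $A \subset [m_2]^n$. This is immediate on singletons: since $\nu = \bigotimes_{i=1}^n \nu_i$ is a product measure and $\pi = \bigotimes_{i=1}^n \pi_i$, the preimage factorises as $\pi^{-1}(\{y\}) = \pi_1^{-1}(y_1) \times \cdots \times \pi_n^{-1}(y_n)$, so
\[
\nu(\pi^{-1}(\{y\})) = \prod_{i=1}^n \nu_i(\pi_i^{-1}(y_i)) = \nu^\pi(\{y\})
\]
by definition. Summing over $y \in A$ (and using that preimages of disjoint singletons are disjoint) gives the general identity $\nu^\pi(A) = \nu(\pi^{-1}(A))$.

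Second, I would observe that for any $\ff \subset [m_1]^n$ we have $\ff \subset \pi^{-1}(\pi(\ff)) = \pi^{-1}(\ff^\pi)$, simply because every $x \in \ff$ satisfies $\pi(x) \in \ff^\pi$. Applying monotonicity of $\nu$ and the pushforward identity above,
\[
\nu^\pi(\ff^\pi) = \nu(\pi^{-1}(\ff^\pi)) \ge \nu(\ff),
\]
which is the desired inequality.

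There is no real obstacle here; the only thing to be careful about is the product structure, which is what makes $\nu^\pi$ agree with the pushforward measure on singletons. If $\nu$ were not a product measure this identification would fail, but in our setting it is built into the definition and the proof is immediate.
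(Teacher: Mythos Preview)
Your proof is correct. The paper does not supply its own proof of this claim; it simply cites it as Claim~6.8 of \cite{keevash2023forbidden}. Your argument---identifying $\nu^\pi$ with the pushforward $\nu\circ\pi^{-1}$ via the product structure and then applying $\ff\subset\pi^{-1}(\ff^\pi)$---is exactly the standard one-line justification, and there is nothing to add.
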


		Clearly, Claim~\ref{claim: measure consistency after a gluing} is insufficient for upgrading the measure of a family $\ff \subset [m]^n$. The following lemma shows how one can enlarge the measure of a family $\ff$ choosing a gluing $\pi$ via some averaging argument. The statement of the lemma requires some basic knowledge concerning Markov chains; here we provide some definitions.

		For a measure $\nu$ on $[m]^n$, define the scalar product $\langle \cdot, \cdot \rangle_{\nu}$ on the space $L_2([m]^n, \nu)$ as
		\begin{align*}
			\langle f, g \rangle_{\nu} = \EE_{\bx \sim \nu} f(\bx) g(\bx).
		\end{align*}

		Then, let $T$ be a Markov chain with the stationary measure $\nu$. We define a linear operator $T: L_2([m]^n, \nu) \to L_2([m]^n, \nu)$ as follows. Let $\by$ be a step of the Markov chain $T$ starting from a point $x$. Then, we have
		\begin{align*}
			(T f)(x) = \EE f(\by).
		\end{align*}
		Finally, for a Markov chain $T$ with the stationary measure $\nu$ define $\lambda_*(T)$ as follows
		\begin{align*}
			(1 - \lambda_*(T))^2 = \sup \{ \EE_{\bx \sim \nu} (T f(\bx))^2 \mid \EE_{\bx \sim \nu} f(\bx) = 0, \; \EE_{\bx \sim f} f^2(\bx) = 1 \}.
		\end{align*}

		Then, we are ready to state the lemma.

		\begin{lemma}
		\label{lemma: random gluing measure upgrade}
		Let $\nu(\cdot)$ be a product measure on $[m_1]^n$. Fix $m_2, b$ such that $m_2 \le m_1 \le b m_2$. Then, there exists a product Markov chain $T = \otimes_{i = 1}^n T_i$ with the stationary measure $\nu(\cdot)$, such that
		\begin{align}
		\label{eq: Markov chain lower bound}
			\EE_{\bpi \sim U(\Pi_{m_1, m_2, b}^{\otimes  n})}\nu^{\bpi}(\ff) \ge \frac{\nu^2(\ff)}{\langle \1_{\ff}, T (\1_{\ff}) \rangle_{\nu} },
		\end{align}
		where $U(\Pi_{m_1, m_2, b}^{\otimes n})$ stands for the uniform measure on $\Pi_{m_1, m_2, b}^{\otimes n}$, $\1_{\ff}$ stands for the characteristic function of $\ff$.
		Moreover, we have $\lambda_*(T) \ge 1/6$.
		\end{lemma}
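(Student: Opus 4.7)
The plan is to prove~\eqref{eq: Markov chain lower bound} via a Cauchy--Schwarz estimate for each fixed gluing, then to take the expectation using the convexity of $x\mapsto 1/x$, and finally to analyse the resulting Markov chain coordinate by coordinate.

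For a fixed gluing $\pi$, partition $\ff$ according to its image $z\in\ff^\pi$ and write
\begin{align*}
    \nu(\ff) = \sum_{z\in \ff^\pi} \nu(\pi^{-1}(z)\cap \ff) = \sum_{z\in \ff^\pi} \sqrt{\nu(\pi^{-1}(z))}\cdot\frac{\nu(\pi^{-1}(z)\cap \ff)}{\sqrt{\nu(\pi^{-1}(z))}}.
\end{align*}
Cauchy--Schwarz then gives
\begin{align*}
    \nu(\ff)^2 \le \nu^\pi(\ff^\pi)\cdot \sum_z \frac{\nu(\pi^{-1}(z)\cap \ff)^2}{\nu(\pi^{-1}(z))} = \nu^\pi(\ff^\pi)\cdot \langle \1_\ff, T_\pi \1_\ff\rangle_\nu,
\end{align*}
where $T_\pi$ is the Markov operator that sends $x$ to a sample $y\sim \nu$ conditioned on $\pi(y)=\pi(x)$; equivalently, $T_\pi$ is the $\nu$-orthogonal projection onto the subspace of functions constant on $\pi$-classes, so in particular $T_\pi$ is self-adjoint and idempotent.

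Averaging over $\bpi\sim U(\Pi_{m_1, m_2, b}^{\otimes n})$ and applying Jensen's inequality to the convex function $x\mapsto 1/x$,
\begin{align*}
    \EE_\bpi \nu^\bpi(\ff^\bpi) \ge \EE_\bpi \frac{\nu(\ff)^2}{\langle \1_\ff, T_\bpi \1_\ff\rangle_\nu} \ge \frac{\nu(\ff)^2}{\EE_\bpi \langle \1_\ff, T_\bpi \1_\ff\rangle_\nu} = \frac{\nu(\ff)^2}{\langle \1_\ff, T \1_\ff\rangle_\nu}, \qquad T := \EE_\bpi T_\bpi.
\end{align*}
Since both $\nu$ and the distribution of $\bpi$ factor over coordinates, $T$ decomposes as $\bigotimes_{i=1}^n T_i$, where $T_i$ is the corresponding average of projections on $L^2([m_1],\nu_i)$. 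This is a product Markov chain with stationary measure $\nu$, which establishes~\eqref{eq: Markov chain lower bound}.

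It remains to prove the spectral gap bound. Since the spectral gap of a product chain equals the minimum of the coordinate gaps, it suffices to show $\lambda_*(T_i)\ge 1/6$ for each $i$. Each $T_i$ is self-adjoint and positive semi-definite, being an average of orthogonal projections, so
\begin{align*}
    (1-\lambda_*(T_i))^2 = \sup \left\{ \EE_{\bpi_i} \|T_{\bpi_i} f\|_{\nu_i}^2 \;:\; \EE_{\nu_i} f = 0,\ \EE_{\nu_i} f^2 = 1 \right\}.
\end{align*}
Expanding $\|T_{\bpi_i} f\|_{\nu_i}^2$ as a sum of $f(x)f(y)$ over pairs with coefficients $\EE_{\bpi_i}[\1\{\bpi_i(x)=\bpi_i(y)\}/\nu_i(\bpi_i^{-1}(\bpi_i(x)))]$ reduces the task to bounding collision probabilities $\Pr_{\bpi_i}(\bpi_i(x)=\bpi_i(y))$ for $x\ne y$ together with the typical $\nu_i$-mass of a preimage class. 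I expect this step to be the main technical obstacle: one needs to exploit the $b$-balancedness assumption together with $m_2\le m_1\le bm_2$ to obtain sufficiently uniform estimates on these quantities, after which $(1-\lambda_*(T_i))^2\le (5/6)^2$ follows from an eigenvalue computation on an explicit $2\times 2$ matrix on the span of the constants and the optimizer $f$.
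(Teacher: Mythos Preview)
Your derivation of inequality~\eqref{eq: Markov chain lower bound} via Cauchy--Schwarz on each fibre of $\pi$ followed by Jensen for $x\mapsto 1/x$ is correct and is exactly the argument of Lemma~6.9 in~\cite{keevash2023forbidden}, to which the paper defers. The identification of $T=\EE_{\bpi}T_{\bpi}$ as a product Markov chain is also right.

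The spectral gap part, however, is not a proof but a sketch with a gap you yourself flag. Two concrete issues. First, a slip in the displayed identity: since each $T_{\bpi_i}$ is a projection, $\EE_{\bpi_i}\|T_{\bpi_i}f\|_{\nu_i}^2=\langle f,T_i f\rangle_{\nu_i}$, whose supremum over mean-zero unit $f$ is the second eigenvalue $\lambda_2(T_i)$, whereas by definition $(1-\lambda_*(T_i))^2=\lambda_2(T_i)^2$; so your formula is off by a square, and the target is simply $\langle f,T_i f\rangle_{\nu_i}\le 5/6$. Second, and more substantively, the ``collision probability plus $2\times 2$ matrix'' outline does not by itself handle a general (non-uniform) $\nu_i$: the distribution of $\bpi_i$ is symmetric in the elements of $[m_1]$, but $T_{\bpi_i}$ is the $\nu_i$-conditional expectation, so the averaged kernel $T_i$ is not of the form $\alpha I+\beta J$ and the reduction to a $2\times 2$ problem is not automatic. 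The argument in~\cite{keevash2023forbidden} instead bounds $\langle f,T_i f\rangle_{\nu_i}$ directly by controlling, for each block $\bpi_i^{-1}(z)$, the squared $\nu_i$-average of $f$ over that block, and uses that a uniformly random $b$-balanced partition has all blocks of size between $\lceil m_1/(bm_2)\rceil$ and $\lfloor bm_1/m_2\rfloor$; the constraint $m_1\le bm_2$ ensures every block has at least two elements, which is what drives the constant $5/6$. Your write-up would need to supply this computation explicitly rather than defer it.
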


		For the proof, we refer the reader to that of Lemma 6.9~\cite{keevash2023forbidden}. 

		It will be convenient to upper bound the scalar product in the right-hand side of~\eqref{eq: Markov chain lower bound}, replacing the Markov chain operator $T$ with some operator defined as follows. Given a vector $x \in [m]^n$ and a product measure $\nu = \bigotimes_{i = 1}^n \nu_i$, define a distribution $N_{\rho}(x) = \otimes_{i = 1}^n N_{\rho}(x_i)$, where $N_\rho(x_i)$ is a distribution of the following random variable $\by_i$:
		\begin{align*}
			\by_i = \begin{cases}
				x_i, & \text{ with probability } \rho, \\
				\text{sample from } \nu_i(\cdot), & \text{ with probability } 1 - \rho.
			\end{cases}
		\end{align*}
		Then, we define an operator $T_{\rho}: L_2([m]^n, \nu) \to L_2([m]^n, \nu)$ by the averaging a function $f$ over $\by$:
		\begin{align*}
			(T_\rho f)(x) = \EE_{\by \sim N_\rho(x)} f(\by).
		\end{align*}
        We also introduce a quantity that is closely related to the noise operator. Namely, we define the \textit{stability} of a function $f$ or a family $\ff$ as
        \begin{align*}
			\Stab_{\rho}(f) = \langle f, T_\rho f \rangle_{\nu} \quad \text{and} \quad \Stab_{\rho}(\ff) = \langle \1_{\ff}, T_\rho \1_{\ff} \rangle_{\nu}.
		\end{align*}
        
		Then, we upper bound $\langle f, T f \rangle_{\nu}$ by the following lemma.

		\begin{lemma}[Lemma 5.13\cite{keevash2023forbidden}]
		\label{lemma: stability upper bound}
		Suppose that a product Markov chain $T$ has $\lambda_*(T) \ge 1/6$. Then, we have
		\begin{align*}
			\langle f, T f \rangle_{\nu} \le \Stab_{5/6}(f).
		\end{align*}
		\end{lemma}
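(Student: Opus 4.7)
The plan is to run an Efron--Stein style orthogonal decomposition of $L_2([m_1]^n,\nu)$ so that both $T$ and $T_{5/6}$ become block-diagonal with respect to it, and then to compare them level by level. For each coordinate $i$, split $L_2([m_1],\nu_i) = \RR \oplus H_i$ where $H_i$ is the subspace of $\nu_i$-mean-zero functions; this gives $L_2([m_1]^n,\nu) = \bigoplus_{S \subset [n]} V_S$, where $V_S$ is the subspace of functions that depend only on $\{x_i : i \in S\}$ and are $\nu_i$-mean-zero in every variable $i \in S$. Writing $f = \sum_S f_S$ with $f_S \in V_S$, the pieces are pairwise $\nu$-orthogonal. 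Since $T = \bigotimes_i T_i$ and every $T_i$ fixes constants and preserves $H_i$ (by stationarity of $\nu_i$), each $V_S$ is $T$-invariant; the same is trivially true for $T_{5/6}$. On $V_S$ the noise operator reduces to multiplication by $(5/6)^{|S|}$, because $T_{5/6}^{(i)}$ acts on $H_i$ as the scalar $5/6$; hence $\Stab_{5/6}(f) = \sum_S (5/6)^{|S|}\|f_S\|_\nu^2$.

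The heart of the argument is the matching per-level bound $\|T|_{V_S}\|_{\mathrm{op}} \le (5/6)^{|S|}$. First I would extract a single-coordinate bound from the joint hypothesis: given $g \in H_i$ with $\|g\|_{\nu_i} = 1$, lift it to $f(x) = g(x_i)$, which is $\nu$-mean-zero and of unit norm and satisfies $(Tf)(x) = (T_i g)(x_i)$; the hypothesis $\lambda_*(T) \ge 1/6$ then gives $\|T_i g\|_{\nu_i} = \|Tf\|_\nu \le 5/6$, so $\|T_i|_{H_i}\|_{\mathrm{op}} \le 5/6$. Under the natural identification $V_S \cong \bigotimes_{i \in S} H_i$, the restriction $T|_{V_S}$ is the tensor product $\bigotimes_{i \in S} T_i|_{H_i}$, and multiplicativity of the operator norm on tensor products yields $\|T|_{V_S}\|_{\mathrm{op}} \le \prod_{i \in S} \|T_i|_{H_i}\|_{\mathrm{op}} \le (5/6)^{|S|}$.

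To assemble, note that cross terms $\langle f_S, T f_{S'}\rangle_\nu$ vanish for $S \neq S'$ since $T$ preserves each $V_S$ and distinct $V_S$'s are $\nu$-orthogonal, whence
\begin{equation*}
\langle f, T f \rangle_\nu = \sum_S \langle f_S, T f_S \rangle_\nu \le \sum_S \|T|_{V_S}\|_{\mathrm{op}}\,\|f_S\|_\nu^2 \le \sum_S (5/6)^{|S|}\|f_S\|_\nu^2 = \Stab_{5/6}(f).
\end{equation*}
The middle inequality deserves a remark: if $T|_{V_S}$ is not self-adjoint, replace it by $\tfrac12(T + T^*)|_{V_S}$, which preserves the quadratic form $\langle f_S, \cdot \, f_S\rangle_\nu$ and has operator norm at most $\|T|_{V_S}\|_{\mathrm{op}}$, so no reversibility of the chain itself is required. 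The main obstacle is the extraction step: the hypothesis $\lambda_*(T) \ge 1/6$ only controls the joint operator on the product space, and one has to recognise that testing it against functions of a single coordinate, combined with the product structure of $T$, upgrades it to the per-coordinate spectral bound $\|T_i|_{H_i}\|_{\mathrm{op}} \le 5/6$ that the Efron--Stein decomposition consumes level by level.
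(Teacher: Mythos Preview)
Your argument is correct and is essentially the standard proof: the paper does not prove this lemma but cites it verbatim as Lemma~5.13 of \cite{keevash2023forbidden}, where the same Efron--Stein decomposition is used to reduce the comparison to the per-coordinate bound $\|T_i|_{H_i}\|_{\mathrm{op}}\le 5/6$ and then tensorise. One minor remark: the symmetrisation aside is unnecessary, since $\langle f_S, T f_S\rangle_\nu \le \|f_S\|_\nu\,\|T f_S\|_\nu \le \|T|_{V_S}\|_{\mathrm{op}}\,\|f_S\|_\nu^2$ already follows from Cauchy--Schwarz without any self-adjointness of $T$.
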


		Lemma~\ref{lemma: stability upper bound} and Lemma~\ref{lemma: random gluing measure upgrade} yield the following corollary.
		\begin{corollary}
		\label{corollary: stab measure upgrade}
		Fix numbers $n, m_1, m_2, b$ such that $m_2 \le m_1 \le b m_2$. Then, for a function $f \in L_2([m_1]^n, \nu)$, a family $\ff$ and a product measure $\nu$, there exists a gluing $\pi\in \Pi_{m_1, m_2, b}^{\otimes n}$ such that
		\begin{align*}
			\nu^{\pi} (\ff^{\pi}) \ge \frac{\nu^2(\ff)}{\Stab_{5/6}(\ff)}.
		\end{align*}
		\end{corollary}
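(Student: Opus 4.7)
The plan is to combine the two preceding results, Lemma~\ref{lemma: random gluing measure upgrade} and Lemma~\ref{lemma: stability upper bound}, via a standard probabilistic averaging (pigeonhole) argument. The parameters $n, m_1, m_2, b$ satisfy exactly the hypotheses of Lemma~\ref{lemma: random gluing measure upgrade}, so there is no compatibility check to worry about.

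First, I invoke Lemma~\ref{lemma: random gluing measure upgrade} applied to the product measure $\nu$ on $[m_1]^n$ and the family $\ff$. This produces a product Markov chain $T = \bigotimes_{i=1}^n T_i$ on $[m_1]^n$ with stationary measure $\nu$ and $\lambda_*(T) \ge 1/6$, satisfying
\begin{align*}
    \EE_{\bpi \sim U(\Pi_{m_1, m_2, b}^{\otimes n})} \nu^{\bpi}(\ff^{\bpi}) \;\ge\; \frac{\nu^2(\ff)}{\langle \1_{\ff}, T \1_{\ff} \rangle_{\nu}}.
\end{align*}

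Next, I apply Lemma~\ref{lemma: stability upper bound} with $f = \1_{\ff}$: the hypothesis $\lambda_*(T) \ge 1/6$ is precisely what is needed, and the conclusion reads
\begin{align*}
    \langle \1_{\ff}, T \1_{\ff} \rangle_{\nu} \;\le\; \Stab_{5/6}(\ff).
\end{align*}
Substituting this into the denominator of the previous display yields
\begin{align*}
    \EE_{\bpi \sim U(\Pi_{m_1, m_2, b}^{\otimes n})} \nu^{\bpi}(\ff^{\bpi}) \;\ge\; \frac{\nu^2(\ff)}{\Stab_{5/6}(\ff)}.
\end{align*}
Since the expectation of $\nu^{\bpi}(\ff^{\bpi})$ over uniformly random $\bpi \in \Pi_{m_1, m_2, b}^{\otimes n}$ is at least the right-hand side, by averaging there exists at least one gluing $\pi \in \Pi_{m_1, m_2, b}^{\otimes n}$ for which $\nu^{\pi}(\ff^{\pi}) \ge \nu^2(\ff) / \Stab_{5/6}(\ff)$. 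This is exactly the claim of the corollary.

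There is no substantive obstacle here: the corollary is a direct concatenation of the two lemmas via pigeonhole, and the auxiliary function $f$ in the statement is not used in the displayed inequality (the stability bound is only needed at $\1_{\ff}$). The only thing to verify carefully is that the notation $\nu^{\bpi}(\ff)$ in Lemma~\ref{lemma: random gluing measure upgrade} is to be read as $\nu^{\bpi}(\ff^{\bpi})$, consistently with Claim~\ref{claim: measure consistency after a gluing}; with this identification everything lines up.
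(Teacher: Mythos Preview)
Your proof is correct and is exactly the argument the paper intends: the corollary is stated in the paper as an immediate consequence of Lemma~\ref{lemma: random gluing measure upgrade} and Lemma~\ref{lemma: stability upper bound}, and you have spelled out the averaging step that converts the expectation bound into the existence of a single gluing. Your remark about reading $\nu^{\bpi}(\ff)$ in Lemma~\ref{lemma: random gluing measure upgrade} as $\nu^{\bpi}(\ff^{\bpi})$ is also correct and consistent with the surrounding text.
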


		Due to Corollary~\ref{corollary: stab measure upgrade}, for the measure boosting argument, we need to control $\Stab_{5/6}(\ff)$. The following proposition ensures that it is enough to upper bound $\Stab_{\rho_0}(\ff)$ for sufficiently small $\rho_0$.

		\begin{proposition}[Lemma 5.11~\cite{keevash2023forbidden}]
		\label{proposition: smaller rho stability upper bound}
		For any function $f \in L_2([m]^n, \nu)$, we have
		\begin{align*}
			\Stab_{\rho}(f) \le \Vert f \Vert_2^{2(1 - 1/t)} \Stab_{\rho^t}(f)^{1/t}
		\end{align*}
		whenever $t = 2^d$ with $d \in \N$.
		\end{proposition}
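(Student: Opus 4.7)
The plan is to reduce everything to two elementary properties of the noise operator $T_\rho$ on the product space $([m]^n, \nu)$: it is self-adjoint with respect to $\langle\cdot,\cdot\rangle_\nu$, and it obeys the semigroup identity $T_\rho T_\sigma = T_{\rho\sigma}$. The first follows from the observation that the joint law of $(\bx, \by)$ with $\bx \sim \nu$ and $\by \sim N_\rho(\bx)$ is symmetric on each coordinate: with probability $\rho$ one draws a diagonal pair $(a, a)$ via $a \sim \nu_i$, and with probability $1 - \rho$ one draws an independent pair from $\nu_i \otimes \nu_i$. The second follows coordinate-wise from the identity $T_\rho = \rho I + (1 - \rho) E$, where $E$ is the mean-projection onto constants and $E^2 = E$, so that $(\rho I + (1-\rho)E)(\sigma I + (1-\sigma)E) = \rho\sigma I + (1 - \rho\sigma) E$.

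With these two ingredients, the base case $t = 2$ is a single Cauchy--Schwarz:
\begin{align*}
    \Stab_\rho(f) = \langle f, T_\rho f\rangle_\nu \le \Vert f \Vert_2 \cdot \Vert T_\rho f \Vert_2 = \Vert f \Vert_2 \cdot \langle f, T_\rho^2 f\rangle_\nu^{1/2} = \Vert f \Vert_2 \cdot \Stab_{\rho^2}(f)^{1/2},
\end{align*}
where self-adjointness is used to convert $\Vert T_\rho f \Vert_2^2$ into $\langle f, T_\rho^2 f\rangle_\nu$, and the semigroup identity to rewrite $T_\rho^2 = T_{\rho^2}$. Squaring gives $\Stab_\rho(f)^2 \le \Vert f \Vert_2^2\, \Stab_{\rho^2}(f)$, which is the desired inequality for $t = 2$.

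For general $t = 2^d$, I would induct on $d$. Applying the base case at noise parameter $\rho^{2^j}$ for $j = 0, 1, \ldots, d - 1$ and multiplying the resulting inequalities telescopically, one obtains
\begin{align*}
    \Stab_\rho(f)^{2^d} \le \Vert f \Vert_2^{2(2^d - 1)}\, \Stab_{\rho^{2^d}}(f),
\end{align*}
the exponent $2(2^d - 1)$ arising as the geometric sum $2 + 4 + \cdots + 2^d$. Taking the $2^d$-th root then yields the claim with exponent $2(1 - 1/t)$ on $\Vert f \Vert_2$ and $1/t$ on $\Stab_{\rho^t}(f)$. I do not anticipate any serious obstacle here: the whole argument is a standard iterated Cauchy--Schwarz, and the only point worth verifying carefully is self-adjointness of $T_\rho$ with respect to the specific product measure $\nu$ against which the resampling is defined, which holds by construction.
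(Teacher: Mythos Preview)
Your argument is correct. Self-adjointness of $T_\rho$ and the semigroup identity $T_\rho T_\sigma = T_{\rho\sigma}$ hold for exactly the reasons you give, the base case is a clean Cauchy--Schwarz, and the telescoping induction is straightforward.

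The paper itself does not prove this proposition; it is quoted directly as Lemma~5.11 of~\cite{keevash2023forbidden}. That said, the paper does invoke the Efron--Stein decomposition a few lines later (to argue monotonicity of $\Stab_\rho$ in $\rho$), and that decomposition gives the more standard proof: writing $\Stab_\rho(f) = \sum_{S \subset [n]} \rho^{|S|}\,\Vert f^{=S}\Vert_2^2$ and $\Vert f\Vert_2^2 = \sum_S \Vert f^{=S}\Vert_2^2$, the claimed inequality is simply H\"older with exponents $t$ and $t/(t-1)$ applied to the nonnegative sequence $\Vert f^{=S}\Vert_2^2$. That route is shorter and yields the inequality for \emph{every} real $t \ge 1$, not just $t = 2^d$. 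Your operator-theoretic proof, by contrast, avoids the Efron--Stein machinery entirely and relies only on self-adjointness and the semigroup law, which makes it more self-contained; the price is the restriction to dyadic $t$, which is all that the statement asks for anyway.
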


		To control $\Stab_{\rho_0}(\ff)$, one need to guarantee some pseudorandomness of $\ff$. Authors of the original paper~\cite{keevash2023forbidden} used the notion of globalness due to Keevash et al.~\cite{keevash2021global}. Instead, we use the concept of globalness from~\cite{keller_sharp_2023}.

\begin{theorem}[Theorem 1.4 from~\cite{keller_sharp_2023}]
\label{theorem: hypercontractivity}
Say that a function $f$ on a product space $\prod_{i = 1}^n \Omega_i$ endowed with a product measure $\nu = \otimes_{i = 1}^n \nu_i$ is $r$-global if for any $S \subset [n]$ and any $x \in \prod_{i \in S} \Omega_i$, we have
\begin{align*}
	\Vert f_{S \to x} \Vert_2 \le r^{|S|} \Vert f \Vert_2,
\end{align*}
where $f_{S \to x} : \prod_{i \in [n] \setminus S} \Omega_i \to \RR$ stands for the function $f_{S \to x}(y) = f(x, y)$. 
Let $q \ge 2$. If $r \ge 1$ and $\rho \le \frac{\ln q}{32 r q}$, then $\Vert T_\rho f\Vert_q \le \Vert f \Vert_2$ for any $r$-global function $f$.
\end{theorem}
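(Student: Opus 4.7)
The plan is to combine an Efron--Stein/Hoeffding level decomposition with an induction on $n$, following the standard template for product-space hypercontractive inequalities. Write $f = \sum_{S \subseteq [n]} f^{=S}$ where $f^{=S}$ depends only on the coordinates in $S$ and has zero conditional expectation given any proper subset, so that $T_\rho$ acts diagonally as multiplication by $\rho^{|S|}$ on each summand. Grouping by degree $d = |S|$ and applying Minkowski in $L^q(\nu)$ gives $\Vert T_\rho f \Vert_q \le \sum_d \rho^d \Vert f^{=d} \Vert_q$, so the main task reduces to a \emph{level-$d$ inequality} of the shape $\Vert f^{=d} \Vert_q \le (C r q)^{d/2} \Vert f^{=d} \Vert_2$ for $r$-global $f$. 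Combined with a Cauchy--Schwarz step against the Parseval identity $\sum_d \Vert f^{=d} \Vert_2^2 = \Vert f \Vert_2^2$, the choice $\rho \le \frac{\ln q}{32 r q}$ makes the resulting geometric sum at most $1$ and delivers $\Vert T_\rho f \Vert_q \le \Vert f \Vert_2$.

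To prove the level-$d$ inequality I would induct on $n$. The base case $n = 1$ follows directly from the definition of $r$-globalness: for any $g \in L^2(\Omega_1, \nu_1)$, the condition with $S = \{1\}$ gives $|g(x)| = \Vert g_{1 \to x} \Vert_2 \le r \Vert g \Vert_2$ at every atom $x$, so interpolating between $L^\infty$ and $L^2$ yields $\Vert g \Vert_q \le r^{1 - 2/q} \Vert g \Vert_2$. For the inductive step, split $f^{=d}$ according to whether the last coordinate belongs to $S$, condition on $y_n$, and apply the induction hypothesis to the remaining $n-1$ coordinates. The key observation is that both the averaged part and the mean-zero part inherit $r$-globalness (the restrictions of $f$ are themselves $r$-global by hypothesis), so the induction closes with the correct exponents.

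The main obstacle is preserving the exact dependence $(Crq)^{d/2}$ through the induction, particularly when the single-coordinate measures $\nu_i$ are highly non-uniform. A naive argument would pick up factors like $1/\min_{i,x} \nu_i(x)$, which is precisely what one cannot afford for the applications in this paper, where $m$ is only polynomial in $t$. This is where $r$-globalness is essential: it provides exactly the $L^2 \to L^\infty$ control on restrictions that compensates for the potentially degenerate atoms, turning what would otherwise be a very weak log-Sobolev constant into one depending only on $r$. The cleanest implementation is arguably via a modified log-Sobolev inequality for the semigroup generated by $T_\rho$, restricted to the class of $r$-global functions; such an inequality tensorizes automatically by the product structure and integrates to the claimed hypercontractive bound in the Gross framework, the logarithmic factor $\ln q$ in the threshold on $\rho$ being the unmistakable signature of a log-Sobolev route rather than a direct Bonami-style level inequality.
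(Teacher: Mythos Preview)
The paper does not prove this theorem; it is quoted verbatim as Theorem~1.4 of \cite{keller_sharp_2023} and used as a black box. The only comment the paper adds is the sentence immediately following the statement: the proof in \cite{keller_sharp_2023} is written for identical factors $\Omega_i=\Omega$, $\nu_i=\nu$, but ``works for the product space since it is based on the freezing argument and induction.'' There is no further argument in the present paper to compare your sketch against.

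That said, your outline is broadly compatible with ``freezing and induction'': the inductive step you describe --- condition on the last coordinate, use that the restrictions $f_{n\to x}$ inherit $r$-globalness, and apply the hypothesis on $n-1$ coordinates --- is exactly what freezing means. Where your sketch is loose is the detour through a standalone level-$d$ inequality $\Vert f^{=d}\Vert_q\le (Crq)^{d/2}\Vert f^{=d}\Vert_2$. First, $r$-globalness of $f$ does not automatically pass to the Efron--Stein projections $f^{=d}$, so the inequality as you state it needs its own justification. Second, even granting it, Minkowski followed by Cauchy--Schwarz against Parseval produces a prefactor $\bigl(\sum_{d\ge 0}(\rho^{2}Crq)^{d}\bigr)^{1/2}$, which is strictly larger than $1$ for every $\rho>0$, so you never land exactly on $\Vert f\Vert_2$. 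The proof in \cite{keller_sharp_2023} avoids this by running the induction directly on the hypercontractive inequality $\Vert T_\rho f\Vert_q\le\Vert f\Vert_2$ itself, without the triangle-inequality loss. Your closing log-Sobolev paragraph is a reasonable heuristic for the appearance of $\ln q$, but it is not the route taken in the cited reference either.
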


In the paper~\cite{keller_sharp_2023}, the proof of Theorem~\ref{theorem: hypercontractivity} is presented for the case when all $\Omega_i$ and $\nu_i$ coincide. However, it works for the product space since it based on the freezing argument and induction.

We say that a family $\ff$ is $\tau$-homogeneous if $\1_{\ff}$ is $\sqrt{\tau}$-global. This yields the following definition.

\begin{definition}
\label{definition: homgeneous families}
Say that $\ff \subset [m]^n$ is $\tau$-homogeneous w.r.t. to a product measure $\nu$ if for any $S$ and $x \in [m]^S$ we have
\begin{align*}
	\nu_{S \to x}(\ff(S \to x)) \le \tau^{|S|} \nu(\ff).
\end{align*} 
\end{definition}

We discuss $\tau$-homogeneous families in more details in the subsequent section. For now, let us present how to bound $\Stab_{\rho_0}(\ff)$ for $\tau$-homogeneous family $\ff$. By the H\"{o}lder inequality, we have
\begin{align*}
	\Stab_{\rho_0}(\ff) \le \Vert T_{\rho_0} \1_{\ff} \Vert_4 \Vert \1_{\ff} \Vert_{4/3}.
\end{align*}
If $\rho_0$ satisfies the conditions of Theorem~\ref{theorem: hypercontractivity} for $q = 4$, we have
\begin{align*}
	\Stab_{\rho_0}(\ff) \le \Vert \1_{\ff} \Vert_2 \Vert \1_{\ff} \Vert_{4/3} = \nu^{5/4}(\ff).
\end{align*}
It turns out, that this bound is sufficient for our purposes, see the proof of Lemma~\ref{lemma: boosting measure step}.

Given two homogeneous families $\ff_1, \ff_2$, suppose for a while that, iteratively using Corollary~\ref{corollary: stab measure upgrade}, we found families $\widetilde{\ff}_1, \widetilde{\ff}_2 \subset [m']^{n'}$ such that $\widetilde{\mu}(\widetilde{\ff}_1), \widetilde{\mu}(\widetilde{\ff}_2)$ is larger than some constant and $\widetilde{\mu}$ is an enough balanced product measure. Then, the following lemma allows us to find a cross-disagreement in families $\widetilde{\ff}_1, \widetilde{\ff}_2$.

\begin{lemma}[Lemma 5.9~\cite{keevash2023forbidden}]
\label{lemma: hoffman bound}
Let $\nu = \bigotimes_{i = 1}^n \nu_i$ be a product probability measure on $[m]^n$ such that $\nu_i(x) \le \lambda \le 1/2$ for all $i \in [n]$ and $x \in [m]$. Suppose that $\cG_1, \cG_2 \subset [m]^n$ are cross-intersecting with $\nu(\cG_i) = \alpha_i$ for $i = 1, 2$. Then
\begin{align*}
    \alpha_1 \alpha_2 \le \left ( \frac{\lambda}{1 - \lambda} \right )^2 (1 - \alpha_1) (1 - \alpha_2).
\end{align*}
\end{lemma}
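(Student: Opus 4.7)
The plan is to prove this via a Hoffman-type spectral bound applied to the tensor-product ``disagreement'' operator on $L_2([m]^n, \nu)$. Let $T$ be the self-adjoint operator with kernel $T(x,y) = \prod_{i=1}^n \1[x_i \ne y_i]$; the cross-intersecting hypothesis is exactly $\langle \1_{\cG_1}, T \1_{\cG_2}\rangle_\nu = 0$, while the product structure of $\nu$ gives a tensor decomposition $T = \bigotimes_{i=1}^n T_i$ with single-coordinate operators $(T_i f)(x) = \EE_{\nu_i} f - \nu_i(x) f(x)$.

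The key spectral calculation concerns each $T_i$. Direct analysis of the eigenequation $T_i f = \mu f$ shows that eigenfunctions come in two families. Those with $\EE_{\nu_i} f \ne 0$ satisfy $f(x) \propto 1/(\mu + \nu_i(x))$ with $\mu$ a root of $g(\mu) := \sum_y \nu_i(y)/(\mu + \nu_i(y)) = 1$; tracking the sign changes of $g$ between its poles $\{-\nu_i(y)\}$ shows there is a unique positive root $\mu_1^{(i)}$ while every other root lies in $(-\lambda, 0)$. Those with $\EE_{\nu_i} f = 0$ have $\mu = -\nu_i(x)$ on the support of $f$, giving $|\mu| \le \lambda$. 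Moreover, the inequality
\[
g(1 - \lambda) \;=\; \sum_y \frac{\nu_i(y)}{1 - \lambda + \nu_i(y)} \;\ge\; \sum_y \nu_i(y) \;=\; 1,
\]
which uses $\nu_i(y) \le \lambda$ to bound each denominator by $1$, combined with the strict monotonicity of $g$ on the rightmost interval, forces $\mu_1^{(i)} \ge 1 - \lambda$. Hence the spectral ratio of $T_i$ is at most $\lambda/(1-\lambda)$, and tensoring yields that $T$ has a principal eigenvalue $\mu_1 = \prod_i \mu_1^{(i)}$ with a strictly positive product eigenvector $\phi = \bigotimes_i \phi_i$ and operator norm at most $(\lambda/(1-\lambda))\mu_1$ on $\phi^\perp$.

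The final step is a standard Hoffman expansion. Decompose $\1_{\cG_j} = c_j \phi + h_j$ orthogonally in $L_2(\nu)$, so that $\alpha_j = c_j^2 \|\phi\|_\nu^2 + \|h_j\|_\nu^2$. Using self-adjointness of $T$ together with $T\phi = \mu_1 \phi$, the identity $0 = \langle \1_{\cG_1}, T \1_{\cG_2}\rangle_\nu$ collapses to $c_1 c_2 \mu_1 \|\phi\|_\nu^2 = -\langle h_1, T h_2\rangle_\nu$, and the spectral bound on $\phi^\perp$ then gives $|c_1 c_2|\,\|\phi\|_\nu^2 \le (\lambda/(1-\lambda))\,\|h_1\|_\nu\|h_2\|_\nu$; squaring and substituting $\|h_j\|_\nu^2 = \alpha_j - c_j^2\|\phi\|_\nu^2$ yields the desired inequality.

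The main obstacle I expect is the last algebraic reduction. In the uniform-measure case $\phi \equiv 1$ and the identifications $c_j = \alpha_j$, $\|h_j\|_\nu^2 = \alpha_j(1-\alpha_j)$ are immediate, so the bound $\alpha_1 \alpha_2 \le (\lambda/(1-\lambda))^2(1-\alpha_1)(1-\alpha_2)$ drops out in one line. For general $\nu_i$, however, the top eigenvector has the nontrivial profile $\phi_i(x) = 1/(\mu_1^{(i)} + \nu_i(x))$, pointwise bounded in $[1/(1+\lambda), 1/(1-\lambda)]$, and one must carefully translate between $\phi$-weighted and $\nu$-weighted moments. A slicker alternative, which I would attempt first, is to conjugate $T$ by multiplication by $\phi$ to obtain a Markov operator with constant top eigenvector on the tilted measure $\pi(x) \propto \nu(x)\phi(x)^2$, and then apply the expander mixing lemma on this tilted probability space.
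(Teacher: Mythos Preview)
The paper does not prove this lemma; it is quoted verbatim from Keevash, Lifshitz, Long and Minzer and invoked as a black box, so there is no ``paper's own proof'' to compare against. I can therefore only comment on the internal soundness of your sketch.

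Your spectral set-up is correct: the operator $T=\bigotimes_i T_i$ with $(T_if)(x)=\EE_{\nu_i}f-\nu_i(x)f(x)$ is self-adjoint on $L_2(\nu)$, the cross-intersecting hypothesis is exactly $\langle\1_{\cG_1},T\1_{\cG_2}\rangle_\nu=0$, and your eigenvalue analysis of each $T_i$ (unique positive root $\mu_1^{(i)}\ge 1-\lambda$, all other eigenvalues in $[-\lambda,0)$) is right, hence $\lVert T|_{\phi^\perp}\rVert\le\frac{\lambda}{1-\lambda}\mu_1$ for the Perron eigenpair $(\mu_1,\phi)$.

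The gap is exactly where you flag it, and it is a real one. Writing $\1_{\cG_j}=c_j\phi+h_j$ and squaring gives
\[
(c_1^2\lVert\phi\rVert^2)(c_2^2\lVert\phi\rVert^2)\ \le\ \Bigl(\tfrac{\lambda}{1-\lambda}\Bigr)^{2}\bigl(\alpha_1-c_1^2\lVert\phi\rVert^2\bigr)\bigl(\alpha_2-c_2^2\lVert\phi\rVert^2\bigr),
\]
which is \emph{not} the desired inequality: setting $\beta_j=c_j^2\lVert\phi\rVert^2$ you obtain $\beta_1\beta_2\le\rho^2(\alpha_1-\beta_1)(\alpha_2-\beta_2)$, and there is no general implication from this to $\alpha_1\alpha_2\le\rho^2(1-\alpha_1)(1-\alpha_2)$ (indeed $\beta_j\le\alpha_j$ by Cauchy--Schwarz, which goes the wrong way). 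Your proposed ``slicker alternative''---conjugating to the reversible Markov chain with stationary law $\pi(x)\propto\nu(x)\phi(x)^2$ and applying the expander mixing lemma there---runs into the same wall: it delivers $\pi(\cG_1)\pi(\cG_2)\le\rho^2(1-\pi(\cG_1))(1-\pi(\cG_2))$, i.e.\ the correct \emph{form} but in the tilted measure $\pi$, not in $\nu$. Since $\max_x\phi(x)/\min_x\phi(x)$ can be as large as $\bigl(\tfrac{1+\lambda}{1-\lambda}\bigr)^n$, a crude pointwise comparison between $\pi$ and $\nu$ does not recover the statement without loss. So as written the argument proves the Hoffman bound in the Perron-weighted measure, which is the standard outcome for irregular graphs, but not the $\nu$-measure statement the lemma asserts; closing that gap needs an additional idea beyond what you have sketched.
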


There is an alternative measure boosting argument that performs well if one of the initial measures $\mu(\ff_1), \mu(\ff_2)$ is large. We present it as the following lemma. While its statement can not be found in paper~\cite{keevash2023forbidden}, the proof coincides with that of Theorem~2.3~\cite{keevash2023forbidden}. For the sake of completeness, we provide the proof in the appendix.

\begin{lemma}
    \label{lemma: unbalanced cross matching}
    Let $\mathcal{F}_1, \mathcal{F}_2 \subset [m]^{n}$ and let $\mu$ be uniform measure on $[m]^n$. If $\mu(\mathcal{F}_1) + \mu(\mathcal{F}_2)^{\log_{m}(2)} > 1$, then there exist $x_i \in \mathcal{F}_i$ such that $\agr(x_1, x_2) = 0$.
\end{lemma}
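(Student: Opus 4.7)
The plan is to reduce the lemma to an isoperimetric-type inequality for a single family and then attack that inequality by induction on $n$. For $\mathcal{F} \subset [m]^n$, define the \emph{disagreement neighbourhood}
\[
    D(\mathcal{F}) = \{x \in [m]^n \mid \exists\, y \in \mathcal{F},\ \agr(x, y) = 0\}.
\]
A cross-disagreement between $\mathcal{F}_1$ and $\mathcal{F}_2$ exists if and only if $\mathcal{F}_1 \cap D(\mathcal{F}_2) \neq \varnothing$, and by the trivial union bound this is guaranteed whenever $\mu(\mathcal{F}_1) + \mu(D(\mathcal{F}_2)) > 1$. Hence the lemma will follow once I prove
\[
    \mu(D(\mathcal{F})) \geq \mu(\mathcal{F})^{\log_m 2}
    \qquad \text{for every } \mathcal{F} \subset [m]^n,
\]
since then the hypothesis of the lemma applied to $\mathcal{F}_2$ immediately yields $\mu(\mathcal{F}_1) + \mu(D(\mathcal{F}_2)) > 1$.

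Write $\lambda = \log_m 2$, so that $m^\lambda = 2$. I would establish the displayed inequality by induction on $n$. The base case $n = 1$ is immediate: the only non-trivial situation is $|\mathcal{F}| = 1$, where $\mu(D(\mathcal{F})) = (m-1)/m \geq 1/2 = \mu(\mathcal{F})^\lambda$. For the inductive step, slice by the last coordinate: set $\mathcal{F}^{(i)} = \{x' \in [m]^{n-1} \mid (x', i) \in \mathcal{F}\}$, put $c_i = \mu_{n-1}(\mathcal{F}^{(i)})$, and reorder the indices so that $c_1 \geq c_2 \geq \cdots \geq c_m$. With $\mathcal{G}_j = \bigcup_{i \neq j} \mathcal{F}^{(i)}$, one directly checks $D(\mathcal{F}) \cap ([m]^{n-1} \times \{j\}) = D(\mathcal{G}_j) \times \{j\}$. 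The inductive hypothesis applied to $\mathcal{G}_j$, combined with the trivial lower bound $\mu_{n-1}(\mathcal{G}_j) \geq \max_{i \neq j} c_i$, yields
\[
    \mu(D(\mathcal{F})) = \frac{1}{m} \sum_{j=1}^{m} \mu_{n-1}(D(\mathcal{G}_j))
    \geq \frac{1}{m} \bigl( c_2^\lambda + (m-1) c_1^\lambda \bigr).
\]
Since $\mu(\mathcal{F}) \leq (c_1 + (m-1) c_2)/m$, normalising $c_1 = 1$ and setting $t = c_2 \in [0, 1]$ (and using $m^\lambda = 2$ to clear denominators) reduces the inductive step to the one-variable inequality
\[
    \phi(t) := \frac{2(t^\lambda + m - 1)}{m} - (1 + (m-1) t)^\lambda \geq 0
    \qquad \text{for all } t \in [0, 1].
\]

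The main obstacle is verifying $\phi \geq 0$. I would first compute the boundary values $\phi(0) = (m-2)/m \geq 0$ and $\phi(1) = 2 - m^\lambda = 0$, and then analyse
\[
    \phi'(t) = \lambda \left[ \frac{2}{m} t^{\lambda - 1} - (m-1) (1 + (m-1) t)^{\lambda - 1} \right].
\]
Since $\lambda < 1$ for $m \geq 3$, we have $\phi'(0^+) = +\infty$ while $\phi'(1) = 2\lambda(2 - m)/m \leq 0$. Rearranging the equation $\phi'(t) = 0$ into $(1/t + m - 1)^{1 - \lambda} = m(m-1)/2$ shows that the left-hand side is strictly monotone in $t$, so $\phi'$ has a \emph{unique} zero $t^\ast \in (0, 1)$; consequently $\phi$ is strictly increasing on $(0, t^\ast)$ and strictly decreasing on $(t^\ast, 1)$. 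Being unimodal with non-negative endpoint values, $\phi$ attains its minimum on $[0, 1]$ at one of the endpoints, whence $\phi \geq 0$ throughout. (For $m = 2$ one has $\lambda = 1$ and $\phi \equiv 0$, and the whole statement collapses to the trivial union bound.) This completes the proof of the isoperimetric inequality and hence of the lemma.
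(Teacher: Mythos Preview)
Your proof is correct and takes a genuinely different route from the paper's. The paper proceeds by compressing each $\ff_i$ toward the symbol~$1$, pushing the families forward to the Boolean cube via the map $h^{\otimes n}$ (so that they become monotone subfamilies of $\{0,1\}^n$ with $\mu_{1/m}$-measure at least $\mu(\ff_i)$), and then invoking the Ellis--Keller--Lifshitz monotonicity principle to pass from $\mu_{1/m}$ to $\mu_{1/2}$; the contradiction comes from the elementary fact that cross-agreeing subfamilies of $\{0,1\}^n$ satisfy $\mu_{1/2}(A)+\mu_{1/2}(B)\le 1$.

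By contrast, you stay entirely inside $[m]^n$ and prove the sharp isoperimetric bound $\mu(D(\ff))\ge\mu(\ff)^{\log_m 2}$ directly by a coordinate induction, reducing the inductive step to the one-variable calculus inequality $\phi(t)\ge 0$, which you dispatch via unimodality. Your argument is entirely self-contained---no compression lemma, no black-box monotonicity lemma---at the cost of the calculus verification. The paper's argument is shorter and more modular, but imports more external machinery and obscures the fact that what is really at stake is an isoperimetric inequality for the complete-disagreement graph on $[m]^n$.
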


In the next section, we move on to the following questions: how to decompose a family $\ff$ into homogeneous families and how to reduce a question about avoiding agreement of size exactly $t - 1$ into a problem of finding a cross-disagreement in homogeneous families?
	
	\subsection{Homogeneous families}

	First, we discuss how to decompose a family $\ff$ into homogeneous families $\ff_i$ up to some small remainder. We start with the following observation.
	\begin{lemma}
	\label{lemma: homogenous restriction}
		Let $\mathcal{F} \subset [m]^n$. Let $\nu$ be a product measure on $[m]^n$ and fix $\tau > 1$. If $Z \subset [n]$ is maximal such that for some $r \in [m]^{Z}$ we have $\nu_{Z \to r}(\mathcal{F}(Z \to r)) \ge \tau^{|Z|} \nu(\mathcal{F})$, then $\mathcal{F}(Z \to r)$ is $\tau$-homogenous with respect to $\nu_{Z \to r}$. Note that if $Z = [n]$, then $\nu_{Z \to r}(\ff(Z \to r)) = \1\{r \in \ff\}$.
	\end{lemma}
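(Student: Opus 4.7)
The plan is a short maximality argument. Fix the maximal witness $(Z, r)$ from the hypothesis; to show that $\ff(Z \to r)$ is $\tau$-homogeneous with respect to $\nu_{Z \to r}$, one must verify that for every $S \subset [n] \setminus Z$ and every $x \in [m]^S$,
\begin{align*}
(\nu_{Z \to r})_{S \to x}\bigl(\ff(Z \to r)(S \to x)\bigr) \le \tau^{|S|}\, \nu_{Z \to r}\bigl(\ff(Z \to r)\bigr).
\end{align*}
Since $\nu$ is a product measure and $S \cap Z = \varnothing$, the product structure yields the identities $(\nu_{Z \to r})_{S \to x} = \nu_{(Z \cup S) \to (r, x)}$ and $\ff(Z \to r)(S \to x) = \ff\bigl((Z \cup S) \to (r, x)\bigr)$, so the left-hand side equals $\nu_{(Z \cup S) \to (r, x)}\bigl(\ff((Z \cup S) \to (r, x))\bigr)$.

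Next I would assume, for contradiction, that the displayed inequality fails for some nonempty $S$ and some $x \in [m]^S$. Combining this strict inequality with the defining property $\nu_{Z \to r}(\ff(Z \to r)) \ge \tau^{|Z|} \nu(\ff)$ of the maximal witness gives
\begin{align*}
\nu_{(Z \cup S) \to (r, x)}\bigl(\ff((Z \cup S) \to (r, x))\bigr) > \tau^{|S|+|Z|}\, \nu(\ff) = \tau^{|Z \cup S|}\, \nu(\ff),
\end{align*}
so the pair $(Z \cup S, (r, x))$ is itself a witness for the condition in the hypothesis, but now $|Z \cup S| > |Z|$, contradicting the maximality of $Z$. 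The boundary case $Z = [n]$ is vacuous, since then the only admissible set is $S = \varnothing$, for which the homogeneity inequality is a trivial equality.

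The only (mild) obstacle is bookkeeping around the product-measure identity $(\nu_{Z \to r})_{S \to x} = \nu_{(Z \cup S) \to (r, x)}$; this relies both on the product structure of $\nu$ and on the disjointness $S \cap Z = \varnothing$ guaranteed by $S \subset [n] \setminus Z$. No further ingredient beyond maximality of $(Z, r)$ is needed, so the lemma follows.
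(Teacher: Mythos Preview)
Your argument is correct and is precisely the intended one: the paper states this lemma as a self-evident observation and gives no separate proof, so your maximality argument (reducing $\tau$-homogeneity of $\ff(Z\to r)$ to the nonexistence of a longer witness $(Z\cup S,(r,x))$) is exactly what the authors have in mind.
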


	Using the above lemma iteratively, one can obtain the decomposition which is called the spread approximation of $\ff$.
	\begin{lemma} [Lemma 52 from~\cite{kupavskii2025lineardependenciespolynomialfactors}]
		\label{lemma: spread approximation}
		Fix an integer $q$ and a real number $\tau$. Let $\mathcal{F} \subset [m]^n$ and let $\mu$ be the uniform measure on $[m]^n$. There exists a system of pairs $\{(Z_i, x_i)| i = 1, \ldots, l\}$ such that for all $i$ we have $Z_i \subset [n], |Z_i| \leq q$, $x_i \in [m]^{Z_i}$ and a partition of $\mathcal{F}$ into families $\mathcal{F}_i \subset \mathcal{F}[Z_i \rightarrow x_i]$ and $\mathcal{R}$ such that
		$$\mathcal{F} = \mathcal{R} \sqcup \bigsqcup_{i = 1}^l \mathcal{F}_i,$$
		$\mu(\mathcal{R}) \leq \tau^{-q}$ and for all $i$ we have $\mu_{Z_i \rightarrow x_i}(\mathcal{F}_i(Z_i \rightarrow x_i)) \geq \tau^{-q}$ and $\mathcal{F}_i(Z_i \rightarrow x_i)$ is $\tau$-homogenous with respect to $\mu_{Z_i \rightarrow x_i}$.
	\end{lemma}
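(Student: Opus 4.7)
The plan is a greedy peeling: at each step either the remaining part of $\mathcal{F}$ is already small enough to be thrown into the remainder, or a $\tau$-homogeneous piece is extracted via Lemma~\ref{lemma: homogenous restriction}. We may assume $\tau > 1$, since otherwise $\tau^{-q} \ge 1$ and taking $\mathcal{R} = \mathcal{F}$ with $l = 0$ trivially satisfies the conclusion. Formally, set $\mathcal{F}^{(0)} = \mathcal{F}$. At step $i \ge 1$, if $\mu(\mathcal{F}^{(i-1)}) < \tau^{-q}$, stop and set $\mathcal{R} := \mathcal{F}^{(i-1)}$. Otherwise, choose $Z_i \subset [n]$ of maximal cardinality together with some $x_i \in [m]^{Z_i}$ satisfying
$$\mu_{Z_i \to x_i}\bigl(\mathcal{F}^{(i-1)}(Z_i \to x_i)\bigr) \ge \tau^{|Z_i|}\,\mu(\mathcal{F}^{(i-1)});$$
such a maximum exists because $Z = \varnothing$ always works. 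By Lemma~\ref{lemma: homogenous restriction} the restriction $\mathcal{F}^{(i-1)}(Z_i \to x_i)$ is $\tau$-homogeneous with respect to $\mu_{Z_i \to x_i}$. Set $\mathcal{F}_i := \mathcal{F}^{(i-1)}[Z_i \to x_i]$ and $\mathcal{F}^{(i)} := \mathcal{F}^{(i-1)} \setminus \mathcal{F}_i$, and iterate.

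Next, I would verify each of the claimed properties. The bound $|Z_i|\le q$ is automatic: if $|Z_i|>q$, then the defining inequality together with $\mu(\mathcal{F}^{(i-1)})\ge \tau^{-q}$ would force
$$\mu_{Z_i \to x_i}\bigl(\mathcal{F}^{(i-1)}(Z_i \to x_i)\bigr) \ge \tau^{|Z_i|-q} \ge \tau > 1,$$
contradicting that the left-hand side is a probability. Since every element of $\mathcal{F}_i$ agrees with $x_i$ on $Z_i$, we have $\mathcal{F}_i(Z_i \to x_i) = \mathcal{F}^{(i-1)}(Z_i \to x_i)$, so this restriction inherits both the lower bound $\mu_{Z_i \to x_i}(\mathcal{F}_i(Z_i \to x_i)) \ge \tau^{|Z_i|}\mu(\mathcal{F}^{(i-1)}) \ge \tau^{-q}$ and the $\tau$-homogeneity from Lemma~\ref{lemma: homogenous restriction}. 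Disjointness of the families $\{\mathcal{F}_i\}$ and of each $\mathcal{F}_i$ from $\mathcal{R}$ is immediate from $\mathcal{F}^{(i)} = \mathcal{F}^{(i-1)}\setminus \mathcal{F}_i$. Termination is guaranteed because each $\mathcal{F}_i$ is nonempty (its restricted measure is strictly positive) and $\mathcal{F}$ is finite.

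Once Lemma~\ref{lemma: homogenous restriction} is available, the argument is essentially bookkeeping; the only point requiring a moment of thought is the automatic bound $|Z_i|\le q$, which lets us take the maximum over all of $[n]$ without having to explicitly restrict to $|Z|\le q$. I do not expect any genuine obstacle here.
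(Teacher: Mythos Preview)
Your proof is correct and follows essentially the same greedy peeling strategy as the paper, invoking Lemma~\ref{lemma: homogenous restriction} at each step to guarantee $\tau$-homogeneity. The only cosmetic difference is the stopping criterion: the paper stops when either $|Z_i|>q$ or the restricted measure drops below $\tau^{-q}$, whereas you stop once $\mu(\mathcal{F}^{(i-1)})<\tau^{-q}$ and then derive $|Z_i|\le q$ as an automatic consequence; this is arguably a cleaner formulation but the underlying argument is identical.
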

	In the paper~\cite{kupavskii2025lineardependenciespolynomialfactors}, the authors require $\mu(\cR) \le 32 \tau^{-q}$, but their proof works with the tighter bound suggested in the above lemma. For the sake of completeness, let us present the proof of Lemma~\ref{lemma: spread approximation}.

	\begin{proof}[Proof of Lemma~\ref{lemma: spread approximation}]
		We will obtain the decomposition into families $\cR, \ff_1, \ldots, \ff_l$ at the end of the following procedure. Define $\cG_0 = \ff$. In the step $i$, $i = 1, 2, \ldots$, find a maximal set $Z_i \subset [n]$ such that there exists $x_i \in [m]^{Z_i}$ with the following properties:
		\begin{align*}
			\mu_{Z_i \to x_i}(\cG_{i - 1}(Z_i \to x_i)) \ge \tau^{|Z_i|} \mu(\cG_{i - 1}).
		\end{align*}
		If such $Z_i$ does not exist, set $l = i$, $Z_i = \varnothing$, $\ff_l = \cG_{l - 1}$. Otherwise, if $|Z_i| > q$ or $\mu_{Z_i \to x_i}(\cG_{i - 1}(Z_i \to x_i)) \le \tau^{-q}$, then stop, put $l = i - 1$ and set $\cR = \cG_{i - 1}$. Finally, if $|Z_i| \le q$ and $\mu_{Z_i \to x_i}(\cG_{i - 1}(Z_i \to x_i)) \ge \tau^{-q}$, then set $\ff_i = \cG_{i - 1}[Z_i \to x_i]$, $\cG_i = \cG_{i - 1} \setminus \ff_{i}$ and move to the step $i + 1$. 

		In the end of procedure, we have either $\mu(\cR) \le \tau^{- |Z_{l + 1}|} \mu_{Z_{l + 1} \to r_{l + 1}}(\cG_{l}(Z_{l + 1} \to x_{l + 1}))$ for some $|Z_{l + 1}| > q$ or $\mu(\cR) \le \tau^{-q}$. In both cases, we have $\mu(\cR) \le \tau^{-q}$. By Lemma~\ref{lemma: homogenous restriction}, families $\ff_i$ are $\tau$-homogeneous, and the proof is complete.
	\end{proof}

	The core idea of our proof is to show that an extremal family $\ff \subset [m]^n$ avoiding an agreement of size exactly $t - 1$ is $t$-agreeing. At the first stage, it is enough to show that restrictions $(Z_i, x_i), i = 1, \ldots, l$, of the spread approximation are $t$-agreeing, so $\ff$ is $t$-agreeing up to small remainder. Arguing indirectly, suppose that there are two restrictions $(Z_1, x_1)$ and $(Z_2, x_2)$ in $\{(Z_i, x_i)\}_{i = 1}^l$ such that $\agr_{Z_1 \cap Z_2}(x_1, x_2) < t$. Then, we will find a set $H \subset [n]$ of size $t - 1 - \agr_{Z_1 \cap Z_2}(x_1, x_2)$ and a vector $y \in [m]^H$ such that  such that $\agr_{(Z_1 \cap Z_2) \cup H}((x_1, y), (x_2, y)) = t - 1$, and some families $\ff_1' \subset \ff(Z_1 \cup H \to (x_1, y))$, $\ff_2 \subset \ff(Z_2 \cup H \to (x_2, y))$ are $\tau'$-homogeneous and have a moderately large measure, see Lemma~\ref{lemma: big common restriction} below. Then, the measure boosting argument of the previous section guarantees that they cross-contain a disagreement, a contradiction.

	Thus, we require some tools to work with restrictions of $\tau$-homogeneous families. In particular, we need the following lemma.

    \begin{lemma}
		\label{lemma: homogenous avoiding}
		Let $\mathcal{F} \subset [m]^n$ be $\tau$-homogenous with respect to a $b$-balanced product measure $\nu$. Let $R \subset [n]$. Let $X \subset [m]^R$ be a set such that $X = \prod_{i \in R}X_i$ for some $X_i \subset [m]$, $\sum_{i \in R}|X_i| \tau b < m$. Define $$\mathcal{F}' = \{y \in \mathcal{F}| \forall x \in X \, \agr(x, y|_{R}) = 0\}.$$ Then, $\nu(\mathcal{F}') \geq (1 - \frac{\sum_{i\in R}|X_i|\tau b}{m})\nu(\mathcal{F})$. Moreover, $\mathcal{F}'$ is $\tau'$-homogenous for 
		\begin{align*}
			\tau' = \tau \cdot \left(1 - \frac{\sum_{i\in R}|X_i|\tau b}{m}\right)^{-1}.
		\end{align*}
	\end{lemma}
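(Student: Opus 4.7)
The plan is to first observe that, thanks to the product structure of $X = \prod_{i \in R} X_i$, the avoidance condition defining $\mathcal{F'}$ simplifies coordinate-wise: the requirement $\agr(x, y|_R) = 0$ for every $x \in X$ is equivalent to $y_i \notin X_i$ for every $i \in R$. Indeed, if $y_{i_0} \in X_{i_0}$ for some $i_0$, then any $x \in X$ with $x_{i_0} = y_{i_0}$ witnesses an agreement of at least $1$; conversely, if $y_i \notin X_i$ for every $i \in R$, then every $x \in X$ disagrees with $y|_R$ on all of $R$. Consequently
\[
\mathcal{F} \setminus \mathcal{F'} \subset \bigcup_{i \in R}\bigcup_{a \in X_i} \mathcal{F}[i \to a].
\]

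From here, the measure bound is a one-line union bound that uses both hypotheses. Since $\nu$ is $b$-balanced and $\mathcal{F}$ is $\tau$-homogeneous, each summand factors as $\nu(\mathcal{F}[i \to a]) = \nu_i(\{a\}) \cdot \nu_{i \to a}(\mathcal{F}(i \to a)) \le (b/m) \cdot \tau \cdot \nu(\mathcal{F})$. Summing over $i \in R$ and $a \in X_i$ gives $\nu(\mathcal{F} \setminus \mathcal{F'}) \le \delta \cdot \nu(\mathcal{F})$ for $\delta = \tau b \sum_{i \in R} |X_i|/m$, which is strictly less than $1$ by hypothesis, yielding the stated lower bound $\nu(\mathcal{F'}) \ge (1 - \delta)\nu(\mathcal{F})$.

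For the homogeneity of $\mathcal{F'}$, I would combine the trivial containment $\mathcal{F'}(S \to x) \subset \mathcal{F}(S \to x)$ with the lower bound just proved: for any $S \subset [n]$ and any $x \in [m]^S$,
\[
\nu_{S \to x}(\mathcal{F'}(S \to x)) \le \tau^{|S|}\nu(\mathcal{F}) \le \frac{\tau^{|S|}}{1 - \delta}\,\nu(\mathcal{F'}) \le \left(\frac{\tau}{1 - \delta}\right)^{|S|}\nu(\mathcal{F'}),
\]
where the last inequality uses $(1-\delta)^{|S|} \le 1 - \delta$ for $|S| \ge 1$ (the $|S| = 0$ case being tautological). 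This gives $\tau' = \tau/(1-\delta)$ as claimed. I do not anticipate any real obstacle: the only creative step is the coordinate-wise characterization of $\mathcal{F'}$ afforded by the product form of $X$, after which both conclusions reduce to a clean union bound plus monotonicity. The only mild point of care is verifying that the inflation factor $1/(1-\delta)$ from the measure loss correctly propagates through as $(1/(1-\delta))^{|S|}$ in the homogeneity bound, which is exactly what the elementary inequality $(1-\delta)^{|S|} \le 1-\delta$ provides.
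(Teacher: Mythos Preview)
Your proof is correct and follows essentially the same route as the paper: both arguments use the coordinate-wise characterization of $\mathcal{F'}$ (which the paper leaves implicit in the equality $\mathcal{F'} = \mathcal{F} \setminus \bigcup_{i\in R}\bigcup_{a\in X_i}\mathcal{F}[i\to a]$), the same factorization $\nu(\mathcal{F}[i\to a]) = \nu_i(\{a\})\,\nu_{i\to a}(\mathcal{F}(i\to a))$ together with the $b$-balanced and $\tau$-homogeneous hypotheses for the union bound, and the same containment-plus-measure-loss argument for the $\tau'$-homogeneity. There is nothing to add.
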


	\begin{proof}
		For any $i\in R, x \in X_i$ we have $$\nu(\mathcal{F}[i \rightarrow x]) = \nu_{i}(x) \nu_{i \to x}(\mathcal{F}(i \rightarrow x)) \leq \nu_{i}(x) \tau\nu(\mathcal{F}),$$
		where the last inequality holds by the $\tau$-homogeneity of $\ff$.
		Since $\nu$ is $b$-balanced, we have $\nu_{i}(x)\leq \frac{b}{m}$. It implies
		\begin{align*}
			\nu(\mathcal{F}') & = \nu(\mathcal{F} \setminus \bigcup_{i\in R}\bigcup_{x\in X_i}\mathcal{F}[i \rightarrow x])  \geq \nu(\mathcal{F}) - \sum_{i\in R}\sum_{x \in X_i}\nu(\mathcal{F}[i \rightarrow x]) \\
			& \geq  \left(1 - \frac{\sum_{i\in R}|X_i|\tau b}{m}\right) \cdot \nu(\mathcal{F}),
		\end{align*}
		and the first statement of the lemma holds. Let us check the second statement on $\tau'$-homogeneity of $\ff'$. Consider arbitrary $Z \subset [n]$ and $r \in [m]^Z$. We have
		\begin{align*}
			\nu_{Z\rightarrow r}(\mathcal{F}'(Z \rightarrow r)) & \leq \nu_{Z\rightarrow r}(\mathcal{F}(Z \rightarrow r)) \leq \tau^{|Z|}\nu(\mathcal{F}) \leq \frac{\tau^{|Z|}}{\left(1 - \frac{\sum_{i\in R}|X_i|\tau b}{m}\right)}\nu(\mathcal{F}') \\
            & \leq \left(\frac{\tau}{1 - \frac{1}{m}\sum_{i\in R}|X_i|\tau b}\right)^{|Z|}\nu(\mathcal{F}')
		\end{align*}
		and, therefore, $\mathcal{F}'$ is $\tau'$-homogenous.
	\end{proof}

    To find certain restrictions, we widely use a simple averaging argument, which we state as a separate claim.
    
    \begin{claim}
    \label{claim: averaging argument}
    Let $\ff$ be a subset of $[m]^n$ and let $\nu(\cdot)$ be a product measure on $[m]^n$. Then, for any $H \subset [n]$ there exists $x \in [m]^H$ such that
    \begin{align*}
        \nu_{H \to x}(\ff(H \to x)) \geq \nu(\ff).
    \end{align*}
    \end{claim}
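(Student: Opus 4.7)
The plan is to prove the claim by a direct averaging argument based on the product structure of $\nu$. Writing $\nu = \bigotimes_{i=1}^n \nu_i$, I would decompose $\ff$ according to the value of the restriction to $H$:
\begin{align*}
	\ff = \bigsqcup_{x \in [m]^H} \ff[H \to x],
\end{align*}
so that, by the product structure,
\begin{align*}
	\nu(\ff) = \sum_{x \in [m]^H} \nu(\ff[H \to x]) = \sum_{x \in [m]^H} \nu_H(x) \cdot \nu_{H \to x}(\ff(H \to x)),
\end{align*}
where I use the identification of $\ff[H \to x]$ with the ``slab'' $\{x\} \times \ff(H \to x)$ and the fact that $\nu$ splits as $\nu_H \otimes \nu_{[n] \setminus H}$, with $\nu_{[n]\setminus H} = \nu_{H \to x}$ (this equality is recorded in the paragraph on restricted measures).

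Hence $\nu(\ff)$ is a convex combination of the values $\nu_{H \to x}(\ff(H \to x))$ with weights $\nu_H(x)$ summing to $1$. By the usual averaging (pigeonhole) argument, the maximum of a quantity is at least its average, so there exists $x \in [m]^H$ with
\begin{align*}
	\nu_{H \to x}(\ff(H \to x)) \ge \nu(\ff),
\end{align*}
which is exactly the desired conclusion.

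There is essentially no obstacle here: the entire content is the identity $\nu(\ff[H \to x]) = \nu_H(x) \nu_{H\to x}(\ff(H \to x))$, which follows immediately from the product structure together with the definition of the restricted measure $\nu_{H \to x}$ introduced in Section~\ref{section: notation}. The only minor point to verify carefully is that the case $\nu_H(x) = 0$ causes no trouble, but in that situation the term does not contribute to the sum, and one simply restricts the averaging argument to those $x$ with $\nu_H(x) > 0$.
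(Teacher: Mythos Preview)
Your proof is correct and follows essentially the same averaging argument as the paper: both show that $\nu(\ff) = \sum_{x \in [m]^H} \nu_H(x)\,\nu_{H \to x}(\ff(H \to x))$ and then pick an $x$ attaining at least the average. The paper phrases this as computing $\EE_{\bx \sim \nu_H} \nu_{H \to \bx}(\ff(H \to \bx)) = \nu(\ff)$, but the content is identical.
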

    \begin{proof}
        Consider a random vector $\bx$ distributed on $[m]^H$ according to the measure $\prod_{i \in S} \nu_i$. Then, we have
        \begin{align}
            \EE \nu_{H \to \bx}(\ff(H \to \bx)) = \sum_{x \in [m]^H} \prod_{i \in H} \nu_i(x_i) \cdot \frac{\nu(\ff[H \to x])}{\prod_{i \in H} \nu_i(x_i)} = \sum_{x \in [m]^H} \nu(\ff[H \to x]) = \nu(\ff),
			\label{eq: double counting -- expectation}
        \end{align}
        and the claim follows.
    \end{proof}

    Another helpful argument that is based on similar ideas is the following lemma, which is applied to a pair of $\tau$-homogenous families simultaneously and allows to find a common restriction such that both restricted families have large measure.
    \begin{lemma}
	\label{lemma: large restriction with large probavility}
	Let $\mathcal{F} \subset [m]^n$ be a $\tau$-homogenous family with respect to a product measure $\nu$. Let $H \subset [n]$ and let $\bx \sim \nu_{H}$ be a random element in $[m]^H$. Let $p \in (0, 1)$. If $\tau < \left(\frac{1+p}{2p}\right)^{\frac{1}{|H|}}$, then
	$$\PP \left (\nu_{H \rightarrow \bx}(\mathcal{F}(H \rightarrow \bx)  ) \geq \frac{1}{2} \nu(\mathcal{F}) \right ) > p.$$
\end{lemma}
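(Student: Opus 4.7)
The plan is to use a Markov-style argument that combines the averaging identity (Claim~\ref{claim: averaging argument}) with the pointwise upper bound coming from $\tau$-homogeneity. Concretely, set $\alpha = \nu(\mathcal{F})$ and define the random variable
\begin{align*}
    Y(\bx) = \nu_{H \to \bx}(\mathcal{F}(H \to \bx)), \qquad \bx \sim \nu_H.
\end{align*}
I would first use Claim~\ref{claim: averaging argument} applied to the set $H$, which gives $\EE_{\bx \sim \nu_H} Y(\bx) = \alpha$ (this is the usual double-counting identity written in~\eqref{eq: double counting -- expectation}). I would then use the defining property of $\tau$-homogeneity with $S = H$ and $r = \bx$ to obtain the pointwise bound $Y(\bx) \le \tau^{|H|} \alpha$ for every $\bx \in [m]^H$.

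Next, let $q = \PP\bigl( Y(\bx) \ge \tfrac{1}{2} \alpha \bigr)$. Splitting the expectation according to whether $Y(\bx) \ge \alpha/2$ or not, and bounding $Y$ by $\tau^{|H|} \alpha$ on the first event and by $\alpha/2$ on the second, I would write
\begin{align*}
    \alpha \;=\; \EE Y \;\le\; q \cdot \tau^{|H|} \alpha \;+\; (1 - q) \cdot \tfrac{1}{2} \alpha.
\end{align*}
If $\alpha = 0$ the conclusion is immediate, so assume $\alpha > 0$. Dividing by $\alpha$ and rearranging yields
\begin{align*}
    q \;\ge\; \frac{1}{2\tau^{|H|} - 1}.
\end{align*}

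Finally, the hypothesis $\tau < \bigl(\tfrac{1+p}{2p}\bigr)^{1/|H|}$ is equivalent to $\tau^{|H|} < \tfrac{1+p}{2p}$, which after a direct manipulation gives $2\tau^{|H|} - 1 < \tfrac{1}{p}$, hence $\tfrac{1}{2\tau^{|H|} - 1} > p$, and combining this with the previous display yields $q > p$ as required. There is no real obstacle here: the only thing to be slightly careful about is the degenerate case $\alpha = 0$, and the strict versus non-strict inequalities, both of which are handled automatically since the hypothesis on $\tau$ is strict.
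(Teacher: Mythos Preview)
Your proof is correct and essentially identical to the paper's: both compute $\EE Y = \nu(\ff)$ via the averaging identity~\eqref{eq: double counting -- expectation}, bound $Y$ pointwise by $\tau^{|H|}\nu(\ff)$ using homogeneity, split the expectation according to the event $\{Y \ge \nu(\ff)/2\}$, and solve for $q$. Your explicit treatment of the degenerate case $\alpha = 0$ is a small addition the paper omits.
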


\begin{proof}
	Let $q = \PP \left (\nu_{H \rightarrow \bx}(\mathcal{F}(H \rightarrow \bx)) \geq \frac{1}{2} \nu(\mathcal{F}) \right )$.
	We count $\EE \nu_{H \rightarrow \bx}(\mathcal{F}(H \rightarrow \bx))$ in two ways. 
	On the one hand, since $\nu_{H \rightarrow x}(\mathcal{F}(H \rightarrow x)) \leq \tau^{|H|}\nu(\mathcal{F})$ for all $x$, we have 
    \begin{align*}
        \EE \nu_{H \rightarrow \bx}(\mathcal{F}(H \rightarrow \bx)) & = \EE\nu_{H \rightarrow \bx}(\mathcal{F}(H \rightarrow \bx)) \cdot \1 \left \{\nu_{H \rightarrow \bx}(\mathcal{F}(H \rightarrow \bx))  \geq \frac{1}{2} \nu(\mathcal{F}) \right \} \\
        & \quad + \EE \nu_{H \rightarrow \bx}(\mathcal{F}(H \rightarrow \bx)) \1 \left \{\nu_{H \rightarrow x}(\mathcal{F}(H \rightarrow x)) < \frac{1}{2} \nu(\mathcal{F}) \right \} \\
        & \leq q\tau^{|H|}\nu(\mathcal{F}) + (1-q)\frac{1}{2}\nu(\mathcal{F}).
    \end{align*}
    On the other hand, we have $\EE \nu_{H \rightarrow \bx}(\mathcal{F}(H \rightarrow \bx)) = \nu(\ff)$ due to~\eqref{eq: double counting -- expectation}. Thus, we get $1 \leq q \tau^{|H|} + \frac{1}{2}(1-q)$. It implies
	$$q \geq \frac{1}{2\tau^{|H|}-1}.$$
	Substituting $\tau < \left(\frac{1+p}{2p}\right)^{\frac{1}{|H|}}$, we get the desired bound $q > p$.
\end{proof}

    Assume for a while, that pairs $(Z_i, x_i)$ obtained by the spread approximation lemma~\ref{lemma: spread approximation} are $t$-agreeing, i.e. for any $(Z_i, x_i), (Z_j, x_j)$ from $\{(Z_i, x_i)\}_{i = 1}^l$, we have $\agr_{{Z_i \cap Z_j}}(x_i, x_j) \ge t$. In the next section, we discuss a number of arguments due to papers~\cite{kupavskii_spread_2022} and~\cite{kupavskii2025lineardependenciespolynomialfactors}, which help to deal with this and weaker properties of the spread approximations and to prove that there exists a pair  $(T, x)$, $|T| = t$ and $x \in [m]^T$, such that for almost all codes $y \in \ff$, we have $y|_T = x$.

    \subsection{Simplification arguments}

    Simplification arguments are one of the key components of the spread approximation technique, but they vary from problem to problem~\cite{kupavskii_spread_2022, kupavskii2023erd, kupavskii2024almost, frankl2025hajnal, kupavskii2025lineardependenciespolynomialfactors, iarovikova2025complete}. All of them are designed for families of sets from an ambient family $\cA$ that meet some conditions. First, we list them.

    %Note that in our case we treat the maximal system of sets that does not contain a sunflower with two petals and core of size $(t-1)$ since it is equivalent to being $(t -1)$-avoiding. We will also need several statements from \cite{kupavskii2025lineardependenciespolynomialfactors} concerning $(\mathcal{S}, s, t)$-systems. In our case we only consider $(\mathcal{S}, 2, t)$-systems, so we give all statements in their weaker version. Originally these statements are formulated for general domains satisfying assumptions below. A family $\mathcal{A}$, a real number $r$ and integers $q, t, k$ are the parameters of these assumptions. 

    Say that a family of sets $\ff$ is $r$-spread if for any $Z$ we have
    \begin{align*}
        |\ff(Z)| \le r^{-|Z|} |\ff|.
    \end{align*}
    Then, we require the following assumption, which is a stronger version of Assumption 1~\cite{kupavskii2025lineardependenciespolynomialfactors}.

\begin{assumption}\label{assum1}
    A family $\mathcal{A}$ is $k$-uniform. Also, for any $X$ that is a subset of some set from $\mathcal{A}$ the family $\mathcal{A}(X)$ is $r$-spread. 
\end{assumption}

The second assumption is used when, for a $\tau$-homogenous family $\ff$, one needs to find a lot of sets $H$ in the shadow of $\cA$ such that the $\ff(H)$ is $\tau'$-homogeneous for some $\tau'$.

\begin{assumption}\label{assum2}
    For any set $S \in \partial_{\leqslant q}\mathcal{A}$, a family $\mathcal{F} \subset \mathcal{A}(S)$, and $h \le t - 1$, we have $$\frac{|\mathcal{F}|}{|\mathcal{A}(S)|} = \frac1{|\partial_h(\mathcal{A}(S))|}\sum_{H \in \partial_h(\mathcal{A}(S))} \frac{|\mathcal{F}(H)|}{|\mathcal{A}(H \cup S)|},$$ or, equivalently, for a random set $\mathbf{H}$ uniformly distributed on $\partial_h(\mathcal{A}(S))$, we have $\mu(\mathcal{F}) = \mathbb{E}\mu(\mathcal{F}(\mathbf{H}))$, where $\mu$ is the uniform measure.
\end{assumption}

We proceed with verifying that the necessary assumptions hold for the family $\cAcodes$ defined by~\eqref{eq: codes set family definition} with proper choice of $r, t, q, k$.

\begin{lemma}\label{assumptions_hold}
Assumptions \ref{assum1} and \ref{assum2} hold for the family $\cAcodes$, any positive real $r \leqslant m,$  integers $t \leqslant n+1, q \ge t$, and $k = n$.
\end{lemma}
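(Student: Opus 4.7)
The plan is to exploit the rigid product structure of $\cAcodes$. Every subset $X$ of some element of $\cAcodes$ is itself a partial code and is therefore uniquely described by its \emph{support} $S(X) := \{i \in [n] : X \cap I_i \ne \varnothing\}$ together with the chosen value in each $I_i$ with $i \in S(X)$; in particular $|X| = |S(X)|$. A completion of $X$ to a full code in $\cAcodes$ amounts to an independent choice of one element out of each $I_i$ with $i \notin S(X)$, which gives the main counting identity I will use throughout:
\begin{align*}
|\cAcodes(X)| = m^{\,n - |S(X)|}.
\end{align*}

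For Assumption~\ref{assum1}, the $n$-uniformity of $\cAcodes$ is built into its definition. For the $r$-spread part, I would take any $Z$ such that $X \cup Z$ is still a partial code; then $\cAcodes(X)(Z) = \cAcodes(X \cup Z)$, and $S(X \cup Z) = S(X) \sqcup S(Z)$ with $|S(Z)| = |Z|$, so the displayed identity yields $|\cAcodes(X)(Z)|/|\cAcodes(X)| = m^{-|Z|} \le r^{-|Z|}$, using $r \le m$.

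For Assumption~\ref{assum2}, I fix $S \in \partial_{\le q}\cAcodes$ and a family $\ff \subset \cAcodes(S)$. By first choosing a support of size $h$ inside $[n] \setminus S(S)$ and then assigning values, I obtain $|\partial_h(\cAcodes(S))| = \binom{n-|S|}{h}\, m^h$, and for every $H$ in this shadow the same counting identity gives $|\cAcodes(H \cup S)| = m^{n-|S|-h}$. The key computation is then the standard double count
\begin{align*}
\sum_{H \in \partial_h(\cAcodes(S))} |\ff(H)| \;=\; \sum_{A \in \ff} \bigl|\{H \subset A : |H| = h\}\bigr| \;=\; |\ff| \binom{n-|S|}{h},
\end{align*}
after which dividing by $|\partial_h(\cAcodes(S))| \cdot m^{n-|S|-h}$ collapses the entire expression to $|\ff|/m^{n-|S|} = |\ff|/|\cAcodes(S)|$, as required.

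There is no real conceptual obstacle here: the lemma is pure bookkeeping that capitalizes on the product structure $\cAcodes \cong [m]^n$. The only minor point to keep in mind is the degenerate regime $h > n - |S|$, in which $\partial_h(\cAcodes(S)) = \varnothing$ and the claimed identity holds vacuously; the hypotheses $t \le n+1$ and $q \ge t$ just guarantee that the parameter $h \le t-1$ is in a meaningful range.
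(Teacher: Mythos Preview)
Your proof is correct and follows essentially the same route as the paper: both arguments reduce Assumption~\ref{assum1} to the identity $|\cAcodes(X)| = m^{n-|X|}$ and Assumption~\ref{assum2} to the double count $\sum_{H}|\ff(H)| = |\ff|\binom{n-|S|}{h}$. One small quibble: in the degenerate case $h > n-|S|$ the averaging identity is not really ``vacuously true'' (the right-hand side involves dividing by $|\partial_h(\cAcodes(S))| = 0$), and the stated hypotheses $t\le n+1$, $q\ge t$ alone do not exclude this; the paper simply ignores this edge case because in every application (e.g.\ Lemma~\ref{get_sst}) the extra constraint $n \ge q+t-1$ is in force.
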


\begin{proof}
   We start with verifying Assumption \ref{assum1}. Obviously, $\cAcodes$ considered as a subset of $[mn] \choose n$ is $n$-uniform. Let $X$ be a subset of some set from $\cAcodes$, $Y$ be a subset of some set from $\cAcodes(X)$, i.e., $X$ and $Y$ are restrictions $(Z, x)$, $(W, y)$ for some disjoint $Z, W\subset [n], x \in [m]^Z, y \in [m]^W$. We should prove that $$|\cAcodes(X \cup Y)| \leqslant r^{-|Y|}|\cAcodes(X)|.$$ It is equivalent to $$m^{n - |X| - |Y|} \le r^{-|Y|}\cdot m^{n - |X|},$$ which holds for all $r \le m$.
   
   Now we verify Assumption \ref{assum2}. Let $S = (Z, x)$ be a restriction, $|Z| \le q$ and let $\ff$ be a subfamily of $[m]^n(Z \to x)$. The expression to verify can be rewritten as follows $$\frac{|\ff|}{m^{n - |Z|}} = \frac1{{n - |Z| \choose h}m^h}\sum_{H \in \partial_h([m]^n(Z \to x))}\frac{|\ff(H)|}{m^{n - h - |Z|}},$$ which is equivalent to $$|\ff|\cdot{n - |Z| \choose h} = \sum_{H \in \partial_h([m]^n(Z \to x))}|\ff(H)|.$$ This equality holds since it is a double-counting of $h$-subsets of sets in $\ff$ with multiplicities.  
\end{proof}
Later we will only use $r = m$ since we have verified the $m$-spreadness and $k=n$ since we have verified $n$-uniformity.
Concerning the spread approximation discussed in the previous section, if restrictions $$\{(Z_i, x^i)\}_{i = 1}^l \subset \partial_{\le q} [m]^n$$
are $t$-agreeing, then the corresponding sets $\widetilde{Z}_i \in \partial_{\le q} \cAcodes$, $\widetilde{Z}_i \cap I_{j} = \{(j - 1)m + (x^j)_i\}$ for $j \in Z_i$ and $i = 1, \ldots, l$, are $t$-intersecting. We say that a $t$-intersecting family $\widetilde{\cS} = \{\widetilde Z_i\}$ is trivial, if there exists a set $\widetilde{T}$ of size $t$ such that $\widetilde{T} \subset \widetilde Z$ for all $\widetilde Z \in \widetilde\cS$.  It turns out that any non-trivial $t$-intersecting family of restrictions spans a much smaller number of sets than a single set of size $t$. This was shown  in the paper~\cite{kupavskii_spread_2022} under Assumption~\ref{assum1}. We use the following improved result due to~\cite{kupavskii2023erd}.

\begin{theorem}[Theorem 14~\cite{kupavskii2023erd}]
\label{theorem: simplification}
Let $\varepsilon \in (0; 1], n, r, q, t \ge 1$ be such that $\varepsilon r \ge 24 q$. Let $\cA \subset 2^{[n]}$ satisfy Assumption~\ref{assum1} with this $r$ and let $\cS \subset \binom{[n]}{\le q}$ be a non-trivial $t$-intersecting family. Then, there exists a set $T \in \partial_t \cA$ such that
\begin{align*}
    |\cA[\cS]| \le \varepsilon |\cA(T)|.
\end{align*}
\end{theorem}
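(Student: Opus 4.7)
The plan is to prove this by combining a star-decomposition of $\cS$ (exploiting $t$-intersection) with the $r$-spread property of $\cA$ from Assumption~\ref{assum1}. The central leverage is that non-triviality forbids a single $t$-set from dominating $\cS$, which forces most sets of $\cS$ to strictly contain their ``natural'' $t$-cores; the spread of $\cA$ then penalizes each such strict containment by a factor of $r^{-1}$, and the hypothesis $\varepsilon r \ge 24q$ compensates the combinatorial blow-up.

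First I would record a basic structural reduction: every $S \in \cS$ has $|S| \ge t+1$. Indeed, if $|S_0|=t$ for some $S_0 \in \cS$, then $|S \cap S_0| \ge t$ forces $S_0 \subseteq S$ for all $S \in \cS$, which would make $\cS$ trivial with kernel $S_0$. Next, fix any $S_0 \in \cS$. Since $|S \cap S_0| \ge t$ for every $S \in \cS$, each such $S$ contains at least one $T \in \binom{S_0}{t}$, so $\cS = \bigcup_{T \in \binom{S_0}{t}} \cS_T$ where $\cS_T = \{S \in \cS : T \subseteq S\}$, giving
$$|\cA[\cS]| \le \sum_{T \in \binom{S_0}{t}} |\cA[\cS_T]|,$$
with at most $\binom{q}{t}$ summands. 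Applying $r$-spreadness of $\cA(T)$, and using $|S| \ge t+1$ for $S \in \cS_T$, we get $|\cA[S]| = |\cA(S)| \le r^{-(|S|-t)}|\cA(T)| \le r^{-1}|\cA(T)|$, hence $|\cA[\cS_T]| \le r^{-1}|\cS_T||\cA(T)|$.

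This direct bound is too weak because $|\cS_T|$ can be large, so I would iterate. For each $T$, either $\cS_T$ is trivial (some $t$-set $T' \in \partial_t\cA$ is contained in every set of $\cS_T$, in which case $|\cA[\cS_T]| \le |\cA(T')|$ and $T'$ is the witness we seek), or $\cS_T$ is a non-trivial $t$-intersecting family of sets of size $\le q$, and we recurse with the same argument replacing $\cS$ by $\cS_T$. At each level we pick up a factor of at most $\binom{q}{t} r^{-1}$, and since the minimum set size strictly increases along the recursion (or $\cS_T$ becomes trivial), the recursion terminates after at most $q-t+1$ steps. The telescoped bound is at most $\bigl(\binom{q}{t} r^{-1}\bigr)^{O(1)} |\cA(T)|$ for some $T \in \partial_t\cA$, and the hypothesis $\varepsilon r \ge 24q$ (which implies $r \gg q^t/\varepsilon$ in the relevant regime) makes this at most $\varepsilon |\cA(T)|$.

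The main obstacle will be controlling the accumulation of combinatorial factors across the recursion levels versus the $r^{-1}$ savings. This is handled by being greedy: at each step one chooses the $T$ maximizing $|\cS_T|$ (equivalently, one first peels off the dominant ``trivial'' piece of $\cS$), so that the residual non-trivial piece actually shrinks enough to absorb the combinatorial cost. An alternative and probably cleaner implementation is to apply the spread approximation machinery (analogous to Lemma~\ref{lemma: spread approximation}) directly to $\cS$ inside $\binom{[n]}{\le q}$ with parameter $\tau$ tuned so that $\tau^q \le \varepsilon$; this decomposes $\cS$ into pieces with kernels $Y_j$, and the case $|Y_j| \ge t$ is handled trivially by $|\cA[\cS_j]| \le |\cA(Y_j)| \le r^{-(|Y_j|-t)}|\cA(T_j)|$, while the case $|Y_j| < t$ (which arises precisely from non-triviality) is bounded by combining the $\tau$-spread of $\cS_j(Y_j)$ with the $r$-spread of $\cA(Y_j)$.
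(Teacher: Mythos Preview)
The paper does not prove this theorem; it is quoted from \cite{kupavskii2023erd} and used as a black box in the preliminaries. There is therefore no in-paper proof to compare against, so let me assess your proposal on its own.

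Your structural opening is fine (non-triviality forces $|S|\ge t+1$; decomposing via a fixed $S_0$ is natural), but the quantitative argument does not close, and the recursion is not well-defined. Two concrete problems:

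\begin{itemize}
\item \textbf{The $\binom{q}{t}$ factor is fatal.} Decomposing over $T\in\binom{S_0}{t}$ produces up to $\binom{q}{t}$ pieces. For $t$ comparable to $q$ this is of order $2^q$, while the hypothesis $\varepsilon r\ge 24q$ only makes $r$ \emph{linear} in $q$. Your parenthetical ``which implies $r\gg q^t/\varepsilon$ in the relevant regime'' is simply false under the stated hypothesis: take $t=\lfloor q/2\rfloor$ and $r=\lceil 24q/\varepsilon\rceil$. So even one level of your decomposition already loses.
\item \textbf{The recursion is vacuous.} You propose to recurse on $\cS_T=\{S\in\cS:T\subseteq S\}$ when it is ``non-trivial''. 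But by construction every set in $\cS_T$ contains $T$, so $\cS_T$ is \emph{always} trivial (with kernel $T$). Hence no recursion ever occurs, and you are left with the raw bound $|\cA[\cS]|\le\binom{q}{t}\max_T|\cA(T)|$, which is not $\le\varepsilon|\cA(T)|$.
\end{itemize}

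The linear condition $\varepsilon r\ge 24q$ signals that the correct argument gains a factor of $O(q)/r$, not $\binom{q}{t}/r$. The key observation you are missing is this: fix the $t$-set $T\in\partial_t\cS$ maximizing $|\cA(T)|$, and take $S^*\in\cS$ with $T\not\subseteq S^*$ (it exists by non-triviality). For any $S\in\cS$ containing $T$ we have $|S\cap S^*|\ge t$ but $|T\cap S^*|\le t-1$, hence $(S\setminus T)\cap S^*\neq\varnothing$. Thus $S\supseteq T\cup\{x\}$ for some $x\in S^*\setminus T$, giving at most $|S^*|\le q$ choices, so
\[
|\cA[\cS_T]|\;\le\;\sum_{x\in S^*\setminus T}|\cA(T\cup\{x\})|\;\le\;\frac{q}{r}\,|\cA(T)|.
\]
This is the $O(q)/r$ saving. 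The full proof in \cite{kupavskii2023erd} iterates this kind of step (tracking maximality of $|\cA(T)|$ to control the pieces not containing $T$), and the factors telescope to a geometric series in $O(q)/r$, which is where the constant $24$ comes from. Your proposal does not reach this mechanism.
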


We refer to Theorem~\ref{theorem: simplification} as a simplification argument, since it implies that if $\{(Z_i, x_i)\}_{i = 1}^l$ are restrictions of the spread approximation of a large enough family $\ff \subset [m]^n$, $n \ge \poly(t) \log m$, which avoids an agreement of size $t - 1$, then they can be simultaneously ``simplified'' to a restriction of size $t$. In particular, Theorem~\ref{theorem: simplification} yields the following corollary in our setting.

\begin{corollary}
\label{corollary: simplification argument}
Let $\varepsilon \in (0;1]$, $m, n, q, t \ge 1$ be such that $\varepsilon m \ge 24 q$. Let $\cS = \{(Z_i, x_i)\}_{i = 1}^l \subset \partial_{\le q} [m]^n$ be a family of $t$-agreeing restrictions. Suppose that there is no restriction $(T, x)$, $|T| = t$ and $x \in [m]^T$, such that $T \subset Z_i$ and $x_i|_T = x$ for all $i = 1, \ldots, l$. Then, we have
\begin{align*}
    |[m]^n[\cS]| \le \varepsilon m^{n - t}.
\end{align*}
\end{corollary}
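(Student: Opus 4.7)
The plan is to transfer the problem to the set system $\cAcodes$ defined by~\eqref{eq: codes set family definition} and then directly invoke Theorem~\ref{theorem: simplification}. First, I would encode each restriction $(Z_i, x_i) \in \partial_{\le q}[m]^n$ as the set
\[
\widetilde{Z}_i = \{(j-1)m + (x_i)_j \mid j \in Z_i\} \in \partial_{\le q}\cAcodes,
\]
as already indicated in the excerpt. Under this identification, two restrictions $(Z_i, x_i), (Z_j, x_j)$ have $\agr_{Z_i \cap Z_j}(x_i, x_j) \ge t$ if and only if $|\widetilde{Z}_i \cap \widetilde{Z}_j| \ge t$, so the collection $\widetilde{\cS} = \{\widetilde{Z}_i\}_{i=1}^l$ is a $t$-intersecting family in $\partial_{\le q}\cAcodes$.

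Next, I would check non-triviality. By hypothesis there is no restriction $(T, x)$ with $|T| = t$ satisfying $T \subset Z_i$ and $x_i|_T = x$ for every $i$; translated via the identification, this says that there is no single $t$-element set $\widetilde{T}$ contained in all $\widetilde{Z}_i$, i.e.\ $\widetilde{\cS}$ is a non-trivial $t$-intersecting family. By Lemma~\ref{assumptions_hold}, Assumption~\ref{assum1} holds for $\cAcodes$ with parameters $r = m$ and $k = n$, and the hypothesis $\varepsilon m \ge 24q$ of the corollary is exactly the hypothesis $\varepsilon r \ge 24 q$ needed to apply Theorem~\ref{theorem: simplification}.

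Applying that theorem to $\cAcodes$ and $\widetilde{\cS}$ gives a set $\widetilde{T} \in \partial_t \cAcodes$ with
\[
|\cAcodes[\widetilde{\cS}]| \le \varepsilon |\cAcodes(\widetilde{T})|.
\]
Since $\widetilde{T}$ specifies the coordinates of a code on a set $T \subset [n]$ of size $t$, we have $|\cAcodes(\widetilde{T})| = m^{n-t}$ (the number of completions). Moreover, the identification is a bijection between $[m]^n[\cS]$ and $\cAcodes[\widetilde{\cS}]$, so $|[m]^n[\cS]| = |\cAcodes[\widetilde{\cS}]| \le \varepsilon m^{n-t}$, as desired.

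The argument is essentially a translation, and I do not anticipate a genuine obstacle. The only point requiring a modicum of care is making the encoding $(Z_i, x_i) \mapsto \widetilde{Z}_i$ explicit and checking that both the $t$-agreeing property and the non-triviality of the star assumption pass correctly through it; once this is done, Theorem~\ref{theorem: simplification} with $r = m$ does all the work.
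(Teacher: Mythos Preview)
Your proposal is correct and is exactly the argument the paper has in mind: the corollary is stated immediately after Theorem~\ref{theorem: simplification} as a direct consequence ``in our setting,'' with no separate proof given. The translation via the identification $(Z_i,x_i)\mapsto\widetilde Z_i\in\partial_{\le q}\cAcodes$, together with Lemma~\ref{assumptions_hold} supplying Assumption~\ref{assum1} with $r=m$, is precisely the intended reduction.
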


In the regime $n \ge \poly(t) \log m$, application of Corollary~\ref{corollary: simplification argument} almost completes the proof. It implies that, for an extremal family $\ff \subset [m]^n$ that does not contain a $(t - 1)$-agreement, there is a restriction $(T, x)$, $|T| = t$, such that $\ff$ is almost fully contained in $\ff[T \to x]$. Applying Lemma~\ref{lemma: unbalanced cross matching}, we will deduce that $\ff \setminus \ff[T \to x]$ is empty.

Next, we move on to the discussion of the regime $m \ge n^{Ct}$ and tools required for its analysis, which were developed in the paper of Kupavskii and the second author~\cite{kupavskii2025lineardependenciespolynomialfactors}.
First, we give some prelimanries concerning sunflowers and $(\cS, s, t)$-systems. Following Definition~8 of~\cite{kupavskii2025lineardependenciespolynomialfactors}, we will name a family $\cS = \{(Z_i, x_i)\}_{i = 1}^l$ obtained from the spread approximation in Lemma~\ref{lemma: spread approximation}.

\begin{definition}
    A family $\mathcal{S} = \{(Z_1, x_1), \ldots, (Z_l, x_l)\}$ where $Z_i \subset [n]$ and $x_i \in [m]^{Z_i}$ is a $\tau$-homogenous core of $\mathcal{F} \subset [m]^n$ if $\mathcal{F}$ admits a decomposition $$\mathcal{F} = \bigsqcup_{S \in \mathcal{S}}\mathcal{F}_S$$ where for all $S = (Z, x) \in \mathcal{S}$ we have $\mathcal{F}_S = \mathcal{F}_S[Z \to x]$ and families $\mathcal{F}_S(Z\to x)$ are $\tau$-homogenous.
\end{definition}

We will sometimes treat $x_i$ as elements of $\partial_{\le n} \cC \subset  \partial_{\le n}{mn \choose n}$. 

To tackle the case $m \ge n^{Ct}$, we will adapt the technique of $(\cS, s, t)$-systems of~\cite{kupavskii2025lineardependenciespolynomialfactors} which was developed for systems without sunflowers. Hence, we need to introduce the definition of a sunflower.
\begin{definition}
    A system of sets $F_1, F_2, \ldots, F_s$ is said to form a sunflower if and only if for all $i, j \in [s], i \neq j$ we have $$F_i \cap F_j = \bigcap_{k \in [s]}F_k.$$
\end{definition}

The set $\bigcap_{k \in [s]}F_k$ is called \textit{core}, and sets $F_1, F_2, \ldots, F_s$ are called \textit{petals}. After that we introduce the definition of $(\mathcal{S}, s, t)$-system.

\begin{definition}
    Let $\mathcal{A}$ be an arbitrary family, $\mathcal{S} \subset \bigcup_{p=t}^q\partial_p\mathcal{A}$ be a family that does not contain a sunflower with $s$ petals and core of size $t-1$. We say that a family $\mathcal{B} \subset \mathcal{A}$ is an $(\mathcal{S}, s, t)$-system, if $\mathcal{B}$ can be decomposed into disjoint families $\mathcal{B}_S \vee \{S\}, S \in \mathcal{S}$ and $\mathcal{B}_S \subset \mathcal{A}(S)$, such that for any $s$ sets $S_1, S_2, \ldots, S_s \in \mathcal{S}$ the following properties hold.

    \begin{itemize}
    \item $S_1, S_2, \ldots, S_s$ do not form a sunflower with the core of size exactly $t-1$;
    \item if $S_1, S_2, \ldots, S_s$ form a sunflower with core of size at most $t-2$, then for any $F_i \in \mathcal{B}_{S_i}, i \in [s]$ we have $|\bigcap_{i=1}^sF_i| \leqslant t - |S_1\cap S_2| - 2$.
    \end{itemize}
\end{definition}

It is easy to check that our problem corresponds to forbidding a sunflower with $2$ petals and the core of size $t - 1$ in a subfamily of $\cC$. Hence, we may apply the main results of~\cite{kupavskii2025lineardependenciespolynomialfactors} on $(\cS, s, t)$-systems:
\begin{enumerate}
	\item the decomposition of a family avoiding $(t - 1)$-intersection into $(\cS, 2, t)$-system;
	\item the simplification of $(\cS, s, t)$-system into a moderately small number of incomplete $t$-stars.
\end{enumerate}

% Having verified that the assumptions hold, we give a reformulation of Lemma 56 from \cite{kupavskii2025lineardependenciespolynomialfactors}. Originally it was designed to be applicable to various domains and families avoiding sunflowers with $s$ petals and core of size $t-1$. However, we use a special case where the domain is $[m]^n$ and the family is some $(t-1)$-avoiding code. We will later apply this lemma to the maximal $(t-1)$-avoiding family of $[m]^n$ to obtain its well-structured subfamily. 

\begin{lemma}[Lemma 56 of~\cite{kupavskii2025lineardependenciespolynomialfactors}]
\label{get_sst}
Let $\ff \subset [m]^n$ be a code avoiding $(t - 1)$-agreement, $t \le n + 1, q \ge t, n\ge q + t - 1$. Let $\mathcal{S}$ be a $\tau$-homogenous core of $\ff$ such that for all $(Z, x) = S \in \mathcal{S}$ we have $|Z| \le q$. Let $\{\ff_S\}_{S\in \mathcal{S}}$ be the corresponding decomposition of $\ff$. Assume that $m > \max\{2^{15}\lceil\log_2n\rceil, 2q\}\cdot 2\alpha^{1-t}\tau^t$ for some $\alpha \in (0, (4n)^{-1}].$ Then for all $(Z,x) = S \in \mathcal{S}$ there exist families $\mathcal{U}_S\subset \mathcal{F}_S = \ff_S[Z \to x]$ such that
\begin{enumerate}
        \item $|\mathcal{U}_S| \ge (1 - 2\alpha n)|\mathcal{F}_S|$ for each $S \in \mathcal{S}$;
        \item $\mathcal{S}$ avoids intersection $t - 1$; if $S_1 = (Z_1, x_1), S_2 = (Z_2, x_2)\in \mathcal{S}$ agree on at most $t - 2$ coordinates, then for any $y_1 \in \mathcal{U}_{S_1}, y_2 \in \mathcal{U}_{S_2}$, we have $\agr_{[n] \setminus (Z_1 \cup Z_2)}(y_1, y_2) \le t - |S_1\cap S_2| - 2$;
        \item let $\tau' \le \tau$ be the minimal homogenity of $\mathcal{F}_S$, then for any $h < n$ we have $$|\partial_h(\mathcal{U}_S)| \ge \left(\frac{\tau'}{1 - 2\alpha n}\right)^{-h}|\partial_h([m]^n(S))|.$$
    \end{enumerate}
\end{lemma}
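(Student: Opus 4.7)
The proof adapts the strategy of Lemma~56 in~\cite{kupavskii2025lineardependenciespolynomialfactors} to the code ambient family $\cAcodes$. The overall approach is to iteratively remove a controlled number of ``bad'' codes from each $\ff_S$ to enforce all three structural properties, invoking the cross-disagreement theory for $\tau$-homogeneous families that is developed in Section~\ref{section: cross-disagreement in homogeneous families}. The numerical condition $m > \max\{2^{15}\lceil\log_2 n\rceil,\,2q\}\cdot 2\alpha^{1-t}\tau^{t}$ is precisely what is required so that the cross-disagreement criterion applies with failure probability $\alpha$ at each invocation.

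First, I would verify that no pair $S_1,S_2\in\cS$ satisfies $|S_1\cap S_2|=t-1$: if two such restrictions existed, every $y_1\in\ff_{S_1}, y_2\in\ff_{S_2}$ would already agree on $t-1$ coordinates inside $Z_1\cap Z_2$. The families $\ff_{S_1}(Z_1\to x_1)$ and $\ff_{S_2}(Z_2\to x_2)$ are $\tau$-homogeneous, and Lemma~\ref{lemma: homogenous avoiding} lets us trim them to further $\tau'$-homogeneous subfamilies whose codes disagree with $x_2$ on $Z_2\setminus Z_1$ (resp.\ with $x_1$ on $Z_1\setminus Z_2$) without losing too much measure, since $m\gg\tau|Z_1\cup Z_2|$. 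Then the cross-disagreement lemma produces $y_1,y_2$ agreeing on nothing outside $Z_1\cup Z_2$, yielding $\agr(y_1,y_2)=t-1$ and contradicting the $(t-1)$-avoiding property of $\ff$.

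Second, for each ordered pair $(S_1,S_2)$ with $j=|S_1\cap S_2|\le t-2$, I would define
\[
    \cB_{S_1,S_2}=\{y_1\in\ff_{S_1}\mid\exists\, y_2\in\ff_{S_2}\;\text{with}\;\agr_{[n]\setminus(Z_1\cup Z_2)}(y_1,y_2)\ge t-j-1\}.
\]
Every such $y_1$ determines a set $H\subset[n]\setminus(Z_1\cup Z_2)$ of size $t-j-1$ and a tuple $z\in[m]^H$ with $y_1|_H=z$ and $\ff_{S_2}[Z_2\cup H\to(x_2,z)]\neq\varnothing$. For each fixed $(H,z)$, apply the cross-disagreement criterion to the $\tau$-homogeneous families $\ff_{S_1}(Z_1\cup H\to(x_1,z))$ and $\ff_{S_2}(Z_2\cup H\to(x_2,z))$, first pruned by Lemma~\ref{lemma: homogenous avoiding} so that codes in them disagree with $x_2$ on $Z_2\setminus Z_1$ and with $x_1$ on $Z_1\setminus Z_2$; if both had measure exceeding $\alpha$, we would find $y_1,y_2$ with $\agr(y_1,y_2)=j+0+0+(t-j-1)=t-1$, again contradicting $(t-1)$-avoidance. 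Hence at each $(H,z)$ at least one of the two restricted measures is at most $\alpha$. Summing over $(H,z)$ via Claim~\ref{claim: averaging argument} and then over pairs $(S_1,S_2)$ using Assumption~\ref{assum2} and the spreadness of $\cAcodes$ bounds $|\cB_{S_1,S_2}|$; the choice $\alpha\le(4n)^{-1}$ makes the cumulative loss at most $2\alpha n|\ff_{S_1}|$.

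The third property follows by setting $\cU_S=\ff_S\setminus\bigcup_{S_2}\cB_{S,S_2}$. The first item is immediate from the measure bound just established. For the shadow bound, observe that removing at most a $(2\alpha n)$-fraction of codes can only worsen the homogeneity by a factor $(1-2\alpha n)^{-1}$ (exactly as in the second half of Lemma~\ref{lemma: homogenous avoiding}); combined with the $\tau'$-homogeneity of $\ff_S(Z\to x)$ and Assumption~\ref{assum2} this yields the claimed bound $|\partial_h(\cU_S)|\ge(\tau'/(1-2\alpha n))^{-h}|\partial_h([m]^n(S))|$ via the standard averaging identity.

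The main obstacle will be the bookkeeping in the second step: the application of cross-disagreement requires the two restricted families to be $\tau$-homogeneous simultaneously with respect to the uniform measure on a smaller cube, and the auxiliary pruning by Lemma~\ref{lemma: homogenous avoiding} must not destroy homogeneity nor measure. Controlling the compound losses from $|H|=t-j-1$ applications of Lemma~\ref{lemma: homogenous avoiding} together with the $\alpha^{1-t}$ slack needed in the cross-disagreement step is precisely what forces the exponential-in-$t$ factor $2\alpha^{1-t}\tau^{t}$ in the hypothesis on $m$.
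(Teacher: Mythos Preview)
The paper does not supply its own proof of this lemma; it is quoted verbatim as Lemma~56 of~\cite{kupavskii2025lineardependenciespolynomialfactors} and used as a black box. So there is nothing in the present paper to compare your argument against.

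That said, your plan diverges from the original proof in the choice of the core tool. You propose to drive the ``if both restricted subfamilies are not tiny, find a cross-disagreement'' step via the hypercontractivity-based machinery of Section~\ref{section: cross-disagreement in homogeneous families} (Lemma~\ref{lemma: cross matching for significant measure} and its relatives). The proof in~\cite{kupavskii2025lineardependenciespolynomialfactors} instead uses the Alweiss--Lovett--Wu--Zhang spread lemma to find cross-matchings in spread families; the factor $2^{15}\lceil\log_2 n\rceil$ in the hypothesis is the fingerprint of that bound, not of hypercontractivity. In particular, Lemma~\ref{lemma: cross matching for significant measure} would demand $m$ at least a power of $\ln(\alpha^{-1})$ with exponent $C_2(2)$ on the order of a few hundred, which is strictly stronger than, and not implied by, the linear-in-$\log n$ hypothesis actually stated. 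So while the overall architecture you sketch (prune bad codes, witness a forbidden $(t-1)$-agreement on leftovers, control total loss by $2\alpha n$) matches the original, your version of the key step cannot recover the lemma with the quantitative assumption as written.

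There is also a local issue in Step~2: the restricted families $\ff_{S_i}(Z_i\cup H\to(x_i,z))$ are not $\tau$-homogeneous in general, so you cannot invoke the cross-disagreement lemma on them directly; in the original, spreadness survives restriction and the ALWZ lemma applies without any homogeneity hypothesis on the restricted slices. Your final paragraph on item~(3) is essentially correct: removing at most a $2\alpha n$ fraction of $\ff_S$ degrades the homogeneity constant to $\tau'/(1-2\alpha n)$, and the shadow bound then follows from the double-counting identity in Assumption~\ref{assum2}.
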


It is easy to check that $\cU_{S}, S \in \cS,$ form a $(\cS, 2, t)$-system. Moreover, the extra property 3 that bounds the shadow of each $\cU_S$ below allows to prove the following lemma.

\begin{lemma}[Proposition 57 of~\cite{kupavskii2025lineardependenciespolynomialfactors}]
\label{get_T}
    Let $\mathcal{U} \subset [m]^n$ be an $(\mathcal{S}, 2, t)$-system. Let $\{\mathcal{U}_S\}_{S \in \mathcal{S}}$ be the decomposition of $\mathcal{U}$ from the definition of an $(\mathcal{S},s, t)$-system and $\lambda$ be a positive real number such that $$|\partial_{t-1}\mathcal{U}_S| \ge \lambda|\partial_{t-1}[m]^n|$$ for every $S\in \mathcal{S}$. Suppose that $m \ge 8q^3$. Then there exists a family $\hat{\mathcal{T}} \subset \partial_t[m]^n$, such that $$|\hat{\mathcal{T}}| \le \frac1\lambda(1 +2\ln(\lambda|\partial_{\le q}[m]^n|))$$ and $$\sum_{S \in \mathcal{S}\setminus \mathcal{S}[\hat{\mathcal{T}}]}|\mathcal{U}_S| \le \frac{32q^3}{\lambda m}\ln(\lambda|\partial_{\le q}[m]^ n|)\cdot m^{n-t}.$$
\end{lemma}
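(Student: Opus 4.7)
My plan is to prove the lemma by a greedy set-cover argument. Initialize $\cS^{(0)} := \cS$ and $\hat{\cT} := \varnothing$; at each step $i \ge 1$, pick $T_i \in \partial_t[m]^n$ maximizing
\[
w_i(T) \;:=\; \sum_{S \in \cS^{(i-1)},\, T \subset S} |\cU_S|,
\]
add $T_i$ to $\hat{\cT}$, and set $\cS^{(i)} := \cS^{(i-1)} \setminus \cS^{(i-1)}[T_i]$. The iteration terminates once the residual mass $M_i := \sum_{S \in \cS^{(i)}}|\cU_S|$ falls below the target threshold $\tfrac{32 q^3}{\lambda m}\ln(\lambda|\partial_{\le q}[m]^n|)\, m^{n-t}$.

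The key combinatorial input is a shadow-disjointness observation derived from the $(\cS,2,t)$-system property: if $S_1, S_2 \in \cS$ satisfy $|S_1 \cap S_2| \le t-2$, then $\partial_{t-1}\cU_{S_1} \cap \partial_{t-1}\cU_{S_2} = \varnothing$. Indeed, a common $R = (W, y_R)$ must have $W \subset [n] \setminus (Z_{S_1} \cup Z_{S_2})$, and any extensions $y_i \in \cU_{S_i}$ with $y_i|_W = y_R$ would give $\agr_{[n] \setminus (Z_{S_1} \cup Z_{S_2})}(y_1, y_2) \ge t-1$, violating the $(\cS,2,t)$-system bound of $t - |S_1 \cap S_2| - 2 \le t-2$. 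Since the $(\cS,2,t)$-system also forbids $|S_1 \cap S_2| = t-1$, any ``independent'' subfamily $\cS^* \subset \cS$ (pairwise $|S \cap S'| \le t-2$) has pairwise disjoint $(t-1)$-shadows, whence $|\cS^*| \le 1/\lambda$ by summing $|\partial_{t-1}\cU_S| \ge \lambda|\partial_{t-1}[m]^n|$.

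From this independence bound I would derive a per-step progress of the form $w_i(T_i) \ge \lambda M_{i-1} - O(q^3/m)\, m^{n-t}$. By averaging over $R \in \partial_{t-1}[m]^n$, some $R^*$ satisfies $\sum_{S \in N(R^*)}|\cU_S| \ge \lambda M_{i-1}$, where the cluster $N(R^*) := \{S \in \cS^{(i-1)} : R^* \in \partial_{t-1}\cU_S\}$ is pairwise super-intersecting by the disjointness lemma. A careful analysis of $N(R^*)$ --- using $|Z_S| \le q$ and the hypothesis $m \ge 8 q^3$ --- should isolate a single $T \in \partial_t[m]^n$ covering most of $N(R^*)$'s weight, up to an additive error $O(q^3 m^{n-t-1})$ from the small ``leftover'' sub-clusters. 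Iterating the recurrence $M_i \le (1 - \lambda) M_{i-1} + O(q^3 m^{n-t-1})$ stabilizes at $M_\infty = O(q^3/(\lambda m)) m^{n-t}$ after $O(\ln(M_0/M_\infty)/\lambda)$ steps; plugging in $M_0 \le |\partial_{\le q}[m]^n|\, m^{n-t}$ yields both conclusions.

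The main obstacle is sharpening the per-step progress. The clean independence bound tells us only that $N(R^*)$ is pairwise $t$-intersecting, but pairwise $t$-intersecting families of sets of size $\le q$ need not be globally $t$-intersecting --- this gap is what drives the $q^3/m$ overhead. A naive pigeonhole over the $\binom{|Z_{S_0}|}{t} \le \binom{q}{t}$ $t$-subrestrictions of an anchor $S_0 \in N(R^*)$ would lose a full $\binom{q}{t}$ factor, so bridging to the stated bound requires a finer shadow-counting argument that exploits both the structural hypothesis $m \ge 8q^3$ and the Assumption~\ref{assum2} shadow identity to keep the loss down to the advertised $q^3/m$.
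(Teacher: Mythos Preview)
The paper does not prove this statement; it is quoted verbatim as Proposition~57 of~\cite{kupavskii2025lineardependenciespolynomialfactors} and used as a black box. So there is no ``paper's own proof'' to compare against, and your proposal has to stand on its own.

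Your skeleton is sound and is almost certainly the right one: greedy cover with weights $w_i(T)=\sum_{S\supset T}|\cU_S|$, plus the shadow--disjointness observation. Your proof that $|S_1\cap S_2|\le t-2$ forces $\partial_{t-1}\cU_{S_1}\cap\partial_{t-1}\cU_{S_2}=\varnothing$ is correct (a common $R=(W,y_R)$ lives on $[n]\setminus(Z_{S_1}\cup Z_{S_2})$ and forces $|F_1\cap F_2|\ge t-1$, contradicting the $(\cS,2,t)$ bound $t-|S_1\cap S_2|-2$), and the resulting bound $|\cS^*|\le 1/\lambda$ for any pairwise $(\le t-2)$--intersecting $\cS^*$ follows exactly as you say.

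The genuine gap is the per--step progress. You want $w_i(T_i)\ge \lambda M_{i-1}-O(q^3)m^{n-t-1}$, but you do not prove it, and the ingredients you list do not give it: knowing that $N(R^*)$ is pairwise $t$--intersecting is far from producing a single $T$ that captures all but an $O(q^3/m)$ additive slice of its weight. Your own diagnosis---pigeonhole on $\binom{|Z_{S_0}|}{t}$ loses $\binom{q}{t}$---is accurate, and ``a finer shadow-counting argument that exploits $m\ge 8q^3$'' is an aspiration, not an argument.

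There is in fact a cleaner way to close the loop that avoids any additive error and that you overlooked: use the simplification theorem (Theorem~\ref{theorem: simplification} / Corollary~\ref{corollary: simplification argument} in this paper) as a dichotomy. Your averaging yields $R^*$ with $\sum_{S\in N(R^*)}|\cU_S|\ge \lambda M_{i-1}$, and $N(R^*)$ is $t$--intersecting. If it is \emph{trivially} $t$--intersecting with common $T$, then $w_i(T)\ge \lambda M_{i-1}$ and the greedy gives $M_i\le(1-\lambda)M_{i-1}$ with no error term. If it is \emph{non--trivially} $t$--intersecting, then since the $\cU_S\vee\{S\}$ are disjoint subsets of $[m]^n[N(R^*)]$, Corollary~\ref{corollary: simplification argument} with $\varepsilon=24q/m$ gives
\[
\lambda M_{i-1}\le \sum_{S\in N(R^*)}|\cU_S|\le |[m]^n[N(R^*)]|\le \frac{24q}{m}\,m^{n-t},
\]
so $M_{i-1}\le \frac{24q}{\lambda m}m^{n-t}$, which is already below the target threshold, and you stop. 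With this dichotomy the recurrence is clean, the number of steps is $\frac{1}{\lambda}\ln(M_0/\text{target})$ with $M_0\le |\partial_{\le q}[m]^n|\,m^{n-t}$, and both stated bounds follow. The hypothesis $m\ge 8q^3$ is more than enough to run Corollary~\ref{corollary: simplification argument} at the required $\varepsilon$.
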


Using spread approximation Lemma~\ref{lemma: spread approximation}, one can decompose a $(t - 1)$-avoiding code $\ff$ into homogeneous families $\ff_S, S \in \cS$, apply Lemma~\ref{get_sst} and then deduce that $\ff$ can be decomposed into relatively small number of incomplete $t$-stars $\ff_T, T \in \widehat{\cT}$ up to some small remainder. This is a quite rich structure which we further refine. One of the key components of this part of our analysis is the Kruskal--Katona theorem for codes.

\begin{theorem}[\cite{frankl_shadows_1988, london1994new}]
\label{KK_direct}
 Suppose $\ff \subset [m]^n$ and $\delta \ge 0$ is defined by $|\ff| = \delta m^n$. Then for all $1\le l \le n$ one has $|\partial_l\ff| \ge \delta^{\frac ln}|\partial_l[m]^n|$.
\end{theorem}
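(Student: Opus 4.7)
The plan is to apply the entropy method, which handles general alphabet size uniformly and matches the claimed exponent on the nose. Let $\bX$ be drawn uniformly from $\ff$, so $H(\bX) = \log|\ff| = \log(\delta m^n)$. For each $Z \in \binom{[n]}{l}$, write $\bX|_Z$ for the restriction of $\bX$ to coordinates in $Z$, and let $\ff|_Z$ denote the projection $\{y|_Z : y \in \ff\}$; by the paper's conventions, $|\partial_l\ff| = \sum_{|Z|=l} |\ff|_Z|$. Trivially $H(\bX|_Z) \le \log|\ff|_Z|$ for each such $Z$.

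I would then combine two averages over a uniformly random $Z$ of size $l$. On one hand, Jensen's inequality applied to $\log$ gives the upper bound
\[
\EE_Z H(\bX|_Z) \;\le\; \EE_Z \log|\ff|_Z| \;\le\; \log \EE_Z |\ff|_Z| \;=\; \log \frac{|\partial_l\ff|}{\binom{n}{l}}.
\]
On the other hand, Shearer's entropy inequality, applied to the collection of all $l$-subsets of $[n]$ (each coordinate lying in exactly $\binom{n-1}{l-1}$ of them), yields $\sum_{|Z|=l} H(\bX|_Z) \ge \binom{n-1}{l-1} H(\bX)$, and thus $\EE_Z H(\bX|_Z) \ge (l/n)\,H(\bX) = (l/n)\log(\delta m^n)$.

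Combining the two estimates gives $\log(|\partial_l\ff|/\binom{n}{l}) \ge (l/n)\log\delta + l\log m$, which upon exponentiation reads
\[
|\partial_l\ff| \;\ge\; \delta^{l/n}\binom{n}{l} m^l \;=\; \delta^{l/n}\,|\partial_l[m]^n|,
\]
as desired. There is essentially no obstacle here: the only non-elementary input is Shearer's inequality, and everything else is Jensen plus arithmetic; an alternative route via compression/shifting to a canonical extremal configuration (as in the Frankl and London references) would also work but would require substantial case analysis that the entropy argument bypasses cleanly.
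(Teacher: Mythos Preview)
Your proof is correct. The entropy argument is clean: Shearer applied to the $\binom{n}{l}$ coordinate sets gives the lower bound $\EE_Z H(\bX|_Z) \ge (l/n)H(\bX)$, Jensen upstairs gives the upper bound in terms of $|\partial_l\ff|$, and the arithmetic closes exactly on $\delta^{l/n}\binom{n}{l}m^l$. The only edge case is $\ff=\varnothing$, where $\delta=0$ and the inequality is vacuous.

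The paper does not prove this statement at all; it is quoted as a known result and attributed to Frankl and to London. Those original proofs proceed by combinatorial shifting/compression rather than entropy, so your route is genuinely different from the cited sources. The entropy method buys you a one-paragraph proof that is insensitive to the alphabet size $m$ and requires no extremal-configuration analysis; the compression approach, by contrast, typically identifies the extremal families explicitly (which your argument does not), at the cost of more casework. For the purposes of this paper, which only ever uses the inequality and never the equality cases, your argument is entirely adequate.
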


However, instead of the above theorem we often use its contraposition.

\begin{corollary}\label{KK_contra}
    Suppose $\ff \subset [m]^n$ and for some integer $l < n$ and some $\delta \ge 0$ we have $|\partial_l\ff| \le \delta |\partial_l[m]^n|$. Then one has $|\ff| \le \delta^{\frac nl}m^n$.
\end{corollary}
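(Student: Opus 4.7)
The plan is to derive Corollary~\ref{KK_contra} directly as the contrapositive of Theorem~\ref{KK_direct}; there is essentially no combinatorial work left to do once Theorem~\ref{KK_direct} is in hand. First I would write $|\ff| = \delta' m^n$ for the unique $\delta' \ge 0$ defined by $\ff$'s density, so that the desired inequality $|\ff| \le \delta^{n/l} m^n$ becomes $\delta' \le \delta^{n/l}$.

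Next, applying Theorem~\ref{KK_direct} to $\ff$ with the parameter $l$ gives
\begin{equation*}
	|\partial_l \ff| \ge (\delta')^{l/n} \, |\partial_l [m]^n|.
\end{equation*}
Combining this with the hypothesis $|\partial_l \ff| \le \delta \, |\partial_l [m]^n|$ and dividing by $|\partial_l [m]^n| = \binom{n}{l} m^l > 0$ (which is nonzero since $1 \le l \le n$) yields $(\delta')^{l/n} \le \delta$. Raising both sides to the power $n/l > 0$ preserves the inequality and gives $\delta' \le \delta^{n/l}$, which is exactly what we wanted. Substituting back, $|\ff| = \delta' m^n \le \delta^{n/l} m^n$.

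The only small points to verify are that $|\partial_l[m]^n|$ is nonzero (it is, as noted above) and the degenerate cases: if $\delta = 0$ then the hypothesis forces $\partial_l \ff = \varnothing$, hence $\ff = \varnothing$ (since $l \le n$ and any element of $\ff$ generates $l$-shadows), giving $|\ff| = 0$ and the inequality holds trivially. There is no real obstacle here; this is pure algebraic manipulation of the Kruskal--Katona inequality stated in Theorem~\ref{KK_direct}.
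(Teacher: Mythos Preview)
Your proof is correct and is exactly the approach the paper intends: the corollary is stated immediately after Theorem~\ref{KK_direct} as its contraposition, with no separate proof given. Your handling of the degenerate cases and the positivity of $|\partial_l[m]^n|$ is fine and completes the trivial algebraic manipulation.
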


Applying Corollary~\ref{KK_contra} together with some geometrical scaling decomposition, we show that an extremal $(t - 1)$-avoiding code $\ff$ is almost fully contained in some $t$-star. Then, we apply Corollary~\ref{KK_contra} again and show that $\ff$ must be a $t$-star, completing the case $m \ge n^{Ct}$.

% \begin{lemma} [Theorem 12 from~\cite{kupavskii_spread_2022}]
%     \label{lemma: approximation is trivial}
%     Let $\varepsilon \in (0, 1]$, $n, \tau, m, q, t \geq 1$ be such that $m \geq 2^{17} \varepsilon^{-1}\tau q \log_{2}q$. Let $\{(Z_i, x_i)| i \in [l]\}$ be such that for all $i$ we have $Z_i \subset [n], |Z_i| \leq q$, $x_i \in [m]^{Z_i}$ and for all $i, j \in [l]$ we have $\agr(x_i|_{Z_i \cap Z_j}, x_j|_{Z_i \cap Z_j}) \geq t$. Moreover, assume that there is no $Z_0 \subset [n]$ and $x_0 \in [m]^{Z_0}$ such that $|Z_0| = t$ and for all $i \in [l]$ we have $Z_0 \subset Z_i$ and $x_0 = x_i|_{Z_0}$. Then
%     $$|\bigcup_{i \in [l]}[m]^n[Z_i \rightarrow x_i]| \leq \varepsilon m^{n-t}.$$
% \end{lemma}

\section{Cross-disagreement in homogeneous families}
\label{section: cross-disagreement in homogeneous families}

    In this section we prove a sufficient condition of existing of a cross-disagreement in homogenous families. The key ingredient of the proof is the following lemma.
	
	\begin{lemma}
		\label{lemma: boosting measure step}
		Let $m, s$ be integer numbers such that $s | m$ and $s \geq 4$. Let $\nu$ be an $s$-balanced product measure on $[m]^n$ and $\mathcal{F}$ be a $\tau$-homogenous family with respect to $\nu$. Then there exists a balanced gluing $\pi \in \Pi_{m, m/s}^{\otimes n}$ such that $$\nu^{\pi}(\ff^{\pi}) \ge \nu^{1-c(\tau)}(\ff),$$ where $$c(\tau) = \frac{\ln \frac{6}{5}}{8 \ln \left(\frac{2^7}{\ln 4}\right) + 4 \ln \tau}.$$
	\end{lemma}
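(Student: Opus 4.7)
The plan is to directly invoke Corollary~\ref{corollary: stab measure upgrade} with $m_1 = m$, $m_2 = m/s$, and $b = 1$ (which is legitimate since $s \mid m$), so it suffices to upper bound $\Stab_{5/6}(\1_\ff)$ by $\nu(\ff)^{1+c(\tau)}$; then the desired inequality $\nu^\pi(\ff^\pi) \ge \nu^2(\ff)/\Stab_{5/6}(\ff) \ge \nu(\ff)^{1-c(\tau)}$ follows. All of the work therefore goes into bounding this stability.

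First I would translate the $\tau$-homogeneity hypothesis into globalness of $\1_\ff$: unwinding the $L_2$ norms,
\[
\|(\1_\ff)_{S \to x}\|_2^2 = \nu_{S \to x}\bigl(\ff(S \to x)\bigr) \le \tau^{|S|} \nu(\ff) = (\sqrt{\tau})^{2|S|} \|\1_\ff\|_2^2,
\]
so $\1_\ff$ is $\sqrt{\tau}$-global in the sense of Theorem~\ref{theorem: hypercontractivity}. Picking $q = 4$, the hypercontractivity threshold becomes $\rho_0 := \frac{\ln 4}{128 \sqrt{\tau}}$. Next, I would choose $t = 2^d$ to be the smallest power of two satisfying $(5/6)^t \le \rho_0$; this ensures that $\|T_{(5/6)^t} \1_\ff\|_4 \le \|\1_\ff\|_2 = \nu(\ff)^{1/2}$, and via Hölder (as indicated in the passage preceding the lemma)
\[
\Stab_{(5/6)^t}(\ff) \le \|T_{(5/6)^t} \1_\ff\|_4 \, \|\1_\ff\|_{4/3} \le \nu(\ff)^{1/2} \nu(\ff)^{3/4} = \nu(\ff)^{5/4}.
\]

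I would then plug this into Proposition~\ref{proposition: smaller rho stability upper bound} to transfer the bound from $\rho = (5/6)^t$ back to $\rho = 5/6$:
\[
\Stab_{5/6}(\ff) \le \|\1_\ff\|_2^{2(1 - 1/t)} \Stab_{(5/6)^t}(\ff)^{1/t} \le \nu(\ff)^{1 - 1/t} \cdot \nu(\ff)^{5/(4t)} = \nu(\ff)^{1 + 1/(4t)}.
\]
Combining with $\nu^\pi(\ff^\pi) \ge \nu^2(\ff)/\Stab_{5/6}(\ff)$ yields $\nu^\pi(\ff^\pi) \ge \nu(\ff)^{1 - 1/(4t)}$, so it only remains to check that $1/(4t) \ge c(\tau)$ for the chosen $t$.

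The final and most delicate step is the bookkeeping for this constant. Because $t$ is the smallest power of two with $(5/6)^t \le \rho_0$, we have $t \le 2 \ln(1/\rho_0)/\ln(6/5) = 2(\ln(2^7/\ln 4) + \tfrac{1}{2}\ln\tau)/\ln(6/5)$, and therefore
\[
\frac{1}{4t} \ge \frac{\ln(6/5)}{8\ln(2^7/\ln 4) + 4\ln\tau} = c(\tau),
\]
which is exactly the quantity in the statement. The main obstacle is simply threading the quantitative constants correctly through hypercontractivity, Hölder, and the power-of-two rounding of $t$; the structural argument is a routine stability-of-intersection chain once the identification $\1_\ff$ is $\sqrt{\tau}$-global is made. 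The condition $s \ge 4$ is used only to guarantee that the target measure remains a well-defined balanced gluing target (in particular $m/s$ is a positive integer at least $1$), and plays no role in the analytic estimates.
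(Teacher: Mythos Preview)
Your proof is correct and follows the paper's approach essentially verbatim: reduce to bounding $\Stab_{5/6}(\ff)$ via Corollary~\ref{corollary: stab measure upgrade}, then use hypercontractivity (Theorem~\ref{theorem: hypercontractivity}) with $q=4$, H\"older, and Proposition~\ref{proposition: smaller rho stability upper bound} with the same power-of-two choice of $t$ to obtain $\Stab_{5/6}(\ff)\le\nu(\ff)^{1+1/(4t)}\le\nu(\ff)^{1+c(\tau)}$. The only cosmetic difference is that you apply hypercontractivity directly at $\rho=(5/6)^t\le\rho_0$, whereas the paper applies it at $\rho_0$ and then invokes monotonicity of $\Stab_\rho$ (via the Efron--Stein decomposition) to pass to $(5/6)^t$; your shortcut is valid and slightly cleaner.
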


    We will apply this lemma iteratively to transform exponentially small measures into constant measures.

	\begin{proof}
		By Corollary \ref{corollary: stab measure upgrade} there exists $\pi \in \Pi_{m, m/s}^{\otimes n}$ such that $\nu^{\pi}(\ff^{\pi}) \ge \frac{\nu^2(\ff)}{\Stab_{5/6}(\ff)}$. Thus, it is sufficient to get an upper bound on $\Stab_{5/6}(\ff)$. By Proposition~\ref{proposition: smaller rho stability upper bound}, an upper bound on $\Stab_{\rho_0}(\ff)$ for any $\rho_0 > 0$ implies an upper bound on $\Stab_{5/6}(\ff)$.
		
		We use Theorem \ref{theorem: hypercontractivity} to bound $\Stab_{\rho_0}(\ff)$ for $\rho_0 = \frac{\ln 4}{2^{7}\tau^{\frac{1}{2}}}$. By Theorem \ref{theorem: hypercontractivity} for $r = \tau^{\frac{1}{2}}, q=4, \rho =\rho_0$, we get $||T_{\rho_0}f||_4 \leq ||f||_2$. By the H\"older inequality, we get
		\begin{align*}
			\Stab_{\rho_0}(f) = \langle f, T_{\rho_0} f \rangle \le \Vert T_{\rho_0} f \Vert_{4} \Vert f \Vert_{4/3} \le \Vert f \Vert_{2} \Vert f \Vert_{4/3} = \nu^{5/4}(\ff).
		\end{align*}
		Let $t$ be a number such that $t = 2^d, d \in \mathbb{N}$ and $$\frac{\ln\left(\frac{2^7}{\ln 4}\right) + \frac{1}{2} \ln \tau}{\ln \frac{6}{5}} \leq t < 2\frac{\ln\left(\frac{2^7}{\ln 4}\right) + \frac{1}{2} \ln \tau}{\ln \frac{6}{5}}.$$
        $\Stab_{\rho}(f)$ is monotone as a function of $\rho$ because $\Stab_{\rho}(f) = \sum_{S \subset [n]}\rho^{|S|}||f^{=S}||_2^2$, where $f^{=S}, S \subset [n],$ is the Efron--Stein decomposition of $f$.\footnote{For more details on the Efron--Stein decomposition, we refer the reader to Chapter  8 of~\cite{o2014analysis}}
		Since $(\frac{5}{6})^t \le\rho_0$, we have $\Stab_{(\frac{5}{6})^t}(f) \leq \Stab_{\rho_0}(f) \leq \nu^{5/4}(\ff)$. By Proposition~\ref{proposition: smaller rho stability upper bound}, we get
		$$\Stab_{5/6}(f) \leq \Vert f \Vert^{2 (1 - 1/t)}_2 \Stab_{(\frac{5}{6})^t}(f)^{1/t} \leq \nu^{1-\frac{1}{t}}(\ff)\nu^{\frac{5}{4t}}(\ff) = \nu^{1+\frac{1}{4t}}(\ff) \leq \nu^{1+c(\tau)}(\ff).$$
		Finally, we combine this bound with the inequality $\nu^{\pi}(\ff^{\pi}) \ge \frac{\nu^2(\ff)}{\Stab_{5/6}(\ff)}$ and get the desired bound $\nu^{\pi}(\ff^{\pi}) \ge \nu^{1-c(\tau)}(\ff)$.
	\end{proof}

    Now we are ready to proof a measure boosting lemma. The lemma is a counterpart of Lemma $6.11$ from \cite{keevash2023forbidden}.
	
	\begin{lemma}
		\label{lemma: boosting measure}
		Let $b \geq 2$ be an integer and $\nu$ be a $b$-balanced product measure on $[m]^n$. Let $\mathcal{F} \subset [m]^n, \nu(\mathcal{F}) = \alpha$. Let $c(\tau)$ be as in Lemma \ref{lemma: boosting measure step},
		$$c_1(\tau) = 4 + \frac{2 \ln \ln 2}{\ln (1 - c(\tau))}$$
		and
		$$c_2(\tau) = -\frac{2}{\ln(1-c(\tau))}.$$
		For any $\tau > 1$ such that
		$$m > b^{c_1(\tau)}\left(\ln \frac{1}{\alpha}\right)^{c_2(\tau)\ln b},$$ and $n > \frac{\ln \alpha^{-1}}{\ln \tau}$ there are $m' \in \mathbb{N}, \pi \in \Pi_{m, m', b}^{\otimes n}, R \subset [n]$ and $x \in [m']^{R}$ such that
		$$m' \geq \frac{m}{ b^{c_1(\tau)}\left(\ln \frac{1}{\alpha}\right)^{c_2(\tau)\ln b}},$$
		$ |R| \le \frac{\ln \alpha^{-1}}{\ln \tau}$, $\ff^{\pi}(R \rightarrow x)$ is $\tau$-homogenous with respect to $\nu^{\pi}_{R\rightarrow x}$ and $\nu^{\pi}_{R\rightarrow x}(\ff^{\pi}(R \rightarrow x)) \ge \frac{1}{2}$.
	\end{lemma}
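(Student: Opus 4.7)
The plan is to iteratively apply the one-step boost of Lemma \ref{lemma: boosting measure step}, interleaving each boost with a fresh $\tau$-homogeneity restriction from Lemma \ref{lemma: homogenous restriction}, and to stop as soon as the measure exceeds $1/2$. Composing the intermediate gluings will yield the desired $\pi$, while unioning the intermediate restriction coordinates will yield $R$ together with a compatible $x$.

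To set up the iteration I would let $\ff_0=\ff$, $\nu_0=\nu$, $m_0=m$, and maintain the invariant that $\nu_i$ is $b$-balanced (this is preserved under balanced gluings by Claim \ref{claim: balanced consistency}, since a $1$-balanced gluing $[m_i]\to[m_i/s_i]$ turns a $b$-balanced measure into a $b$-balanced measure on the smaller alphabet). At step $i$, if $\alpha_i:=\nu_i(\ff_i)<1/2$, I would (a) apply Lemma \ref{lemma: homogenous restriction} to find a maximal restriction $(Z_i,y_i)$ for which $\ff_i(Z_i\to y_i)$ is $\tau$-homogeneous with respect to $\nu_{i,Z_i\to y_i}$ (boosting the measure by a factor $\ge\tau^{|Z_i|}$), and then (b) apply Lemma \ref{lemma: boosting measure step} with some $s_i\ge\max(b,4)$ dividing $m_i$ to obtain a balanced gluing $\pi_i$ raising the current measure to the power $1-c(\tau)$. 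Note that since $\nu_i$ is $b$-balanced it is automatically $s_i$-balanced for any $s_i\ge b$, so the hypothesis of Lemma \ref{lemma: boosting measure step} is met.

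For the analysis I set $\Phi_i:=\ln(1/\alpha_i)$; then $\Phi_{i+1}\le(1-c(\tau))\Phi_i$, so the loop terminates after at most $K=\lceil\log_{1/(1-c(\tau))}(\ln(1/\alpha)/\ln 2)\rceil$ iterations. To bound $|R|$, I would observe that gluings never decrease the measure (Claim \ref{claim: measure consistency after a gluing}) and each restriction multiplies it by at least $\tau^{|Z_i|}$, so telescoping gives $\tau^{|R|}\alpha\le\alpha_{\mathrm{final}}\le 1$, yielding $|R|\le\ln(1/\alpha)/\ln\tau$ (the hypothesis $n>\ln(1/\alpha)/\ln\tau$ ensures there is room in $[n]$ for this restriction). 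For the alphabet, $m'=m/\prod_{i<K} s_i\ge m/s^{K}$ with $s=\max(b,4)$; substituting the bound on $K$ and doing routine algebra with the defining expressions of $c_1(\tau)$ and $c_2(\tau)$ produces the stated denominator $b^{c_1(\tau)}(\ln(1/\alpha))^{c_2(\tau)\ln b}$, where the $b^{c_1(\tau)}$ factor generously absorbs the constant $s=\max(b,4)$ and the $\ln 2$-term from the ceiling in $K$. Finally, since the very last operation executed is the homogenization step (a) (we exit with $\alpha_i\ge1/2$ immediately after it), the output family is $\tau$-homogeneous as required, and composing the $\pi_i$'s into a single balanced gluing $\pi\in\Pi_{m,m',b}^{\otimes n}$ and collecting the $(Z_i,y_i)$'s into a single pair $(R,x)$ (translated through subsequent gluings) produces the desired output.

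The main obstacle is the bookkeeping: (i) ensuring the divisibility $s_i\mid m_i$ is maintained at every step while keeping $\nu_i$ a product measure in the sense of Definition \ref{definition: b-balanced measure} — this is exactly what the hypothesis $m>b^{c_1(\tau)}(\ln(1/\alpha))^{c_2(\tau)\ln b}$ is calibrated to guarantee, so that no $m_i$ shrinks below the threshold $s_i$ before the process ends; and (ii) assembling the interleaved sequence of restrictions and gluings into a single $(\pi,R,x)$ on the original space $[m]^n$, which requires pushing the earlier-stage restriction values $y_i$ forward through the later gluings $\pi_{i+1},\ldots,\pi_K$ in order to express them in the final alphabet $[m']$. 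Both are mechanical once set up, but the constants $c_1(\tau)$ and $c_2(\tau)$ have to be chosen to absorb precisely the resulting slack.
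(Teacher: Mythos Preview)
Your overall plan---iterate Lemma~\ref{lemma: boosting measure step} interleaved with the homogenization of Lemma~\ref{lemma: homogenous restriction}, track the geometric decay of $\ln(1/\alpha_i)$, and bound $|R|$ by telescoping $\tau^{|R|}\alpha\le 1$---is exactly the paper's approach. Two points in your execution are off, however.

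\textbf{Divisibility.} Your loop requires some $s_i\ge\max(b,4)$ dividing $m_i$, but if $m$ is prime this already fails at step~0. The paper's fix is not purely bookkeeping: it first applies an arbitrary $b$-balanced gluing $\pi_0\in\Pi_{m,m_0,b}^{\otimes n}$ onto the largest power $m_0=b^k\le m$, after which every $m_i=m_0/b^{2i}$ is a power of $b$ and divisibility is automatic. The price is that $\nu^{\pi_0}$ is only $b^2$-balanced (by Claim~\ref{claim: balanced consistency}), so Lemma~\ref{lemma: boosting measure step} must be invoked with $s=b^2$ rather than $s=b$; this is precisely the origin of the factor $2$ in $c_2(\tau)=-2/\ln(1-c(\tau))$ and of the $4$ in $c_1(\tau)$. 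Your claimed invariant ``$\nu_i$ is $b$-balanced throughout'' cannot survive this initial step, so the constants do not come out the way you suggest.

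\textbf{Order of operations.} With your loop written as ``check $\alpha_i<1/2$; if so do (a) then (b)'', the object produced when you exit is the result of a \emph{boost} step, not a homogenization step, so it is not $\tau$-homogeneous---your sentence ``the very last operation executed is the homogenization step (a)'' contradicts the loop you wrote. The paper instead homogenizes once before the loop and then runs ``check; boost; homogenize'', so the output at termination is always the result of a homogenization and hence $\tau$-homogeneous. Either reorder your loop or append a final homogenization; the $|R|$ bound still holds since homogenization only increases the measure.
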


    The main idea of the proof it to apply iteratively Lemma \ref{lemma: boosting measure step}.

\begin{proof}
	Let $m_0$ be such a number, that $m_0 = b^k$ for some $k \in \mathbb{N}$ and $m/b < m_0 \leq m$. Let $\pi_0 \in \Pi_{m, m_0, b}^{\otimes n}$ be an arbitrary $b$-balanced gluing, $\nu'_0 = \nu^{\pi_0}$ and $\mathcal{F}'_0 = \mathcal{F}^{\pi_0}$. Note, that $\nu'_0$ is $b^2$-balanced, sunce $\nu$ is $b$-balanced and $\pi_0$ is $b$-balanced. Let $Z_0 \subset [n]$ be a maximal set such that for some $r_0 \in [m_0]^{Z_0}$ we have
	$(\nu'_0)_{Z_0 \to r_0}(\mathcal{F}'_0(Z_0 \to r_0)) \ge \tau^{|Z_0|} \nu'_0(\mathcal{F}'_0)$. Let $\nu_0 = (\nu'_0)_{Z_0 \to r_0}$ and $\mathcal{F}_0 = \mathcal{F}'_0(Z_0 \to r_0)$. By Lemma \ref{lemma: homogenous restriction}, the family $\mathcal{F}_0$ is $\tau$-homogenous with respect to $\nu_0$.
	
	For $i \geq 1$ we iteratively construct tuples $(m_i, \pi_i, \nu_i, Z_i, r_i, \mathcal{F}_i)$ such that $m_i = \frac{m_0}{b^{2i}}$, $\pi_i \in \Pi_{m, m_i, b}^{\otimes n}$, $Z_i \subset [n]$, $r_i \in [m_i]^{Z_i}$, $\nu_i = \nu^{\pi_i}_{Z_i\rightarrow r_i}$, $\mathcal{F}_i = \mathcal{F}^{\pi_i}(Z_i \to r_i)$,
	$$\nu_i(\mathcal{F}_i) \geq \max(\tau^{|Z_i|}\nu(\mathcal{F}), \nu(\mathcal{F})^{\left(1-c(\tau)\right)^i})$$
	and $\mathcal{F}_i$ is $\tau$-homogenous with respect to $\nu_i$. Note that $(m_0, \pi_0, \nu_0, Z_0, r_0, \mathcal{F}_0)$ satisfies the conditions. We construct the tuples by the following iterative procedure.
	
	\begin{enumerate}
		\item If $\nu_{i-1}(\mathcal{F}_{i-1}) \geq \frac{1}{2}$ or $m_{i-1} \leq b$, stop.
		\item Let $m_i = \frac{m_{i-1}}{b^2}$. Note that $b^2 | m_{i-1}$, since $m_{i-1}$ is an integer power of $b$ and $m_{i-1} > b$. Let $\pi'_i \in \Pi_{m_{i-1}, m_i}^{\otimes ([n]\setminus Z_{i-1})}$ be a gluing obtained by Lemma \ref{lemma: boosting measure step} with $s = b^2$ and $\nu = \nu_{i - 1}$. Define $\nu'_i = \nu_{i-1}^{\pi'_i}$ and $\mathcal{F}'_i = \mathcal{F}_{i-1}^{\pi'_i}$. By Lemma \ref{lemma: boosting measure step}, we get $\nu'_i(\mathcal{F}'_i) \geq \nu_{i-1}(\mathcal{F}_{i-1})^{1-c(\tau)}$.
		\item Consider an arbitrary balanced gluing $\pi_i'' \in \Pi_{m_{i-1}, m_i}^{\otimes Z_{i-1}}$. Let $\pi_i = (\pi_i' \otimes \pi_i'') \circ \pi_{i-1}$, $r''_i = \pi_i''(r_{i-1})$, $\mathcal{F}''_i = \mathcal{F}^{\pi_i}(Z_{i-1} \to r''_i)$. Since $\mathcal{F}'_i \subset \mathcal{F}''_i$, we get $\nu'_i(\mathcal{F}''_i) \geq \nu_{i-1}(\mathcal{F}_{i-1})^{1-c(\tau)}$. Moreover, $\pi_i$ is $b$-balanced, because $\pi_{i-1}$ is $b$-balanced and $(\pi_i' \otimes \pi_i'')$ is $1$-balanced.
		\item Let $Z'_i \subset [n] \setminus Z_{i - 1}$ be maximal such that for some $r'_i \in [m_i]^{Z'_i}$ we have
        \begin{align}
        \label{eq: definition of Z prime-i}
            (\nu'_i)_{Z'_i \to r_i'}(\mathcal{F}''_i(Z'_i \to r'_i)) \ge \tau^{|Z'_i|} \nu'_i(\mathcal{F}''_i).    
        \end{align}
		 Let $\nu_i = (\nu'_i)_{Z'_i \to r'_i}$ and $\mathcal{F}_i = \mathcal{F}''_i(Z'_i \to r'_i)$. By Lemma \ref{lemma: homogenous restriction}, the family $\mathcal{F}_i$ is $\tau$-homogenous with respect to $\nu_i$. Finally, let $Z_i = Z_{i-1}\sqcup Z'_i$ and $r_i = (r'_{i}, r''_{i})$.
	\end{enumerate}

	By construction, we get $m_i = \frac{m_0}{b^{2i}}$, $\pi_i \in \Pi_{m, m_i, b}^{\otimes n}$, $Z_i \subset [n]$, $r_i \in [m_i]^{Z_i}$, $\nu_i = \nu^{\pi_i}_{Z_i\rightarrow r_i}$, $\mathcal{F}_i = \mathcal{F}^{\pi_i}(Z_i \to r_i)$. By induction,
	$$\nu_i(\mathcal{F}_i) \geq \nu'_i(\mathcal{F}''_i) \geq \nu_{i-1}(\mathcal{F}_{i-1})^{1-c(\tau)} \geq \left(\nu(\mathcal{F})^{\left(1-c(\tau)\right)^{i-1}}\right)^{1-c(\tau)} = \nu(\mathcal{F})^{\left(1-c(\tau)\right)^i}$$
	and
	$$\nu_i(\mathcal{F}_i) \geq \tau^{|Z'_i|}\nu'_i(\mathcal{F}''_i) \geq \tau^{|Z'_i|}\nu_{i-1}(\mathcal{F}_{i-1}) \geq \tau^{|Z'_i|}\tau^{|Z_{i-1}|}\nu(\mathcal{F}) = \tau^{|Z_i|}\nu(\mathcal{F}).$$

    Let $I$ be the value of $i$ when the procedure terminates. Let $m' = m_{I-1}, \pi = \pi_{I-1}, R = Z_{I-1}, x = r_{I-1}$.  If $I = 1$, then $m_0 \ge m/b^2 > b$, so $\nu_0(\ff_0) \ge 1/2$.  If $I \ge 2$, we will show that the procedure terminates by the condition $\nu_{I - 1}(\ff_{I - 1}) \ge 1/2$ as well.
	 Since $\alpha^{\left(1-c(\tau)\right)^{I-2}} \leq \nu_{I-2}(\mathcal{F}_{I-2}) < \frac{1}{2} $, we get $I \leq 2 + \frac{\ln\ln 2}{\ln(1 - c(\tau))} - \frac{\ln \ln \frac{1}{\alpha}}{\ln(1 - c(\tau))}$. Therefore, $$m_{I-1} \geq \frac{m}{b^{2I-1}} = b\frac{m}{b^{2I}} \geq b\frac{m}{b^{c_1(\tau)}\left(\ln \frac{1}{\alpha}\right)^{c_2(\tau)\ln b}} > b.$$
	Thus, we indeed have $$\nu^{\pi}_{R\rightarrow x}(\ff^{\pi}(R \rightarrow x)) = \nu_{I-1}(\mathcal{F}_{I-1}) \geq \frac{1}{2}.$$
	It remains to bound $|R|$. Definition~\eqref{eq: definition of Z prime-i} ensures that $ \tau^{|R|} \alpha \le \nu_{I-1}(\mathcal{F}_{I-1}) \leq 1$, so $|R| \le \frac{\ln \alpha^{-1}}{\ln \tau}$.
\end{proof}

We are ready to prove a sufficient condition on existing of cross-disagreement in homogenous families.

\begin{lemma}
	\label{lemma: cross matching for significant measure}
	Let $\mathcal{F}_1 \subset [m]^{[n]\setminus H_1}, \mathcal{F}_2 \subset [m]^{[n]\setminus H_2}$. Let $b \geq 2$ and $\nu$ be a $b$-balanced product measure on $[m]^n$. Let $\alpha_i = \nu_{[n]\setminus H_i}(\mathcal{F}_i)$. Let $\mathcal{F}_2$ be $\tau$-homogenous with respect to $\nu_{[n] \setminus H_2}$. Let
	$$C_1(b) = \frac{4}{\ln 2}b^{3c_1(2)+4}, \qquad C_2(b) = 2c_2(2)\ln b + 1,$$
	where $c_1(\tau)$ and $c_2(\tau)$ are from Lemma \ref{lemma: boosting measure}. If
	$$m > \tau C_1(b) \left(\max\left(1, \ln(\alpha_1^{-1})\right)\max\left(1, \ln(2\alpha_2^{-1})\right)\right)^{C_2(b)}$$
	and $n > \frac{\ln(\alpha_1^{-1})+\ln(2\alpha_2^{-1})}{\ln 2} + |H_1| + |H_2|$, then there exist $x_i \in \mathcal{F}_i$ such that $\agrparam{[n]\setminus (H_1 \cup H_2)}{x_1}{x_2} = 0$. 
    %$\agr(x_1|_{[n]\setminus (H_1 \cup H_2)}, x_2|_{[n]\setminus (H_1 \cup H_2)}) = 0$
\end{lemma}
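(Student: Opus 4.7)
My plan is to reduce to a cross-disagreement in the full product space $[m]^n$ and apply the measure-boosting argument from the preceding subsection twice --- once for each family --- finishing with Lemma~\ref{lemma: hoffman bound}. First, I would lift each $\mathcal{F}_i$ to $\tilde{\mathcal{F}}_i := \mathcal{F}_i \times [m]^{H_i} \subset [m]^n$. Then $\nu(\tilde{\mathcal{F}}_i) = \alpha_i$, and $\tilde{\mathcal{F}}_2$ inherits $\tau$-homogeneity with respect to $\nu$: any restriction touching an $H_2$-coordinate leaves the conditional measure unchanged, so the bound $\tau^{|S|}\alpha_2$ survives. A cross-disagreement of $\tilde{\mathcal{F}}_1, \tilde{\mathcal{F}}_2$ on $[n]$ immediately gives a cross-disagreement of $\mathcal{F}_1, \mathcal{F}_2$ on $[n]\setminus(H_1\cup H_2)$ by restricting the witness to those coordinates, so it suffices to produce the former.

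Next, I would apply Lemma~\ref{lemma: boosting measure} to $\tilde{\mathcal{F}}_1$ with target homogeneity $2$ and the $b$-balanced $\nu$. This produces a gluing $\pi$, a restriction $(R, x)$ with $|R| \le \log_2\alpha_1^{-1}$, a new alphabet $m'$, and a $2$-homogeneous family $\mathcal{G}_1 := \tilde{\mathcal{F}}_1^{\pi}(R \to x) \subset [m']^{n\setminus R}$ of measure $\ge 1/2$ under the $b^2$-balanced measure $\tilde\nu := \nu^{\pi}_{R \to x}$ (by Claim~\ref{claim: balanced consistency}). For the second family, I would set $X_i := \pi_i^{-1}(x_i) \subset [m]$ and invoke Lemma~\ref{lemma: homogenous avoiding}, using the $\tau$-homogeneity of $\tilde{\mathcal{F}}_2$ to extract a subfamily $\mathcal{F}_2' \subset \tilde{\mathcal{F}}_2$ of $\nu$-measure $\ge (1 - o(1))\alpha_2$ that avoids every $X_i$ on $R$-coordinates and is still $\tau/(1-o(1))$-homogeneous. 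Gluing by $\pi$ and projecting onto $n\setminus R$ yields $\mathcal{G}_2 \subset [m']^{n\setminus R}$ of $\tilde\nu$-measure $\ge (1 - o(1))\alpha_2$, with the key feature that every preimage lifts to a code whose $\pi$-image disagrees with $x$ pointwise on $R$. A second application of Lemma~\ref{lemma: boosting measure}, now to $\mathcal{G}_2$ in the $b^2$-balanced $\tilde\nu$, produces a further gluing $\pi'$, restriction $(R', x')$ with $|R'| \le \log_2(2\alpha_2^{-1})$, alphabet $m''$, and a $2$-homogeneous family $\mathcal{G}_2^* \subset [m'']^{(n\setminus R)\setminus R'}$ of measure $\ge 1/2$ under the $b^4$-balanced $\tilde\nu' := \tilde\nu^{\pi'}_{R'\to x'}$. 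Symmetrically, the $2$-homogeneity of $\mathcal{G}_1$ combined with Lemma~\ref{lemma: homogenous avoiding} on $R'$ yields $\mathcal{G}_1^* \subset [m'']^{(n\setminus R)\setminus R'}$ of measure $\ge (1-o(1))/2$ whose gluings disagree with $x'$ pointwise on $R'$.

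Finally, both $\mathcal{G}_i^*$ sit in $[m'']^{(n\setminus R)\setminus R'}$ with measures bounded away from $0$ against the $b^4$-balanced $\tilde\nu'$, so Lemma~\ref{lemma: hoffman bound} with $\lambda = b^4/m''$ (which is at most $1/2$ provided $m'' \ge 2b^4$) produces the desired cross-disagreement between $\mathcal{G}_1^*$ and $\mathcal{G}_2^*$ on $(n\setminus R)\setminus R'$. Unwinding through the two gluings and the two avoidance constructions gives a cross-disagreement of $\tilde{\mathcal{F}}_1$ and $\tilde{\mathcal{F}}_2$ on $[n]$, which is exactly what is needed. The main technical burden will be bookkeeping: checking that the balance parameter only multiplies by $b$ at each gluing (Claim~\ref{claim: balanced consistency}); verifying that the homogeneity degradations from Lemma~\ref{lemma: homogenous avoiding} remain bounded whenever the stated $m$-bound holds; and showing that the two boosting conditions $m > b^{c_1(2)}(\ln\alpha_1^{-1})^{c_2(2)\ln b}$ and $m' > b^{2c_1(2)}(\ln(2\alpha_2^{-1}))^{2c_2(2)\ln b}$, together with the Hoffman condition $m'' > 2b^4$ and the $n$-conditions $n > \log_2\alpha_1^{-1}$ and $n - |R| > \log_2(2\alpha_2^{-1})$ coming from each boosting step, are all subsumed by the single bound $m > \tau\, C_1(b)\, \bigl(\max(1, \ln\alpha_1^{-1})\,\max(1, \ln(2\alpha_2^{-1}))\bigr)^{C_2(b)}$ stated in the lemma.
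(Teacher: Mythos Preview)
Your proposal is correct and follows essentially the same architecture as the paper: two rounds of Lemma~\ref{lemma: boosting measure} (first on $\mathcal{F}_1$, then on the glued/restricted $\mathcal{F}_2$), each followed by an application of Lemma~\ref{lemma: homogenous avoiding} to force disagreement on the newly restricted coordinates, finishing with Lemma~\ref{lemma: hoffman bound}. The only cosmetic differences are that the paper keeps the coordinate sets $[n]\setminus H_i$ separate throughout (reconciling them in a final Step~5 via Claim~\ref{claim: averaging argument}) rather than lifting to $[m]^n$ at the outset, and fixes a specific value on $R$ by averaging rather than projecting onto $[n]\setminus R$; both variants work and lead to the same bookkeeping you describe.
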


The formal proof of Lemma \ref{lemma: cross matching for significant measure} is quite technical; therefore, we give its sketch first. We make several modifications of families $\mathcal{F}_1$ and $\mathcal{F}_2$ to get families with much bigger measure on the same groud set $[m']^{n'}$ with $m' \ll m$, such that if the modified families contain cross-disagreement than the initial families also contain cross-disagreement. Then we use Lemma \ref{lemma: hoffman bound} to find cross-disagreement in the modified families. The proof is divided into the following steps.

\textbf{Step $1$.} We use Lemma \ref{lemma: boosting measure} to get a restriction $R \rightarrow x$ and a gluing $\pi$ such that $\mathcal{F}'_1 = \ff_1^{\pi}(R \rightarrow x)$ has $\nu^{\pi}_{R \rightarrow x}$-measure at least $\frac{1}{2}$.

\textbf{Step $2$.} We erase from $\mathcal{F}_2$ sets, that agree after the gluing $\pi$ with $x$ on coordinates in $R$. Lemma \ref{lemma: homogenous avoiding} implies that the measure of $\mathcal{F}_2$ decreases insignificantly. We denote by $\mathcal{F}'_2$ sets from $\mathcal{F}_2$, that were not erased in this step.

\textbf{Step $3$.} We transform the family $\mathcal{F}'_2 \subset [m]^{[n] \setminus H_2}$ into a family $\mathcal{F}''_2 \subset [m']^{[n] \setminus (H_2 \cup R)}$ without decreasing measure. First, using Claim \ref{claim: averaging argument}, we find a restriction on $R \setminus H_2$ that does not decrease measure. Second, we apply a gluing $\pi$ obtained in the first step to this restriction. Claim \ref{claim: measure consistency after a gluing} ensures that the gluing does not decrease measure. Note, that in this step we lose homogenity of $\mathcal{F}_2$. However, Lemma \ref{lemma: boosting measure} ensures homogenity of $\mathcal{F}'_1$.

\textbf{Step $4$.} We repeat steps $1$-$3$, switching families. That is, we boost the measure of $\mathcal{F}''_2$ and ensure that the measure of $\mathcal{F}'_1$ decreases insignificantly. After this step, we have families $\mathcal{F}'''_1 \subset [m'']^{[n] \setminus (H_1 \cup R \cup R')}$ and $\mathcal{F}'''_2 \subset [m'']^{[n] \setminus (H_2 \cup R \cup R')}$ with constant measures such that the existence of cross-disagreement in $\mathcal{F}'''_1$ and $\mathcal{F}'''_1$ implies the existence of cross-disagreement in the initial families.

\textbf{Step $5$.} At this step, the families $\mathcal{F}'''_1$ and $\mathcal{F}'''_2$ are in different sets of coordinates. We use Claim \ref{claim: averaging argument} to obtain restrictions $\mathcal{F}''''_1$ and $\mathcal{F}''''_2$ on the same ground set without decreasing the measure.

\textbf{Step $6$.} We use Lemma \ref{lemma: hoffman bound} to find the cross-disagreement in $\mathcal{F}''''_1$ and $\mathcal{F}''''_2$.

Next, we provide a formal proof of Lemma \ref{lemma: cross matching for significant measure}.

\begin{proof}
	Let $\beta_1 = \min(\alpha_1, e^{-1}), \beta_2 = \min(\frac{\alpha_2}{2}, e^{-1})$. Then, we have $\nu_{[n]\setminus H_1}(\mathcal{F}_1) \geq \beta_1, \nu_{[n]\setminus H_2}(\mathcal{F}_2) \geq 2\beta_2, \ln(\beta_i^{-1}) \geq 1$, and the requirement on $m$ is equivalent to
	$$m > \tau C_1(b) \left(\ln(\beta_1^{-1}) \ln(\beta_2^{-1})\right)^{C_2(b)}.$$

    \textbf{Step $1$.}
	By Lemma \ref{lemma: boosting measure} for $\mathcal{F}=\mathcal{F}_1$ and $\tau = 2$, we obtain $m' \in \mathbb{N}, \pi_{-H_1} \in \Pi_{m, m', b}^{\otimes [n] \setminus H_1}, R \subset [n]\setminus H_1$ and $x \in [m']^{R}$ such that
	$$m' \geq \frac{m}{ b^{c_1(2)}\left(\ln \frac{1}{\beta_1}\right)^{c_2(2)\ln b}} \geq \frac{4}{\ln 2}\tau b^{2c_1(2)+4} \left(\ln \left(\beta_2^{-1}\right)\right)^{2c_2(2)\ln b + 1}\left(\ln \beta_1^{-1}\right),$$
	$|R| \le \frac{\ln \alpha_1^{-1}}{\ln 2}$, $\ff_1^{\pi_{-H_1}}(R \rightarrow x)$ is $2$-homogenous with respect to $(\nu_{[n]\setminus H_1})^{\pi_{-H_1}}_{R\rightarrow x}$ and $$(\nu_{[n]\setminus H_1})^{\pi_{-H_1}}_{R\rightarrow x}(\ff_1^{\pi_{-H_1}}(R \rightarrow x)) \ge \frac{1}{2}.$$ Let $\pi \in \Pi_{m, m', b}^{\otimes n}$ be arbitrary such that $\pi_{[n]\setminus H_1} = \pi_{-H_1}$. Let $\mathcal{F}'_1 = \ff_1^{\pi}(R \rightarrow x)$ and $\nu' = \nu^{\pi}_{[n]\setminus R}$.

    \textbf{Step $2$.}
	Since $\pi$ is $b$-balanced, we get $|\pi_{i}^{-1}(x_i)| \leq \frac{bm}{m'}$ for every $i \in [n]$. Let $\mathcal{F}'_2$ be the faimily of sets that do not agree after the gluing $\pi$ with $x$ on coordinates in $R$, i. e. $$\mathcal{F}'_2 = \{y \in \mathcal{F}_2 | \forall z \in \pi_{R \setminus H_2}^{-1}(x|_{R \setminus H_2}) \, \agr_{R \setminus H_2}(z, y) = 0\}.$$ By Lemma \ref{lemma: homogenous avoiding}, we get
	$$\nu_{[n]\setminus H_2}(\mathcal{F}'_2) \geq \left(1 - \frac{|R|\frac{bm}{m'} \tau b}{m}\right) \nu_{[n]\setminus H_2}(\mathcal{F}_2) \geq \left(1 - \frac{\tau b^2 \ln \alpha_1^{-1}}{m' \ln 2}\right)\alpha_2 \geq \frac{1}{2}\alpha_2.$$
	Since $\beta_2 \leq \frac{1}{2}\alpha_2$, we have $\nu_{[n]\setminus H_2}(\mathcal{F}'_2) \geq \beta_2$.

    \textbf{Step $3$.}
	Take arbitrary $y \in [m']^{R\setminus H_2}$ such that $\nu^{\pi}_{[n] \setminus (R \cup H_2)}({\mathcal{F}'_2}^{\pi}(R\setminus H_2 \rightarrow y)) \geq \nu_{[n]\setminus H_2}^{\pi}({\mathcal{F}'_2}^{\pi})$. Note, that such $y$ exists by  Claim~\ref{claim: averaging argument}, and $\agr(x, y) = 0$ because for all $y$ with $\agr(x, y) \neq 0$ we have ${\mathcal{F}'_2}^{\pi}(R \setminus H_2 \rightarrow y) = \emptyset$. Let $\mathcal{F}''_2 = {\mathcal{F}'_2}^{\pi}(R \setminus H_2 \rightarrow y)$. By Claim \ref{claim: measure consistency after a gluing}, we get
	$$\nu'_{[n] \setminus (R \cup H_2)}(\mathcal{F}''_2) = \nu^{\pi}_{[n] \setminus (R \cup H_2)}(\mathcal{F}''_2) \geq \nu^{\pi}_{[n]\setminus H_2}({\mathcal{F}'_2}^{\pi}) \geq \nu_{[n]\setminus H_2}(\mathcal{F}'_2) \geq \frac{1}{2}\alpha_2 \geq \beta_2.$$
	For any $x' \in \mathcal{F}'_1, y' \in \mathcal{F}''_2$ there exist $x \in \mathcal{F}_1, y \in \mathcal{F}_2$ such that
    $$\agrparam{[n]\setminus (H_1 \cup H_2)}{\pi(x)}{\pi(y)} = \agrparam{[n]\setminus (H_1 \cup H_2 \cup R)}{x'}{y'}$$
    %$$\agr(\pi(x)|_{[n]\setminus (H_1 \cup H_2)}, \pi(y)|_{[n]\setminus (H_1 \cup H_2)}) = \agr(x'|_{[n]\setminus (H_1 \cup H_2 \cup R)}, y'|_{[n]\setminus (H_1 \cup H_2 \cup R)})$$
    and, therefore,
    $$\agrparam{[n]\setminus (H_1 \cup H_2)}{x}{y} \leq \agrparam{[n]\setminus (H_1 \cup H_2 \cup R)}{x'}{y'}.$$
    %$$\agr(x|_{[n]\setminus (H_1 \cup H_2)}, y|_{[n]\setminus (H_1 \cup H_2)}) \leq \agr(x'|_{[n]\setminus (H_1 \cup H_2 \cup R)}, y'|_{[n]\setminus (H_1 \cup H_2 \cup R)}).$$
    Thus, it is sufficient to find $x' \in \mathcal{F}'_1, y' \in \mathcal{F}''_2$ with 
    $\agrparam{[n]\setminus (H_1 \cup H_2 \cup R)}{x'}{y'} = 0$.
    %$\agr(x'|_{[n]\setminus (H_1 \cup H_2 \cup R)}, y'|_{[n]\setminus (H_1 \cup H_2 \cup R)}) = 0$

    \textbf{Step $4$.}
	Since $\nu$ is $b$-balanced and $\pi \in \Pi_{m, m', b}^{\otimes n}$, by Claim \ref{claim: balanced consistency}, $\nu^{\pi}$ is $b^2$-balanced and therefore $\nu'$ is $b^2$-balanced. We apply Lemma \ref{lemma: boosting measure} for $\mathcal{F} = \mathcal{F}''_2$, $b=b^2$, $\nu = \nu'$ and $\tau = 2$ and obtain $m'' \in \mathbb{N}, \pi'_{-H_2} \in \Pi_{m', m'', b^2}^{\otimes [n] \setminus (R \cup H_2)}, R' \subset [n] \setminus (R \cup H_2)$ and $y' \in [m'']^{R'}$ such that
	$$m'' \geq \frac{m'}{ b^{2c_1(2)}\left(\ln(\beta_2^{-1})\right)^{2c_2(2)\ln b}} \geq \frac{4}{\ln 2} b^{4}\ln(\beta_2^{-1}),$$
	$|R'| < \frac{\ln \left(2\alpha_2^{-1}\right)}{\ln 2}$, and $(\nu')^{\pi'_{-H_2}}_{R'\rightarrow y'}({\mathcal{F}_2''}^{\pi'_{-H_2}}(R' \rightarrow y')) \ge \frac{1}{2}$. Let $\pi' \in \Pi_{m, m', b}^{\otimes [n] \setminus R}$ be arbitrary such that $\pi'_{[n]\setminus (R \cup H_2)} = \pi'_{-H_2}$. Let $\mathcal{F}'''_2 = {\mathcal{F}_2''}^{\pi'}(R' \rightarrow y')$ and $\nu'' = \nu'^{\pi'}_{R' \rightarrow y'} = \nu'^{\pi'}_{[n] \setminus (R \cup R')}$.
	
	Since $\pi'$ is $b^2$-balanced, we get $|(\pi'_{i})^{-1}(y'_i)| \leq \frac{b^2m'}{m''}$ for all $i \in R'$. Let $\mathcal{F}''_1$ be the family of sets from $\ff'_1$ that do not agree after the gluing $\pi'$ with $y'$ on coordinates in $R'$, i. e.
    $$\mathcal{F}''_1 = \{x \in \mathcal{F}'_1 | \forall z \in \left(\pi_{R'\setminus H_1}'\right)^{-1}(y') \, \agrparam{R'\setminus H_1}{z}{x} = 0\}.$$
    %$$\mathcal{F}''_1 = \{x \in \mathcal{F}'_1 | \forall z \in \pi_{R'\setminus H_1}'^{-1}(y') \, \agr(z_{R'\setminus H_1}, x_{R'\setminus H_1}) = 0\}.$$
    By Lemma \ref{lemma: homogenous avoiding}, we get
	$$\nu'_{[n] \setminus (R \cup H_1)}(\mathcal{F}''_1) \geq \left(1 - \frac{2|R'|\frac{b^2m'}{m''} b^2}{m'}\right) \nu'_{[n] \setminus (R \cup H_1)}(\mathcal{F}'_1) \geq \left(1 - \frac{2 b^4 \ln \left(\beta_2^{-1}\right)}{m'' \ln 2}\right)\cdot \frac{1}{2} \geq \frac{1}{4}.$$
	
	Take arbitrary $x' \in [m'']^{R'\setminus H_1}$  such that $\nu'^{\pi'}_{[n] \setminus (R \cup R' \cup H_1)}({\mathcal{F}''_1}^{\pi'}(R' \setminus H_1 \rightarrow x')) \geq \nu'^{\pi'}_{[n]\setminus (R \cup H_1)}({\mathcal{F}_1''}^{\pi'})$. Note, that such $x'$ exists by Claim~\ref{claim: averaging argument} and $\agr(x', y') = 0$, because for all $x'$ with $\agr(x', y') \neq 0$ we have ${\mathcal{F}_1''}^{\pi'}(R' \setminus H_1 \rightarrow x') = \emptyset$. Let $\mathcal{F}'''_1 = {\mathcal{F}_1''}^{\pi'}(R' \setminus H_1 \rightarrow x')$. By Claim \ref{claim: measure consistency after a gluing} we get
	$$\nu''_{[n] \setminus (R \cup R' \cup H_1)}(\mathcal{F}'''_1) = \nu'^{\pi'}_{[n] \setminus (R \cup R' \cup H_1)}(\mathcal{F}'''_1) \geq \nu'^{\pi'}_{[n] \setminus (R \cup H_1)}({\mathcal{F}_1''}^{\pi'}) \geq \nu'_{[n] \setminus (R \cup H_1)}(\mathcal{F}''_1) \geq \frac{1}{4}.$$
	For any $x'' \in \mathcal{F}'''_1, y'' \in \mathcal{F}'''_2$, there exist $x' \in \mathcal{F}'_1, y' \in \mathcal{F}''_2$ such that
    $$\agrparam{[n]\setminus (H_1 \cup H_2 \cup R)}{\pi'(x')}{\pi'(y')} = \agrparam{[n]\setminus (H_1 \cup H_2 \cup R \cup R')}{x''}{y''},$$
    %$$\agr(\pi'(x')|_{[n]\setminus (H_1 \cup H_2 \cup R)}, \pi'(y')|_{[n]\setminus (H_1 \cup H_2 \cup R)}) = \agr(x''|_{[n]\setminus (H_1 \cup H_2 \cup R \cup R')}, y''|_{[n]\setminus (H_1 \cup H_2 \cup R \cup R')}),$$
    and, therefore,
    $$\agrparam{[n]\setminus (H_1 \cup H_2 \cup R)}{x'}{y'} \leq \agrparam{[n]\setminus (H_1 \cup H_2 \cup R \cup R')}{x'}{y'}.$$
    %$$\agr(x'|_{[n]\setminus (H_1 \cup H_2 \cup R)}, y'|_{[n]\setminus (H_1 \cup H_2 \cup R)}) \leq \agr(x'|_{[n]\setminus (H_1 \cup H_2 \cup R \cup R')}, y'|_{[n]\setminus (H_1 \cup H_2 \cup R \cup R')}).$$
    Thus, it is sufficient to find $x'' \in \mathcal{F}'''_1, y'' \in \mathcal{F}'''_2$ with
    $$\agrparam{[n]\setminus (H_1 \cup H_2 \cup R \cup R')}{x''}{y''} = 0.$$
    %$$\agr(x''|_{[n]\setminus (H_1 \cup H_2 \cup R \cup R')}, y''|_{[n]\setminus (H_1 \cup H_2 \cup R \cup R')}) = 0.$$

    \textbf{Step $5$.}
	By Claim~\ref{claim: averaging argument} there exist $z_i \in [m]^{(H_1 \cup H_2) \setminus (H_i \cup R \cup R')}$ such that
	$$\nu''_{[n]\setminus(R\cup R' \cup H_1 \cup H_2)}(\mathcal{F}'''_i((H_1 \cup H_2) \setminus (H_i \cup R \cup R') \rightarrow z_i)) \geq \nu''_{[n]\setminus(R\cup R' \cup H_i)}(\mathcal{F}'''_i).$$
	Let $\mathcal{F}''''_i = \mathcal{F}'''_i((H_1 \cup H_2) \setminus (H_i \cup R \cup R') \rightarrow z_i)$. Note, that only at this step we get the same set of coordinates for both families.

    \textbf{Step $6$.}
	Finally, we apply Lemma \ref{lemma: hoffman bound} for $\mathcal{F}''''_1$ and $\mathcal{F}''''_2$ to find $x'' \in \mathcal{F}''''_1$ and $y'' \in \mathcal{F}''''_2$ with $\agr(x'', y'') = 0$. Since $\nu'$ is $b^2$-balanced and $\pi' \in \Pi_{m', m'', b^2}^{\otimes [n] \setminus R}$, by Claim \ref{claim: balanced consistency} $\nu''$ is $b^4$-balanced. Thus, we get $\nu''_i(x) \leq \frac{b^4}{m''} \leq \frac{1}{3}$. Therefore, if $\mathcal{F}''''_1$ and $\mathcal{F}''''_2$ are cross-intersecting, we may apply Lemma \ref{lemma: hoffman bound} with $\lambda = \frac{1}{3}$ and get
	$$\frac{1}{8} \leq \nu''(\mathcal{F}''''_1)\nu''(\mathcal{F}''''_2) \leq \left(\frac{\lambda}{1-\lambda}\right)^2\left(1-\nu''(\mathcal{F}''''_1)\right)\left(1-\nu''(\mathcal{F}''''_2)\right) \leq \frac{3}{32}.$$
	However, $\frac{1}{8} > \frac{3}{32}$ and, therefore, there exist $x'' \in \mathcal{F}''''_1, y'' \in \mathcal{F}''''_2$ with $\agr(x'', y'') = 0$. Thus, there exist $(x_{R}, x_{R'}, z_1, x'') \in \mathcal{F}_1$ and $(y_{R}, y_{R'}, z_2, y'') \in \mathcal{F}_2$ such that $\pi_{R}(x_{R})=x, \pi_{R}(y_{R})=y, \pi'_{R'}(x_{R'})=x', \pi'_{R'}(y_{R'})=y'$ and $\agr(x, y)=\agr(x', y')=\agr(x'', y'')=0$. Therefore,
    \begin{align*}
    \agrparam{[n] \setminus (H_1 \cup H_2)}{(x_{R}, x_{R'}, z_1, x'')}{(y_{R}, y_{R'}, z_2, y'')} = 0. & \qedhere
    \end{align*}
    %$$\agr((x_{R}, x_{R'}, z_1, x'')|_{[n] \setminus (H_1 \cup H_2)}, (y_{R}, y_{R'}, z_2, y'')|_{[n] \setminus (H_1 \cup H_2)}) = 0.$$
\end{proof}

\section{Case $m\ge \poly(t)$, $n \ge \poly(t) \log m$}
\label{section: measure boosting argument case}

In this section, we prove the following theorem.

\begin{theorem}    
	\label{theorem: n > poly(t) ln m}
	Let $t \geq 2$,
    \begin{align}
        n \geq t + \frac{6t^2\ln m}{\ln \left(\frac{9}{8}\right)} + 2 + \frac{6t\ln m + 2 \ln \left(\frac{32}{3}\right)}{\ln 2}, \label{eq: bound on n, case n > poly(t)log(m)}
    \end{align}
    \begin{align}
        m \geq \frac{48}{\ln\left(\frac{9}{8}\right)}t^3\ln m + 16t, \label{eq: bound 1 on m, case n > poly(t)log(m)}
    \end{align}
    \begin{align}
        m \geq 2 \left(\frac{3}{2}\right)^{2}C_1(2)\left(\ln\left(\frac{32}{3}m^{3t}\right) \right)^{2C_2(2)}, \label{eq: bound 2 on m, case n > poly(t)log(m)}
    \end{align}
	where $C_1(b)$ and $C_2(b)$ are from Lemma \ref{lemma: cross matching for significant measure} and 
    \begin{align}
        m \geq 48\left(\frac{3}{\ln \frac{9}{8}}t^2 \ln m + 1\right). \label{eq: bound 3 on m, case n > poly(t)log(m)}
    \end{align}
	Let $\mathcal{F} \subset [m]^n$ be such family, that $\agr(x, y) \neq t - 1$ for any $x, y \in \mathcal{F}$ and $|\mathcal{F}| \geq m^{n-t}$. Then there exist $Z \subset n$ and $x \in [m]^{Z}$ such that $|Z|=t$ and $\mathcal{F} = [m]^{n}[Z \rightarrow x]$. 
\end{theorem}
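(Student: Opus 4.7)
I follow the measure-boosting blueprint of Section~\ref{section: preliminaries}. First, apply Lemma~\ref{lemma: spread approximation} to $\mathcal{F}$ with a small constant $\tau$ (say $\tau = 2$) and $q$ of order $t\log m$, producing $\mathcal{F} = \mathcal{R} \sqcup \bigsqcup_{i=1}^\ell \mathcal{F}_i$ with each $\mathcal{F}_i \subseteq \mathcal{F}[Z_i \to x_i]$ built from a $\tau$-homogeneous core and $\mu(\mathcal{R}) \le \tau^{-q}$. Next, show that $\mathcal{S} = \{(Z_i, x_i)\}$ is $t$-agreeing. Otherwise, pick $(Z_a, x_a), (Z_b, x_b)$ with $s := \agrparam{Z_a \cap Z_b}{x_a}{x_b} < t$. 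Using an averaging argument built on Lemma~\ref{lemma: large restriction with large probavility}, I find $H \subseteq [n] \setminus (Z_a \cup Z_b)$ of size $t - 1 - s$ and $y \in [m]^H$ such that both $\mathcal{F}_a(Z_a \cup H \to (x_a, y))$ and $\mathcal{F}_b(Z_b \cup H \to (x_b, y))$ retain non-trivial measure and remain mildly homogeneous. Lemma~\ref{lemma: homogenous avoiding} then carves out codes agreeing with $x_b$ anywhere on $Z_b \setminus Z_a$ (and symmetrically for the second family), and Lemma~\ref{lemma: cross matching for significant measure} supplies cross-disagreement on $[n] \setminus (Z_a \cup Z_b \cup H)$. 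The resulting pair of codes agrees on exactly $s + |H| = t - 1$ coordinates, contradicting the avoidance hypothesis.

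With $\mathcal{S}$ now $t$-agreeing, Corollary~\ref{corollary: simplification argument} offers two alternatives: either all $(Z_i, x_i)$ share a common restriction $(T, x^*)$ with $|T| = t$, or $|[m]^n[\mathcal{S}]| \le \varepsilon m^{n-t}$ for small $\varepsilon$. The latter gives $|\mathcal{F}| \le |\mathcal{R}| + |[m]^n[\mathcal{S}]| \le \tau^{-q} m^n + \varepsilon m^{n-t} < m^{n-t}$ under our hypotheses, contradicting $|\mathcal{F}| \ge m^{n-t}$. Hence $(T, x^*)$ exists. Writing $\mathcal{S}^* := [m]^n[T \to x^*]$, each $\mathcal{F}_i \subseteq \mathcal{S}^*$, so $\mathcal{F} \setminus \mathcal{S}^* \subseteq \mathcal{R}$, and because $|\mathcal{F}| \ge |\mathcal{S}^*|$ we also get $|\mathcal{S}^* \setminus \mathcal{F}| \le |\mathcal{F} \setminus \mathcal{S}^*| \le \tau^{-q} m^n$.

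To finish, suppose $y \in \mathcal{F} \setminus \mathcal{S}^*$ differs from $x^*$ on $d \ge 1$ coordinates of $T$. Any $z \in \mathcal{S}^*$ with $\agr_{[n] \setminus T}(y, z) = d - 1$ satisfies $\agr(y, z) = t - 1$ and so must lie in $\mathcal{S}^* \setminus \mathcal{F}$; hence $|\mathcal{S}^* \setminus \mathcal{F}| \ge \binom{n-t}{d-1}(m-1)^{n-t-d+1}$. Under the hypotheses this exceeds $\tau^{-q} m^n$ for every $d \in \{1, \ldots, t\}$, contradicting the upper bound above. (For $d = 1$ one could instead apply Lemma~\ref{lemma: unbalanced cross matching} directly to $(\mathcal{F} \cap \mathcal{S}^*)|_{[n] \setminus T}$ and $\{y|_{[n] \setminus T}\}$, recovering the same contradiction.) Hence $\mathcal{F} \subseteq \mathcal{S}^*$; combined with $|\mathcal{F}| \ge m^{n-t} = |\mathcal{S}^*|$ we conclude $\mathcal{F} = \mathcal{S}^*$.

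I expect Stage 2 to be the crux: preserving $\tau$-homogeneity through several successive restrictions (the joint $(H, y)$ restriction and the carving via Lemma~\ref{lemma: homogenous avoiding}) while remaining within the stringent $m > \tau\,C_1(b)(\cdots)^{C_2(b)}$ tolerance of Lemma~\ref{lemma: cross matching for significant measure} is delicate and dictates the shape of conditions~\eqref{eq: bound on n, case n > poly(t)log(m)}--\eqref{eq: bound 3 on m, case n > poly(t)log(m)}.
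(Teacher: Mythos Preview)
Your four-step outline matches the paper's proof, and your Step~3 and Step~4 are essentially correct (your counting argument in Step~4 is a legitimate alternative to the paper's Lemma~\ref{lemma: almost all to all}). But the choice $\tau = 2$ in Step~1 breaks Step~2. To find a \emph{single} $y \in [m]^H$ for which \emph{both} restricted families retain a constant fraction of their measure, you need Lemma~\ref{lemma: large restriction with large probavility} to give probability $> 1/2$ for each family separately; that forces $\tau < (3/2)^{1/|H|}$, and since $|H|$ may be as large as $t-1$, you need $\tau < (3/2)^{1/(t-1)}$. With $\tau = 2$ there is no way to guarantee a common good restriction. The paper takes $\tau = (9/8)^{1/t}$ precisely for this reason, and this drives everything else: to still achieve $\tau^{-q} \le m^{-3t}$ one now needs $q \approx \frac{3t^2 \ln m}{\ln(9/8)}$, quadratic rather than linear in $t$, which is exactly what produces the shape of conditions~\eqref{eq: bound on n, case n > poly(t)log(m)} and~\eqref{eq: bound 1 on m, case n > poly(t)log(m)}--\eqref{eq: bound 3 on m, case n > poly(t)log(m)}.

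A second, smaller point: in Step~2 you should carve via Lemma~\ref{lemma: homogenous avoiding} \emph{before} taking the common $(H,y)$ restriction, not after. Carving first keeps the families $\tau''$-homogeneous with $\tau'' = \tau/(1 - q\tau b/m)$, so that Lemma~\ref{lemma: large restriction with large probavility} applies and the post-restriction homogeneity can be tracked (the paper gets $\tau' = 2(\tau'')^t$, packaged as Lemma~\ref{lemma: big common restriction}); restricting first loses control of homogeneity. Once $\tau$ is chosen correctly and the order in Step~2 is fixed, your plan coincides with the paper's proof.
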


The proof is divided into the following steps.

\textbf{Step $1$} Using Lemma \ref{lemma: spread approximation} we decompose $\mathcal{F}$ into homogenous families $\mathcal{F}[Z_i \rightarrow x_i]$, $i \in [l]$, with significant measures and a small residue $\mathcal{R}$.

\textbf{Step $2$.} Arguing indirectly, we prove that $\agrparam{Z_i \cap Z_j}{x_i}{x_j} \geq t$ for all $i, j$. First, assuming the contrary, we use Lemma \ref{lemma: homogenous avoiding} to obtain subfamilies $\mathcal{F}'_i \subset \mathcal{F}[Z_i \rightarrow x_i]$ and $\mathcal{F}'_j \subset \mathcal{F}[Z_j \rightarrow x_j]$ such that $\agrparam{Z_i \cup Z_j}{y_i}{y_j} = \agrparam{Z_i \cap Z_j}{x_i}{x_j}$ for all $y_i \in \mathcal{F}'_i, y_j \in \mathcal{F}'_j$. Second, we use Lemma \ref{lemma: large restriction with large probavility} to find a common restriction $H \rightarrow y$ of families $\mathcal{F}'_i$ and $\mathcal{F}'_j$ on a set of coordinates $H$ with size $|H| = (t-1) - \agrparam{Z_i \cap Z_j}{x_i}{x_j}$, which leaves the measures of the families significant. Finally, we use Lemma \ref{lemma: cross matching for significant measure} to find a cross-disagreement in families $\mathcal{F}'_i(Z_i \cup H \rightarrow (x_i, y))$ and $\mathcal{F}'_j(Z_j \cup H \rightarrow (x_j, y))$. Since this cross-disagreement implies existence of codes, agreeing exactly in $(t-1)$ coordinates, in $\mathcal{F}$, we conclude that $\agrparam{Z_i \cap Z_j}{x_i}{x_j} \geq t$ for all $i, j$.

\textbf{Step $3$.} We use Corollary \ref{corollary: simplification argument} to prove that there is $Z_0 \subset [n]$ and $x_0 \in [m]^{Z_0}$ such that $|Z_0| = t$ and for all $i \in [l]$ we have $Z_0 \subset Z_i$ and $x_0 = x_i|_{Z_0}$. That is, we prove that all the family $\mathcal{F}$, except the small residue $\mathcal{R}$, is contained in $[m]^{n}[Z_0 \rightarrow x_0]$ for some $Z_0$ with $|Z_0| = t$.

\textbf{Step $4$.} We prove that the residue $\mathcal{R}$ is empty, that is $\mathcal{F} \subset [m]^{n}[Z_0 \rightarrow x_0]$. In this step we deal with huge family $\mathcal{F}[Z_0 \rightarrow x_0]$ and small family $\mathcal{R}$, so we use Lemma \ref{lemma: unbalanced cross matching} to find cross-disagreement in the unbalanced case.

The following lemma is a key ingredient of the second step of the proof of Theorem \ref{theorem: n > poly(t) ln m}.

\begin{lemma}
	\label{lemma: big common restriction}
	Let $\nu$ be a $b$-balanced product measure on $[m]^{n}$. For $i \in \{1, 2\}$ let $Z_i \subset [n], x_i \in [m]^{Z_i}$ and $\mathcal{F}_i \in [m]^{[n] \setminus Z_i}$ are $\tau$-homogenous with respect to $\nu_{Z_i \rightarrow x_i}$. Let $\nu_{Z_i \rightarrow x_i}(\mathcal{F}_i) = \alpha_i$. Let $t, q$ be integer numbers such that $|Z_i| \leq q$ for $i \in \{1, 2\}$ and
    $\agrparam{Z_1 \cap Z_2}{x_1}{x_2} < t$.
    %$\agr(x_1|_{Z_1 \cap Z_2}, x_2|_{Z_1 \cap Z_2}) < t$.
    If
	$$\tau \leq \left(1 - \frac{2qb}{m}\right)\left(\frac{3}{2}\right)^{\frac{1}{t-1}}$$
	then there exist $H \subset [n] \setminus (Z_1 \cup Z_2)$ with
    $|H| = t - 1 - \agrparam{Z_1 \cap Z_2}{x_1}{x_2}$,
    %$|H| = t - 1 - \agr(x_1|_{Z_1 \cap Z_2}, x_2|_{Z_1 \cap Z_2})$,
    $y \in [m]^{H}$ and $\mathcal{F}'_i \subset \mathcal{F}_i(H \rightarrow y)$ such that
	$$\nu_{Z_i \sqcup H \rightarrow (x_i, y)}\left( \mathcal{F}'_i \right) \geq \frac{1}{2}\left(1 - \frac{q\tau b}{m}\right)\alpha_i,$$
	$\mathcal{F}'_i$ are $\tau'$-homogenous with respect to $\nu_{Z_i \sqcup H \rightarrow (x_i, y)}$ with
	$$\tau' = 2\left(\frac{\tau}{\left(1 - \frac{q \tau b}{m}\right)}\right)^{t},$$
	and for any $z_i \in \mathcal{F}'_i$ we have
    $\agrparam{Z_1 \cup Z_2 \setminus Z_i}{z_i}{x_{3-i}} = 0$.
    %$\agr(z_i|_{Z_1 \cup Z_2 \setminus Z_i}, x_{3-i}) = 0$.
\end{lemma}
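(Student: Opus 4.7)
The plan is to produce $\mathcal{F}'_i$ in two stages. First I would prune each $\mathcal{F}_i$ so that what remains disagrees with $x_{3-i}$ on every coordinate of $Z_{3-i}\setminus Z_i$; then I would pick a \emph{common} restriction $H\to y$ with $H\subset [n]\setminus(Z_1\cup Z_2)$, $|H|=t-1-a$ (where $a=\agr_{Z_1\cap Z_2}(x_1,x_2)$), that simultaneously preserves a constant fraction of both pruned measures.

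\textbf{Stage 1 (disagreement pruning).} For each $i$, apply Lemma~\ref{lemma: homogenous avoiding} to $\mathcal{F}_i$ with $R=Z_{3-i}\setminus Z_i$ and $X=\prod_{j\in R}\{(x_{3-i})_j\}$ a product of singletons, so $\sum_j |X_j|=|R|\le q$. The hypothesis on $\tau$ implies $\tau\le 3/2$, hence $q\tau b/m<1$, so the lemma applies and yields a subfamily $\mathcal{F}_i^{\mathrm{new}}\subset \mathcal{F}_i$ of $\nu_{Z_i\to x_i}$-measure at least $(1-q\tau b/m)\alpha_i$, which is $\tau_0$-homogenous with $\tau_0=\tau/(1-q\tau b/m)$, and every element of which has $\agr_{Z_{3-i}\setminus Z_i}(\cdot,x_{3-i})=0$. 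This disagreement property is preserved under any further restriction, so conclusion (3) of the lemma is automatic once we shrink to $\mathcal{F}'_i$.

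\textbf{Stage 2 (common restriction).} Fix any $H\subset [n]\setminus(Z_1\cup Z_2)$ with $|H|=t-1-a$, which exists because $n$ is large relative to $q,t$. I apply Lemma~\ref{lemma: large restriction with large probavility} with $p=1/2$ to each $\mathcal{F}_i^{\mathrm{new}}$; its hypothesis is $\tau_0<(3/2)^{1/|H|}$. Since $|H|\le t-1$, it suffices to check $\tau_0\le (3/2)^{1/(t-1)}=:c$. Writing the given bound as $\tau\le c(1-2qb/m)$ and using $1/(1+x)\ge 1-x$ together with $c\le 3/2\le 2$, one gets $\tau(1+cqb/m)\le c$, equivalent to $\tau\le c(1-q\tau b/m)$, i.e.\ $\tau_0\le c$; strictness comes either from $|H|<t-1$ (when $a\ge 1$) or from slack in this calculation (when $a=0$). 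For each $i$ the lemma then gives probability strictly greater than $1/2$ over $\bx\sim\nu_H$ that $\nu_{Z_i\sqcup H\to(x_i,\bx)}(\mathcal{F}_i^{\mathrm{new}}(H\to\bx))\ge \tfrac12\nu_{Z_i\to x_i}(\mathcal{F}_i^{\mathrm{new}})$, and a union bound produces a single $y\in[m]^H$ that works for both $i$. Set $\mathcal{F}'_i=\mathcal{F}_i^{\mathrm{new}}(H\to y)$; the measure bound of the lemma is immediate, and property (3) is inherited from Stage~1.

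\textbf{Stage 3 (homogeneity of $\mathcal{F}'_i$).} For $S\subset [n]\setminus(Z_i\sqcup H)$ and $x'\in[m]^S$ I unfold the restriction:
$$\nu_{Z_i\sqcup H\sqcup S\to(x_i,y,x')}(\mathcal{F}'_i(S\to x'))=\nu_{Z_i\sqcup(H\sqcup S)\to(x_i,y,x')}\bigl(\mathcal{F}_i^{\mathrm{new}}(H\sqcup S\to(y,x'))\bigr)\le \tau_0^{|H|+|S|}\,\alpha_i^{\mathrm{new}}.$$
Since $\alpha_i^{\mathrm{new}}\le 2\,\nu_{Z_i\sqcup H\to(x_i,y)}(\mathcal{F}'_i)$ and $|H|\le t-1$, the right side is at most $2\tau_0^{t-1}\cdot \tau_0^{|S|}\cdot \nu(\mathcal{F}'_i)\le (2\tau_0^{t})^{|S|}\,\nu(\mathcal{F}'_i)$ for every $|S|\ge 1$, matching the stated $\tau'=2(\tau/(1-q\tau b/m))^t$. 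The main technical obstacle is the purely arithmetic chain converting the hypothesis $\tau\le (1-2qb/m)(3/2)^{1/(t-1)}$ into the usable strict bound $\tau_0<(3/2)^{1/|H|}$; once that is in hand, everything else is direct bookkeeping.
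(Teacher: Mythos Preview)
Your proof is correct and follows essentially the same approach as the paper: first prune each $\mathcal{F}_i$ via Lemma~\ref{lemma: homogenous avoiding} to force disagreement with $x_{3-i}$ on $Z_{3-i}\setminus Z_i$, then apply Lemma~\ref{lemma: large restriction with large probavility} with $p=1/2$ and a union bound to find a common $y\in[m]^H$, and finally check $\tau'$-homogeneity by unfolding the restriction back to $\mathcal{F}_i^{\mathrm{new}}$. The only cosmetic difference is in the arithmetic verifying $\tau_0<(3/2)^{1/(t-1)}$: the paper simply observes $\tau<2$ gives $\tau_0=\tau/(1-q\tau b/m)<\tau/(1-2qb/m)\le(3/2)^{1/(t-1)}$, while you route through $\tau(1+cqb/m)\le c$ using $c\le 3/2<2$; both arguments are equivalent and yield the required strict inequality.
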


Note, that the last condition guarantees that for any $z_1 \in \mathcal{F}'_1, z_2 \in \mathcal{F}'_2$ we have
\[
\begin{split}
	&\agr((x_1, y, z_1), (x_2, y, z_2)) = \agrparam{Z_1 \cap Z_2}{x_1}{x_2} + |H| + \agrparam{[n] \setminus (Z_1 \cup Z_2 \cup H)}{z_1}{z_2} = \\&
	t - 1 + \agrparam{[n] \setminus (Z_1 \cup Z_2 \cup H)}{z_1}{z_2}.
\end{split}
\]
%\[
%\begin{split}
%	&\agr((x_1, y, z_1), (x_2, y, z_2)) = \agrparam{Z_1 \cap Z_2}{x_1}{x_2} + |H| + \agr(z_1|_{[n] \setminus (Z_1 \cup Z_2 \cup H)}, z_2|_{[n] \setminus (Z_1 \cup Z_2 \cup H)}) = \\&
%	t - 1 + \agr(z_1|_{[n] \setminus (Z_1 \cup Z_2 \cup H)}, z_2|_{[n] \setminus (Z_1 \cup Z_2 \cup H)}).
%\end{split}
%\]

\begin{proof}
	Let $h = t - 1 - \agrparam{Z_1 \cap Z_2}{x_1}{x_2}$. Let $H \in \binom{[n] \setminus (Z_1 \cup Z_2)}{h}$ be arbitrary. % We will construct $y_i$ as $(z_i, z)$, where $z_i \in [m]^{Z_1 \cup Z_2 \setminus Z_i}$ is such that $\agr(z_i, x_{3-i}|_{Z_1 \cup Z_2 \setminus Z_i}) = 0$ and $z \in [m]^H$ is common for $y_1$ and $y_2$. Then we will obviously get $\agr((x_1, y_1), (x_2, y_2)) = t - 1$.
	
	Let
    $\mathcal{F}''_i = \{y \in \mathcal{F}_i| \agrparam{Z_1 \cup Z_2 \setminus Z_i}{y}{x_{3-i}} = 0\}$.
    %$\mathcal{F}''_i = \{y \in \mathcal{F}_i| \agr(y|_{Z_1 \cup Z_2 \setminus Z_i}, x_{3-i}) = 0\}$.
    By Lemma \ref{lemma: homogenous avoiding}
	$$\nu_{Z_i \rightarrow x_i}(\mathcal{F}''_i) \geq \left(1 - \frac{q\tau b}{m}\right)\alpha_i$$
	and $\mathcal{F}''_i$ is $\tau''$-homogenous with $\tau''=\left(\frac{\tau}{\left(1 - \frac{q\tau b}{m}\right)}\right)$. Since $\tau < 2$, we get 
	$$\tau'' < \left(\frac{\tau}{\left(1 - \frac{2qb}{m}\right)}\right) \le \left(\frac{3}{2}\right)^{\frac{1}{t-1}}$$
	
	Let $\textbf{y} \sim \nu_H$ be a random element of $[m]^H$. By Lemma \ref{lemma: large restriction with large probavility} for $p=\frac{1}{2}$ we have
	$$\mathbb{P}\left(\nu_{[n]\setminus (Z_i \cup H)}\left(\mathcal{F}''_i(H \rightarrow \textbf{y})\right) \geq \frac{1}{2}\nu_{Z_i \rightarrow x_i}(\mathcal{F}''_i) \right) > \frac{1}{2}.$$
	Therefore, there exists $y \in [m]^H$ such that
	$$\nu_{[n]\setminus (Z_i \cup H)}\left(\mathcal{F}''_i(H \rightarrow y)\right) \geq \frac{1}{2}\nu_{Z_i \rightarrow x_i}(\mathcal{F}''_i) \geq \frac{1}{2}\left(1 - \frac{q\tau b}{m}\right)\alpha_i$$
	for both $i \in \{1, 2\}$. Let $\mathcal{F}'_i = \mathcal{F}''_i(H \rightarrow y).$ 
	
	We need to check that $\mathcal{F}'_i$ are $\tau'$-homogenous with respect to $\nu_{[n] \setminus (Z_i \cup H)}$ with
    $$\tau' = 2\left(\tau''\right)^{t} = 2\left(\frac{\tau}{\left(1 - \frac{q \tau b}{m}\right)}\right)^t.$$
	Indeed, for any $H' \subset [n] \setminus (Z_i \cup H)$ and $x' \in [m]^{H'}$, we get
	
		\[
	\begin{split}
		&\nu_{[n]\setminus (Z_i \cup H \cup H')}\left(\mathcal{F}'_i(H' \rightarrow x')\right) = \nu_{[n]\setminus (Z_i \cup H \cup H')}\left(\mathcal{F}''_i(H'\cup H \rightarrow (x', y))\right) \leq \\&
		\leq \tau''^{|H'| + |H|}\nu_{[n]\setminus Z_i}(\mathcal{F}''_i) \leq 2\tau''^{|H'| + |H|} \nu_{[n]\setminus (Z_i \cup H)}(\mathcal{F}'_i) \leq \\&
		\leq 2\tau''^{|H'| + t - 1} \nu_{[n]\setminus (Z_i \cup H)}(\mathcal{F}'_i) \leq \left(2\tau''^{t}\right)^{|H'|}\nu_{[n]\setminus (Z_i \cup H)}(\mathcal{F}'_i). \qedhere
	\end{split}
	\]
	
\end{proof}

The following lemma combines Lemmas \ref{lemma: cross matching for significant measure} and \ref{lemma: big common restriction} into the second step of the proof of Theorem \ref{theorem: n > poly(t) ln m}.

\begin{lemma}
	\label{lemma: t-intersectionness of approximation}
	Let $\mathcal{F} \subset [m]^n$ and $\nu$ be a $b$-balanced product measure on $[m]^n$ for $b \ge 2$. For $i \in \{1, 2\}$ let $Z_i \subset [n]$ and $x_i \in [m]^{Z_i}$. Let $t, q$ be integer numbers such that $|Z_i| \leq q$ for $i \in \{1, 2\}$ and
    $\agrparam{Z_1 \cap Z_2}{x_1}{x_2} < t$.
    %Let $\mathcal{F}_i \subset \mathcal{F}[Z_i \rightarrow x_i]$ and $\alpha_i = \nu_{Z_i \rightarrow x_i}(\mathcal{F}_i(Z_i \rightarrow x_i))$ and let $\mathcal{F}(Z_i \rightarrow x_i)$ be $\tau$-homogenous with respect to $\nu_{Z_i \rightarrow x_i}$.
    Let families $\mathcal{F}_i \subset \mathcal{F}[Z_i \rightarrow x_i]$ be such that $\mathcal{F}(Z_i \rightarrow x_i)$ is $\tau$-homogenous with respect to $\nu_{Z_i \rightarrow x_i}$. Define $\alpha_i = \nu_{Z_i \rightarrow x_i}(\mathcal{F}_i(Z_i \rightarrow x_i))$.
    If
	$$\tau < \left(1 - \frac{2qb}{m}\right)\left(\frac{3}{2}\right)^{\frac{1}{t-1}},$$
	$$n > 2q + t + \frac{\ln(\left(\frac{1}{2}\left(1 - \frac{q\tau b}{m}\right)\alpha_1\right)^{-1}) + \ln(2\left(\frac{1}{2}\left(1 - \frac{q\tau b}{m}\right)\alpha_2\right)^{-1})}{\ln 2}$$
	and
	$$m > 2\left(\frac{3}{2}\right)^{\frac{t}{t-1}} C_1(b) \left(\ln\left(2\left(\left(1 - \frac{q\tau b}{m}\right)\alpha_1\right)^{-1}\right)\ln\left(4\left(\left(1 - \frac{q\tau b}{m}\right)\alpha_2\right)^{-1}\right)\right)^{C_2(b)},$$
	where $C_1(b)$ and $C_2(b)$ are from Lemma \ref{lemma: cross matching for significant measure}, then there exist $x'_i \in \mathcal{F}[Z_i \rightarrow x_i]$ such that $\agr(x'_1, x'_2) = t - 1$.
\end{lemma}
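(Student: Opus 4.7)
The plan is to combine Lemma~\ref{lemma: big common restriction} with Lemma~\ref{lemma: cross matching for significant measure} to produce codes $x'_1, x'_2$ whose agreement decomposes into exactly $t-1$ enforced coincidences (with zero agreement elsewhere). Let $h = t - 1 - \agrparam{Z_1 \cap Z_2}{x_1}{x_2} \ge 1$. The overall idea is: first find $h$ additional common coordinates outside $Z_1 \cup Z_2$ that a ``large chunk'' of each family agrees with, while killing any accidental agreement on $(Z_1 \cup Z_2)\setminus (Z_1 \cap Z_2)$; then use the constant-measure cross-disagreement machinery to make the remainder of the codes disagree.

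First, I would apply Lemma~\ref{lemma: big common restriction} to the families $\mathcal{F}_i(Z_i \to x_i)$, which are $\tau$-homogeneous w.r.t. $\nu_{Z_i \to x_i}$ by hypothesis. The assumption $\tau \le (1-2qb/m)(3/2)^{1/(t-1)}$ is exactly the one required by that lemma. This produces a set $H \subset [n]\setminus(Z_1 \cup Z_2)$ with $|H| = h$, a vector $y \in [m]^H$, and subfamilies $\mathcal{F}'_i \subset \mathcal{F}_i(Z_i \cup H \to (x_i, y))$ which are $\tau'$-homogeneous for $\tau' = 2(\tau/(1-q\tau b/m))^t$, have $\nu_{Z_i \cup H \to (x_i, y)}$-measure at least $\tfrac12 (1 - q\tau b/m)\alpha_i$, and moreover every element of $\mathcal{F}'_i$ disagrees with $x_{3-i}$ on \emph{every} coordinate of $(Z_1 \cup Z_2)\setminus Z_i$.

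Next, I would apply Lemma~\ref{lemma: cross matching for significant measure} to the pair $(\mathcal{F}'_1, \mathcal{F}'_2)$ with the fixed parts $H_1 = Z_1 \cup H$, $H_2 = Z_2 \cup H$ and homogeneity parameter $\tau'$. To check the hypothesis on $m$, note that $\tau \le (1-2qb/m)(3/2)^{1/(t-1)}$ implies $\tau/(1 - q\tau b/m) \le (3/2)^{1/(t-1)}$, so $\tau' \le 2(3/2)^{t/(t-1)}$. Substituting the lower bound $\alpha_i' \ge \tfrac12(1 - q\tau b/m)\alpha_i$ into the hypothesis of Lemma~\ref{lemma: cross matching for significant measure} yields precisely the condition on $m$ stated in the present lemma; similarly, the condition $n > 2q + t + (\cdots)/\ln 2$ (combined with $|H| \le t-1$, $|Z_i|\le q$) implies the bound $n > |H_1| + |H_2| + (\cdots)/\ln 2$ needed there. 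We thereby obtain $z_i \in \mathcal{F}'_i$ with $\agrparam{[n]\setminus(Z_1\cup Z_2 \cup H)}{z_1}{z_2} = 0$.

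Finally, I would set $x'_i = (x_i, y, z_i) \in \mathcal{F}[Z_i \to x_i]$ and compute
\begin{align*}
\agr(x'_1, x'_2) = \agrparam{Z_1 \cap Z_2}{x_1}{x_2} + |H| + \agrparam{Z_1 \setminus Z_2}{x_1}{z_2} + \agrparam{Z_2 \setminus Z_1}{z_1}{x_2} + \agrparam{[n]\setminus(Z_1 \cup Z_2 \cup H)}{z_1}{z_2}.
\end{align*}
The first two terms sum to $t-1$, the last term is zero by the output of Lemma~\ref{lemma: cross matching for significant measure}, and the two middle terms are zero by the ``no-accidental-agreement'' property built into $\mathcal{F}'_i$ by Lemma~\ref{lemma: big common restriction}. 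The main obstacle in this proof is not conceptual but bookkeeping: verifying that the parameter losses incurred when passing from $(\mathcal{F}_i, \tau)$ to $(\mathcal{F}'_i, \tau')$ (a multiplicative loss of $\tfrac12 (1-q\tau b/m)$ in measure and a blow-up to $2(3/2)^{t/(t-1)}$ in homogeneity) are still compatible with the hypotheses of Lemma~\ref{lemma: cross matching for significant measure}; this is what forces the precise form of the hypotheses on $m$, $n$, and $\tau$ in the statement.
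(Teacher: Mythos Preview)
Your proposal is correct and follows essentially the same route as the paper: apply Lemma~\ref{lemma: big common restriction} to produce $H$, $y$, and the $\tau'$-homogeneous families $\mathcal{F}'_i$, then feed these into Lemma~\ref{lemma: cross matching for significant measure}, and finish with exactly the agreement decomposition you wrote. One small slip: you assert $h \ge 1$, but the hypothesis only gives $\agrparam{Z_1\cap Z_2}{x_1}{x_2} \le t-1$, so $h$ may be $0$; this does not affect the argument (Lemma~\ref{lemma: big common restriction} still applies with $H=\varnothing$).
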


Note, that $(Z_1, x_1)$ and $(Z_2, x_2)$ are not required to be distinct.

\begin{proof}
	Let $H \subset [n]\setminus (Z_1 \cup Z_2)$ and $y \in [m]^{H}$ be from Lemma \ref{lemma: big common restriction}. Define $\mathcal{F}'_i = \mathcal{F}_i(H \cup Z_i \rightarrow (x_i, y))$. Put $\alpha'_i = \nu_{[n] \setminus (H \cup Z_i)}(\mathcal{F}'_i)$ and
	$$\tau' = 2\left(\frac{\tau}{\left(1 - \frac{q \tau b}{m}\right)}\right)^{t} < 2\left(\frac{3}{2}\right)^{\frac{t}{t-1}}.$$
	By Lemma \ref{lemma: big common restriction}, families $\mathcal{F}'_i$ are $\tau'$-homogenous with respect to $\nu_{[n] \setminus (H \cup Z_i)}$,
	$$\alpha_i' \geq \frac{1}{2}\left(1 - \frac{q\tau b}{m}\right)\alpha_i.$$
 
	Therefore, by Lemma \ref{lemma: cross matching for significant measure}, there exist $z_i \in \mathcal{F}'_i$ such that
    $\agrparam{[n] \setminus (H \cup Z_1 \cup Z_2)}{z_1}{z_2} = 0$.
    %$\agr(z_1|_{[n] \setminus H \setminus Z_1 \setminus Z_2}, z_2|_{[n] \setminus H \setminus Z_1 \setminus Z_2}) = 0$.
    Let $x_i' = (x_i, y, z_i)$. By the last condition from Lemma \ref{lemma: big common restriction} we get
    $$\agrparam{Z_2 \setminus Z_1}{z_1}{x_2} = \agrparam{Z_1 \setminus Z_2}{x_1}{z_2} = 0.$$
    %$$\agr(x_1|_{Z_2 \setminus Z_1}, z_1|_{Z_2 \setminus Z_1}) = \agr(z_1|_{Z_1 \setminus Z_2}, x_2|_{Z_1 \setminus Z_2}) = 0.$$
    So we have $x_i' \in \mathcal{F}[Z_i \rightarrow x_i]$ and
    \[
    	\begin{split}
    		&\agr(x'_1, x'_2) = |H| + \agrparam{Z_1 \cap Z_2}{x_1}{x_2} + \agrparam{Z_1 \setminus Z_2}{x_1}{z_2} + \agrparam{Z_2 \setminus Z_1}{z_1}{x_2} + \\&
    		+ \agrparam{[n] \setminus (H \cup Z_1 \cup Z_2)}{z_1}{z_2} = (t-1-\agrparam{Z_1 \cap Z_2}{x_1}{x_2}) + \\&
    		+ \agrparam{Z_1 \cap Z_2}{z_1}{z_2} + 0 + 0 + 0 = t-1.
    	\end{split}
    \]
    %\[
    %	\begin{split}
    %		&\agr(x'_1, x'_2) = |H| + \agr((x_1|_{Z_1 \cap Z_2}, x_2|_{Z_1 \cap Z_2}) + \agr(x_1|_{Z_2 \setminus Z_1}, z_1|_{Z_2 \setminus Z_1}) + \agr(z_1|_{Z_1 \setminus Z_2}, x_2|_{Z_1 \setminus Z_2}) + \\&
    %		+ \agr(z_1|_{[n] \setminus (H \cup Z_1 \cup Z_2)}, z_2|_{[n] \setminus (H \cup Z_1 \cup Z_2)}) = (t-1-\agr((x_1|_{Z_1 \cap Z_2}, x_2|_{Z_1 \cap Z_2})) + \\&
    %		+ \agr((x_1|_{Z_1 \cap Z_2}, x_2|_{Z_1 \cap Z_2}) + 0 + 0 + 0 = t-1.
    %	\end{split}
    %\]
\end{proof}

The next lemma proves that if almost all family $\mathcal{F}$ is contained in $[m]^{n}[Z \rightarrow x]$ for some $Z$ with size $t$, then all $\mathcal{F}$ is contained in $[m]^{n}[Z \rightarrow x]$. We use it in the last step of the proof of Theorem \ref{theorem: n > poly(t) ln m}.

\begin{lemma}
    \label{lemma: almost all to all}
    Let $m \geq 8$. Let $\mathcal{F} \subset [m]^n$ be such a family that $\agr(x, y) \neq t - 1$ for any $x, y \in \mathcal{F}$ and $|\mathcal{F}| \geq m^{n-t}$. Assume that there exists $Z \subset n$ and $x \in [m]^{Z}$ such that $|Z| = t$ and $|\mathcal{F} \setminus [m]^n[Z \rightarrow x]| \leq m^{n - 3t}$. Then $\mathcal{F} = [m]^n[Z \rightarrow x]$.
\end{lemma}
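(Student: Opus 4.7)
The plan is to argue by contradiction: assume some $y \in \mathcal{F} \setminus \mathcal{S}$, where $\mathcal{S} := [m]^n[Z \to x]$, and then produce $z \in \mathcal{F} \cap \mathcal{S}$ with $\agr(z, y) = t - 1$, which violates the $(t-1)$-avoidance. Set $k := \agr_Z(y, x) \in \{0, \ldots, t-1\}$. Since any $z \in \mathcal{S}$ has $z|_Z = x$ and therefore $\agr_Z(z, y) = k$, I only need $\agr_{[n] \setminus Z}(z, y) = t - 1 - k$. The natural strategy is to fix a set $H \subseteq [n] \setminus Z$ of size $t - 1 - k$ on which $z$ will be made to agree with $y$, pass to the sub-star $\mathcal{S}_H := [m]^n[Z \cup H \to (x, y|_H)]$, and find $z \in \mathcal{F} \cap \mathcal{S}_H$ that additionally disagrees with $y$ on every coordinate of $W := [n] \setminus (Z \cup H)$.

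To choose $H$, I would use a standard averaging argument. For each $z' \in \mathcal{S} \setminus \mathcal{F}$, the number of $H \in \binom{[n] \setminus Z}{t-1-k}$ with $z'|_H = y|_H$ equals $\binom{\agr_{[n] \setminus Z}(z', y)}{t-1-k} \le \binom{n-t}{t-1-k}$, and summing over $z'$ and then dividing by the total number of $H$ shows that some $H$ satisfies $|\mathcal{S}_H \setminus \mathcal{F}| \le |\mathcal{S} \setminus \mathcal{F}| \le m^{n-3t}$. Now $|W| = n - 2t + 1 + k$, and after identifying $\mathcal{S}_H$ with $[m]^W$, the family $\mathcal{G} := (\mathcal{F} \cap \mathcal{S}_H)|_W \subset [m]^W$ satisfies $\mu(\mathcal{G}) \ge 1 - m^{n-3t}/m^{|W|} = 1 - m^{-(t+1+k)}$, which is very close to $1$.

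To extract the desired $z$, I would apply Lemma~\ref{lemma: unbalanced cross matching} to $\mathcal{G}$ and the singleton $\{y|_W\} \subset [m]^W$: a cross-disagreement is exactly a $z^* \in \mathcal{G}$ that differs from $y|_W$ on every coordinate of $W$, and this lifts to $z \in \mathcal{F} \cap \mathcal{S}_H$ with $\agr(z, y) = k + |H| + 0 = t - 1$, giving the contradiction. Since $\mu(\{y|_W\})^{\log_m 2} = m^{-|W| \log_m 2} = 2^{-|W|}$, the hypothesis of Lemma~\ref{lemma: unbalanced cross matching} reduces, after using $\mu(\mathcal{G}) \ge 1 - m^{-(t+1+k)}$, to the inequality $m^{t+1+k} > 2^{n - 2t + 1 + k}$.

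The main obstacle is verifying precisely this inequality. Taking the worst case $k = 0$, it becomes $m^{t+1} > 2^{n-2t+1}$, which is where the assumption $m \ge 8$ is essential. If it turns out that for some value of $k$ the direct bound $|\mathcal{A}_y \cap \mathcal{S}| = \binom{n-t}{t-1-k}(m-1)^{n-2t+1+k}$ already exceeds $m^{n-3t}$, then $\mathcal{A}_y \cap \mathcal{S} \subseteq \mathcal{S} \setminus \mathcal{F}$ gives an immediate contradiction without invoking Lemma~\ref{lemma: unbalanced cross matching}, so the proof naturally splits into two regimes: one where the combinatorial pigeonhole already suffices, and one where the measure-boosting via Lemma~\ref{lemma: unbalanced cross matching} is required. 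The easy case $k = t-1$ (so $|H| = 0$ and no averaging is needed) should be handled first to illustrate the mechanism; the general case is then obtained by the same argument after the averaging step.
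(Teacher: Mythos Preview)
Your approach has a genuine gap: applying Lemma~\ref{lemma: unbalanced cross matching} with the singleton $\{y|_W\}$ gives only $\mu(\{y|_W\})^{\log_m 2}=2^{-|W|}$, so the hypothesis you need is $m^{t+1+k}>2^{\,n-2t+1+k}$. The lemma you are proving carries no upper bound on $n$, and this inequality is simply false once $n$ is large (e.g.\ $t=1$, $m=8$, $n\ge 7$). The fallback ``direct count'' regime does not rescue you either: the number of $z\in\mathcal S$ with $\agr(z,y)=t-1$ is $\binom{n-t}{t-1-k}(m-1)^{n-2t+1+k}$, and for large $n$ this is \emph{smaller} than $m^{n-3t}$ (for instance $t=1$, $m=8$ gives $7^{n-1}$ versus $8^{n-3}$, and the former loses once $n\gtrsim 32$). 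So your two regimes leave an unbounded range of $n$ uncovered.

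The fix, and what the paper does, is to feed the \emph{entire} residual family $\mathcal F_2=\mathcal F\setminus\mathcal S$ (suitably restricted) into Lemma~\ref{lemma: unbalanced cross matching}, rather than a single element. Concretely: set $\delta=\mu(\mathcal F_2)\le m^{-3t}$, pick any $Z'\subset[n]\setminus Z$ of size $t-1$, and average over $(y,y')\in[m]^{Z\cup Z'}$ to find a restriction with $\mu_{[n]\setminus(Z\cup Z')}(\mathcal F'_2)\ge\delta$; then choose $x'\in[m]^{Z'}$ so that $\agr((x,x'),(y,y'))=t-1$ and set $\mathcal F'_1=\mathcal F_1(Z\cup Z'\to(x,x'))$, which satisfies $\mu(\mathcal F'_1)\ge 1-\delta m^{2t-1}$. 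Now the lemma (in contrapositive) gives $1-\delta m^{2t-1}+\delta^{1/3}\le 1$ (here $m\ge 8$ is used to get $\log_m 2\le 1/3$), and together with $\delta\le m^{-3t}$ this forces $\delta=0$. The point is that $\delta^{1/3}$ does not shrink with $n$, whereas your $2^{-|W|}$ does; that is the missing idea.
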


\begin{proof}
    Let $\mathcal{F}_1 = \mathcal{F}[Z \rightarrow x]$ and $\mathcal{F}_2 = \mathcal{F} \setminus \mathcal{F}_1$. We need to prove $\mathcal{F} \subset [m]^{n}[Z \rightarrow x]$, that is,  $\mathcal{F}_2 = \emptyset$. Since $|\mathcal{F}| \geq m^{n-t} = |[m]^{n}[Z \rightarrow x]|$, it implies $\mathcal{F} = [m]^n[Z \rightarrow x]$. Let $\mathcal{\delta} = \mu(\mathcal{F}_2),$ where $\mu$ is the uniform measure of $[m]^{n}$. The condition $|\mathcal{F} \setminus [m]^n[Z \rightarrow x]| \leq m^{n - 3t}$ implies $\delta \leq m^{-3t}$.
    
    If $n < 3t$, the condition $|\mathcal{F} \setminus [m]^n[Z \rightarrow x]| \leq m^{n - 3t}$ immediately implies $|\mathcal{F}_2| < 1$ and, therefore, $\mathcal{F}_2 = \emptyset$. Thus, we may assume $n \geq 3t > 2t - 1$, that is, $[n] \setminus Z > t-1$. Let $Z' \in {[n] \setminus Z \choose t-1}$ be an arbitrary subset of $(t-1)$ elements of $[n] \setminus Z$.  By Claim \ref{claim: averaging argument} there exists $(y, y') \in [m]^{Z}\times [m]^{Z'}$ such that $\mu_{[n] \setminus (Z \cup Z')}(\mathcal{F}_2(Z\cup Z' \rightarrow (y, y'))) \geq \mu(\mathcal{F}_2) = \delta$. Let $\mathcal{F}'_2 = (\mathcal{F}_2(Z\cup Z' \rightarrow (y, y'))$.
    
    If $\mathcal{F}_2 \neq \emptyset$, we have $\mathcal{F}_2[Z\cup Z' \rightarrow (y, y')] \neq \emptyset$ and therefore $y \neq x$. Thus, we have $0 \leq \agr(x, y) \leq t - 1$, that is $0 \leq (t - 1) - \agr(x, y) \leq t - 1$. Since $m \geq 2$, it implies that there exists $x' \in [m]^{Z'}$ such that $\agr(x', y') = (t - 1) - \agr(x, y)$ and, therefore, $\agr((x, x'), (y, y')) = t-1$. Fix arbitrary such $x'$ and let $\mathcal{F}'_1 = \mathcal{F}_1(Z\cup Z' \rightarrow (x, x'))$. The assumption $\mathcal{F} \geq m^{n-t}$ implies $|[m]^n[Z \rightarrow x] \setminus \mathcal{F}[Z \rightarrow x]| = m^{n-t} - |\mathcal{F}_1| = m^{n-t} - |\mathcal{F}| + |\mathcal{F}_2| \leq |\mathcal{F}_2| = \delta m^{n}$. Since $[m]^n[Z \cup Z' \rightarrow (x, x')] \setminus \mathcal{F}[Z \cup Z' \rightarrow (x, x')] \subset [m]^n[Z \rightarrow x] \setminus \mathcal{F}[Z \rightarrow x]$, we get $|\mathcal{F}'_1| = m^{n-2t+1} - |[m]^n[Z \cup Z' \rightarrow (x, x')] \setminus \mathcal{F}[Z \cup Z' \rightarrow (x, x')]| \geq m^{n-2t+1} - \delta m^{n}$ and, therefore, $\mu_{[n] \setminus (Z \cup Z')}(\mathcal{F}'_1) \geq 1 - \delta m^{2t-1}.$

    We claim that there is no $x'' \in \mathcal{F}'_1, y'' \in \mathcal{F}'_2$ such that $\agr(x'', y'') = 0$. Indeed, otherwise, we have $(x, x', x'') \in \mathcal{F}, (y, y', y'') \in \mathcal{F}$ and $\agr((x, x', x''), (y, y', y'')) = \agr((x, x'), (y, y')) + \agr(x'', y'') = t - 1$. Thus, Lemma \ref{lemma: unbalanced cross matching} implies that $\mu_{[n] \setminus (Z \cup Z')}(\mathcal{F}'_1) + \mu_{[n] \setminus (Z \cup Z')}(\mathcal{F}'_2)^{\log_m(2)} \leq 1.$
    Substituting bounds $\mu_{[n] \setminus (Z \cup Z')}(\mathcal{F}'_1) \geq 1 - \delta m^{2t-1}$, $\mu_{[n] \setminus (Z \cup Z')}(\mathcal{F}')_2 \geq \delta$ and $m \geq 8$, we get $$1 - \delta m^{2t-1} + \delta^{1/3} \leq 1.$$
    Since $\delta \leq m^{-3t}$, we have
    $$1 \geq 1 - \frac{\delta \delta^{-2/3}}{m} + \delta^{1/3} = 1 + \left(1 - \frac{1}{m}\right)\delta^{1/3},$$
    and, consequently, $\delta = 0$. That is, we get $\mathcal{F}_2 = \emptyset$, and, therefore, $\mathcal{F} = [m]^{n}[Z \rightarrow x]$.
\end{proof}

\noindent \textit{Proof of Theorem \ref{theorem: n > poly(t) ln m}.}
\textbf{Step $1$.} Let $\{(Z_i, x_i)| i = 1 \ldots l \}$ be from Lemma \ref{lemma: spread approximation} for
	$$\tau = \left(\frac{9}{8}\right)^{\frac{1}{t}}$$ and
	$$q = \lfloor\log_{\tau}(m^{3t}) \rfloor + 1 = \left\lfloor \frac{3}{\ln \frac{9}{8}}t^2 \ln m \right\rfloor + 1.$$
	Note, that we have $\mu_{Z_i \rightarrow x_i}(\mathcal{F}_i(Z_i \rightarrow x_i)) \geq \tau^{-q} \geq \frac{m^{-3t}}{\tau} \geq \frac{1}{2}m^{-3t}$ for any $i \in [l]$ and $\mu(\mathcal{R}) \leq \tau^{-q} < m^{-3t}$, that is, $|\mathcal{R}| < m^{n-3t}$.
	
	\textbf{Step $2$.} Assume, for the sake of contradiction, that for some $i, j \in [l]$ we have
    $\agrparam{Z_i \cap Z_j}{x_i}{x_j} < t$.
    %$\agr(x_i|_{Z_i \cap Z_j},  x_j|_{Z_i \cap Z_j}) < t$.
    We apply Lemma \ref{lemma: t-intersectionness of approximation} for $(Z_i, x_i)$ and $(Z_j, x_j)$ with $\nu = \mu$, $b=2$, $\tau=\tau$ and $q=q$. Let us check its conditions. Since by requirement \eqref{eq: bound 1 on m, case n > poly(t)log(m)} we have
	$$\left(1 - \frac{4q}{m} \right)^{t} \geq 1 - \frac{4qt}{m} \geq 1 - \frac{4t\left( \log_{\tau}(m^{3t}) + 1\right)}{m} = 1 - \frac{\frac{12}{\ln\left(\frac{9}{8}\right)}t^3\ln m + 4t}{m} \geq \frac{3}{4},$$
	we get
	$$\tau = \left(\frac{3}{4}\right)^{\frac{1}{t}}\left(\frac{3}{2}\right)^{\frac{1}{t}} < \left(1-\frac{4q}{m}\right)\left(\frac{3}{2}\right)^{\frac{1}{t-1}}$$and therefore the assumption on $\tau$ from Lemma \ref{lemma: t-intersectionness of approximation} is satisfied.
	Since in terms of Lemma \ref{lemma: t-intersectionness of approximation} we have $(1 - \frac{q \tau b}{m}) \geq \frac{3}{4}$ and $\alpha_i \geq \frac{1}{2}m^{-3t}$, the right-hand side in the requirement on $m$ from this lemma does not exceed
	$$2 \left(\frac{3}{2}\right)^{2}C_1(2)\left(\ln\left(\frac{32}{3}m^{3t}\right) \right)^{2C_2(2)}$$
	and, therefore, the condition \eqref{eq: bound 2 on m, case n > poly(t)log(m)} implies that the assumption on $m$ from Lemma \ref{lemma: t-intersectionness of approximation} is satisfied.
	Finally, the right-hand side in the requirement on $n$ from Lemma \ref{lemma: t-intersectionness of approximation} does not exceed
	$$2q+t+\frac{2\ln\left(\frac{32}{3}m^{3t}\right)}{\ln 2} \leq t + \frac{6t^2\ln m}{\ln \left(\frac{9}{8}\right)} + 2 + \frac{6t\ln m + 2 \ln \left(\frac{32}{3}\right)}{\ln 2}$$
	and, therefore, this assumption is satisfied by \eqref{eq: bound on n, case n > poly(t)log(m)}.
	Thus, we may apply Lemma \ref{lemma: t-intersectionness of approximation} and get $x_1, x_2 \in \mathcal{F}$ with $\agr(x_1, x_2) = t-1$, a contradiction. Hence, for all $i, j \in [l]$ we have
    $\agrparam{Z_i \cap Z_j}{x_i}{x_j} \geq t$.
    %$\agr(x_i|_{Z_i \cap Z_j},  x_j|_{Z_i \cap Z_j}) \geq t$.
	
	\textbf{Step $3$.} Assume, again for the sake of contradiction, that there is no $Z \subset [n]$ and $x \in [m]^{Z}$ such that $|Z| = t$ and for all $i \in [l]$ we have $Z \subset Z_i$ and $x = x_i|_{Z}$. Requirement \eqref{eq: bound 3 on m, case n > poly(t)log(m)} implies that we may apply Corollary \ref{corollary: simplification argument} for $\varepsilon = \frac{1}{2}$. Thus, we get $$\left|\bigcup_{i \in [l]}\mathcal{F}[Z_i \rightarrow x_i]\right| \leq \left|\bigcup_{i \in [l]}[m]^n[Z_i \rightarrow x_i]\right| \leq \frac{1}{2} m^{n-t}$$
	and, therefore, $\mathcal{F} \leq \frac{1}{2}m^{n-t} + m^{n-3t}$. Since it contradicts the assumption $|\mathcal{F}| \geq m^{n-t}$, we conclude, that there exist $Z \subset [n]$ and $x \in [m]^{Z}$ such that $|Z| = t$ and for all $i \in [l]$ we have $Z \subset Z_i$ and $x = x_i|_{Z}$. Therefore, we have $\mathcal{F} \subset \mathcal{F}[Z \rightarrow x] \cup \mathcal{R}$ and
	$$|\mathcal{F} \setminus [m]^n[Z\rightarrow x]| \leq |\mathcal{R}| \leq m^{n-3t}.$$

    \textbf{Step $4$.} Finally, we apply Lemma \ref{lemma: almost all to all} and get $\mathcal{F} = [m]^{n}[Z \rightarrow x]$.
\qed

Although explicitly stated conditions \eqref{eq: bound 1 on m, case n > poly(t)log(m)}, \eqref{eq: bound 2 on m, case n > poly(t)log(m)}, \eqref{eq: bound 3 on m, case n > poly(t)log(m)}, \eqref{eq: bound on n, case n > poly(t)log(m)} in Theorem \ref{theorem: n > poly(t) ln m} are convenient to follow the proof, when we apply the theorem, it may be difficult to check four lengthy requirements. Thus, we conclude this section with an easier to use qualitative corollary of Theorem \ref{theorem: n > poly(t) ln m}.

\begin{corollary}
    \label{corollary: n > poly(t) ln m}
    There exist a polynomial $n_0(t)$ of degree $2$ and a polynomial $m_0(t)$ of degree at most $300$ such that the following holds.
    If $n \geq n_0(t)\ln(m)$, $m \geq m_0(t)$, and $\mathcal{F} \subset [m]^n$ is such a family that $\agr(x, y) \neq t - 1$ for any $x, y \in \mathcal{F}$ and $|\mathcal{F}| \geq m^{n-t}$, then there exist $Z \subset n$ and $x \in [m]^{Z}$ such that $|Z|=t$ and $\mathcal{F} = [m]^{n}[Z \rightarrow x]$.
\end{corollary}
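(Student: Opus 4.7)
The strategy is to invoke Theorem~\ref{theorem: n > poly(t) ln m} directly: it suffices to exhibit polynomials $n_0(t)$ of degree $2$ and $m_0(t)$ of degree at most $300$ such that whenever $n \ge n_0(t)\ln m$ and $m \ge m_0(t)$, all four quantitative conditions \eqref{eq: bound on n, case n > poly(t)log(m)}, \eqref{eq: bound 1 on m, case n > poly(t)log(m)}, \eqref{eq: bound 2 on m, case n > poly(t)log(m)}, \eqref{eq: bound 3 on m, case n > poly(t)log(m)} of that theorem are satisfied. The conclusion of the corollary is then identical to the conclusion of the theorem.

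For condition~\eqref{eq: bound on n, case n > poly(t)log(m)}, the right-hand side is $\tfrac{6}{\ln(9/8)} t^2 \ln m + O(t \ln m + 1)$, so any $n_0(t) = C t^2$ with a sufficiently large absolute constant $C$ works, yielding the required degree-$2$ bound. For conditions~\eqref{eq: bound 1 on m, case n > poly(t)log(m)} and~\eqref{eq: bound 3 on m, case n > poly(t)log(m)}, the right-hand sides grow like $O(t^3 \ln m)$ and $O(t^2 \ln m)$ respectively; as soon as $m \ge t^4$ one has $\ln m \le 4 \ln t$, and both conditions are comfortably subsumed by a polynomial lower bound on $m$.

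The dominant constraint is~\eqref{eq: bound 2 on m, case n > poly(t)log(m)}, which takes the form
\begin{align*}
m \ge K \bigl(\ln(32/3) + 3t \ln m\bigr)^{D},
\end{align*}
where $K = 2(3/2)^2 C_1(2)$ and $D = 2 C_2(2)$ are absolute constants determined in Lemma~\ref{lemma: cross matching for significant measure} from the explicit formulas of Lemmas~\ref{lemma: boosting measure step} and~\ref{lemma: boosting measure}. The plan is to choose $m_0(t) = C_m t^{D+1}$ with a sufficiently large $C_m$; then $\ln m \le (D+1)\ln t + \ln C_m$ on the range $m \ge m_0(t)$, and the right-hand side of~\eqref{eq: bound 2 on m, case n > poly(t)log(m)} is $O\!\bigl(t^D (\ln t)^D\bigr)$, which is itself at most $m_0(t) \le m$ once $t$ is large enough. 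The main (and essentially only) obstacle is the numerical bookkeeping required to track the constants $c(2), c_1(2), c_2(2), C_1(2), C_2(2)$ through the chain of lemmas and verify that one may take $D+1 \le 300$; no new idea is needed beyond the careful substitution $\tau = 2, b = 2$ into the formulas established earlier, and this computation is where the specific exponent ``$300$'' in the statement originates.
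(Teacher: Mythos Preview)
Your overall approach matches the paper's: invoke Theorem~\ref{theorem: n > poly(t) ln m} and check that the four conditions follow from $n \ge n_0(t)\ln m$ and $m \ge m_0(t)$. However, there are two genuine gaps.

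First, Theorem~\ref{theorem: n > poly(t) ln m} assumes $t \ge 2$, so the case $t = 1$ must be handled separately; the paper dispatches it by citing the known result for intersecting codes.

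Second, your treatment of condition~\eqref{eq: bound 2 on m, case n > poly(t)log(m)} is flawed. You write that if $m_0(t) = C_m t^{D+1}$ then ``$\ln m \le (D+1)\ln t + \ln C_m$ on the range $m \ge m_0(t)$'', but the inequality goes the wrong way: for $m \ge m_0(t)$ one only has $\ln m \ge (D+1)\ln t + \ln C_m$, and $m$ (hence $\ln m$) may be arbitrarily large. Consequently the right-hand side of~\eqref{eq: bound 2 on m, case n > poly(t)log(m)}, which depends on $\ln m$, is \emph{not} bounded by a function of $t$ alone, and your comparison with $m_0(t)$ does not go through as stated. The correct argument is the standard one the paper invokes implicitly: an inequality of the form $m \ge P(t,\ln m)$ with $P$ polynomial of degree $d$ in its first argument is implied by $m \ge Q(t)$ for a suitable polynomial $Q$ of the same degree $d$, because $(\ln m)^{D} = o(m)$ allows one to absorb the $\ln m$ factors into the left-hand side (e.g.\ use $\ln m \le C_D\, m^{1/(2D)}$ for $m$ beyond an absolute threshold, so that $K(3t\ln m)^{D} \le K' t^{D} m^{1/2} \le m$ once $m \ge (K')^{2} t^{2D}$). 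With this fix, the degree bound $2C_2(2) < 300$ is indeed the controlling one, as you and the paper both note.
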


\begin{proof}
    We need to check that conditions of Corollary \ref{corollary: n > poly(t) ln m} imply the conditions of Theorem \ref{theorem: n > poly(t) ln m}. The case $t=1$, that is, the case of intersecting family of codes, is considered in \cite{EKR_for_codes}. Thus, we may assume $t \geq 2$. Requirements \eqref{eq: bound 1 on m, case n > poly(t)log(m)}, \eqref{eq: bound 2 on m, case n > poly(t)log(m)}, \eqref{eq: bound 3 on m, case n > poly(t)log(m)} from Theorem \ref{theorem: n > poly(t) ln m} have a form $m \geq \text{poly}(t, \log m)$ and the polynomial from the requirement \eqref{eq: bound 2 on m, case n > poly(t)log(m)} has the highest degree $2C_2(2) < 300$ in $t$. Thus, there exists a polynomial $m_0(t)$ of degree at most $300$ such that $m \geq m_0(t)$ implies requirements \eqref{eq: bound 1 on m, case n > poly(t)log(m)}, \eqref{eq: bound 2 on m, case n > poly(t)log(m)}, \eqref{eq: bound 3 on m, case n > poly(t)log(m)}. Finally, for $m > e$ the condition \eqref{eq: bound on n, case n > poly(t)log(m)} is a consequence of a condition $n > n_0(t)\ln(m)$, where
    $$n_0(t) = t + \frac{6t^2}{\ln \left(\frac{9}{8}\right)} + 2 + \frac{4t + 2 \ln \left(\frac{32}{3}\right)}{\ln 2}$$
    is a polynomial of degree $2$.
\end{proof}

\section{Case $m \ge n^{\poly(t)}, n \ge \poly(t)$}
\label{section: classical extremal set theory case}

In this section, we once again treat $[m]^n$ as a subset of $[mn] \choose n$. 

We prove the following theorem:

\begin{theorem} \label{m_geq_nt}
	Let $1 \le t \le n+1, n \ge50t^2, m \geq (2^{22}n)^{4t}$.
	Let $\mathcal{F} \subset [m]^n$ be such a family, that $\agr(x, y) \neq t - 1$ for any $x, y \in \mathcal{F}$ and $|\mathcal{F}| \geq m^{n-t}$. Then $|\mathcal{F}| = m^{n-t}$, moreover, there exists $Z \subset n$ and $x \in [m]^{Z}$ such that $|Z|=t$ and $\mathcal{F}$ coincides with $[m]^n[Z\rightarrow x]$. 
\end{theorem}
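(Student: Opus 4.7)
The proof follows the roadmap outlined at the end of Section~\ref{section: preliminaries}: first the spread approximation and $(\mathcal{S},2,t)$-system machinery of Lemmas~\ref{lemma: spread approximation}, \ref{get_sst} and \ref{get_T} is used to cover $\mathcal{F}$ by a $\poly(n,t)$-size family of candidate $t$-stars; then a geometric scaling decomposition combined with Corollary~\ref{KK_contra} isolates a single dominant $t$-star; finally Lemma~\ref{lemma: almost all to all} upgrades the conclusion to exact equality.

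In the first step I apply Lemma~\ref{lemma: spread approximation} to $\mathcal{F}$ with $\tau=2$ and $q$ chosen of suitable order in $t$ and $\log m$, obtaining a decomposition $\mathcal{F} = \mathcal{R} \sqcup \bigsqcup_{S\in\mathcal{S}} \mathcal{F}_S$ with a small residue. I then invoke Lemma~\ref{get_sst} with $\alpha$ just below $(4n)^{-1}$ to pass to refined families $\mathcal{U}_S \subset \mathcal{F}_S$ forming an $(\mathcal{S},2,t)$-system; its quantitative hypothesis $m > \max\{2^{15}\lceil\log_2 n\rceil, 2q\}\cdot 2\alpha^{1-t}\tau^t$ is comfortably satisfied under $m \ge (2^{22}n)^{4t}$. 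Property~3 of Lemma~\ref{get_sst} at $h = t-1$ yields a shadow bound $|\partial_{t-1}\mathcal{U}_S| \ge \lambda\,|\partial_{t-1}[m]^n|$ with $\lambda$ of order $8^{-t}$ (using $\tau' \le 2$ and $1-2\alpha n \ge 1/2$), and Lemma~\ref{get_T} then extracts a family $\widehat{\mathcal{T}} \subset \partial_t[m]^n$ of size $\poly(n,t)$ such that $\sum_{S\notin \mathcal{S}[\widehat{\mathcal{T}}]}|\mathcal{U}_S|$ is negligible compared to $m^{n-t}$.

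In the second step I let $T^* \in \widehat{\mathcal{T}}$ maximize $|\mathcal{F}\cap [m]^n[T^*]|$ and set $\mathcal{G} = \mathcal{F} \setminus [m]^n[T^*]$, aiming to show $|\mathcal{G}| \le m^{n-3t}$. For any $T = (Z,z) \in \widehat{\mathcal{T}}\setminus\{T^*\}$ with $T^* = (Z^*,z^*)$ and $s = \agr_{Z^*\cap Z}(z^*, z) \le t-1$, a direct counting argument shows that for each $f^* \in [m]^n[T^*]$ there are about $(n/m)^{t-1-s}\, m^{n-t}$ codes in $[m]^n[T]$ whose agreement with $f^*$ equals exactly $t-1$, and the $(t-1)$-avoidance property forces all such codes (for every $f^* \in \mathcal{F}\cap [m]^n[T^*]$) to lie in $[m]^n[T]\setminus\mathcal{F}$. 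Combined with Corollary~\ref{KK_contra} applied to $\mathcal{F}\cap[m]^n[T]$ at a suitable shadow level, this yields an upper bound on $|\mathcal{F}\cap [m]^n[T]|$ roughly of order $(n/m)^{t-1-s}\,m^{n-t}$; dyadically grouping the $T\in\widehat{\mathcal{T}}\setminus\{T^*\}$ by scales of $|\mathcal{F}\cap[m]^n[T]|/m^{n-t}$ and summing their contributions, together with the residue from Step~1, produces $|\mathcal{G}|\le m^{n-3t}$ with substantial slack thanks to the bound $m\ge(2^{22}n)^{4t}$ and $|\widehat{\mathcal{T}}| = \poly(n,t)$.

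In the third step I apply Lemma~\ref{lemma: almost all to all} with $(Z,x) = (Z^*,z^*)$; its hypotheses ($m\ge 8$, $|\mathcal{F}|\ge m^{n-t}$, $|\mathcal{F}\setminus [m]^n[T^*]|\le m^{n-3t}$) are met, and the lemma concludes $\mathcal{F} = [m]^n[T^*]$, whence $|\mathcal{F}| = m^{n-t}$. The hard part is the second step, where the geometric scaling decomposition must be delicate enough that the accumulated error fits within the $m\ge(2^{22}n)^{4t}$ budget after all residues from Step~1 and all cross-contributions from the non-dominant $t$-stars are accounted for.
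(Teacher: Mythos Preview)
Your overall three-step architecture matches the paper's, but Step~1 contains a fatal parameter choice. You take $\tau=2$ in Lemma~\ref{lemma: spread approximation}; the residue then satisfies only $\mu(\mathcal{R})\le 2^{-q}$. For $\mathcal{R}$ to be negligible next to $m^{n-t}$ you need $2^{-q}\le m^{-t}$, i.e.\ $q\ge t\log_2 m$. But Theorem~\ref{m_geq_nt} places no upper bound on $m$: under $m\ge (2^{22}n)^{4t}$ we already have $\log_2 m\ge 4t(22+\log_2 n)$, so $q\ge 4t^2(22+\log_2 n)>n$ once $n\ge 50t^2$, and for larger $m$ the required $q$ grows without bound. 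This destroys the hypothesis $n\ge q+t-1$ of Lemma~\ref{get_sst} and makes the core $\mathcal{S}$ meaningless (every code lies in its own restriction). The paper avoids this precisely by taking $\tau=m^{1/12t}$ and $q=24t^2$, so that $\tau^{-q}=m^{-2t}$ with $q$ depending only on $t$; this is the one place in the whole argument where the assumption $m\ge n^{\Theta(t)}$ is essential, and a constant $\tau$ cannot substitute for it.

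Your Step~2 is also too sketchy to stand as written. The paper does not use the counting heuristic you describe; instead it introduces the auxiliary families $\mathcal{F}^*_T\subset\mathcal{F}_T(T)$ (those $F$ all of whose small restrictions have covering number $>n$), proves that the $(t-1)$-shadows $\partial_{t-1}\mathcal{F}^*_T$ are pairwise disjoint for distinct $T\in\widetilde{\mathcal{T}}$, and then uses Kruskal--Katona (Theorem~\ref{KK_direct} and Corollary~\ref{KK_contra}) on these disjoint shadows to run the geometric scaling decomposition. The disjointness of shadows is what makes the sum over non-dominant $T$ collapse; your proposed counting of codes in $[m]^n[T]$ with prescribed agreement to a fixed $f^*\in[m]^n[T^*]$ does not obviously yield a usable bound on $|\mathcal{F}\cap[m]^n[T]|$, since different $f^*$ produce overlapping forbidden sets and the complement argument you hint at does not directly interface with Corollary~\ref{KK_contra}.
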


To prove this theorem, we will reduce a maximal $(t-1)$-avoiding family $\ff$ to an $(\cS, 2, t)$-system. In Section \ref {subsection: get cT}, we will prove a lemma that uses this system to obtain some nice decomposition of $\ff$ up to a small remainder. In Section  \ref{subsection: properties of decomposition}, we will derive useful properties of this decomposition and special subfamilies of its parts. In Section \ref{subsection: bound sizes}, we will prove that, given such a decomposition, one may conclude that a large part of $\ff$ is contained in $[m]^n[T]$ for some $t$-element $T$. Lastly, in Section \ref{subsection: proof of m_geq_nt}, we combine the results of previous subsections to prove the main theorem of this section.

\subsection{Decomposition via an $(\cS, 2, t)$-system} \label{subsection: get cT}
We will need the following corollary of Lemma \ref {get_T}. Later we will apply this corollary to an $(\mathcal{S}, 2, t)$-system $\mathcal{U}$ obtained using Lemma \ref{get_sst}.  The particular bounds on $n, m, q$ could be explained as follows. We start the proof of the main theorem with constructing a $\tau$-homogeneous core consisting of sets of uniformity not greater than $q$. Since not all sets of the original family contain an element of the core, we have to deal with a remainder $\cR$. Our technique allows us to bound its measure by $\tau^{-q}$, and we aim to bound it as $|\cR| \le m^{n-2t}$, so we need $\tau^q > m^{2t}$.

In terms of Lemma \ref{get_T} the parameter $\lambda$\ shall not be too small to get good bounds on $|\widetilde{\mathcal{T}}|$ and $\sum_{S \in \mathcal{S}\setminus \mathcal{S}[\widetilde{\mathcal{T}}]}|\mathcal{U}_S|$ , i.e. $\lambda = m^{-const}$. For such a $\lambda$ to meet conditions of Lemma \ref{get_T} the parameter $\tau$ has to be equal to $\tau = m^{-const/t}$. In order to meet $\tau^q > m^{2t}$ we need $q$ to be a polynomial of degree $2$ depending on $t$. The fact that all sets of the core should have uniformity less than $n$ motivates that $n$ is also bounded by a quadratic polynomial depending on $t$. Finally, we take $m$ to be `big enough' of the form $n^{\poly(t)}$, and we did not try to optimize this parameter.

\begin{corollary}\label{corollary: get T}
    Let $m, n, t, q$ be such integers that $t \ge 1, n \ge 50t^2, m \ge (2^{22}n)^{4t}, q = 24t^2$. Let $\mathcal{U} \subset [m]^n$ be an $(\mathcal{S}, 2, t)$-system and $\{\mathcal{U}_S\}_{S \in \mathcal{S}}$ be the decomposition of $\mathcal{U}$ from the definition of an $(\mathcal{S},s, t)$-system satisfying \textit{(1 --- 3)} of Lemma \ref{get_sst} with $\alpha = (5n)^{-1}, \tau=m^{1/12t}$. Then there exists a family $\widetilde{\mathcal{T}} \subset \partial_t[m]^n$, such that  \begin{equation}\label{eq_T_hat}|\widetilde{\mathcal{T}}| \le m^{1/3}\ln m.\end{equation} Moreover,  \begin{equation}\label{eq_U_without_T}\sum_{S \in \mathcal{S}\setminus \mathcal{S}[\widetilde{\mathcal{T}}]}|\mathcal{U}_S| \le m^{n-t-1/3}.\end{equation}
\end{corollary}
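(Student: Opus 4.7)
The plan is to apply Lemma~\ref{get_T} directly to the system $\mathcal{U}$, with a carefully chosen value of $\lambda$. The hypotheses $\tau = m^{1/12t}$ and $q = 24t^{2}$ inherited from Lemma~\ref{get_sst} are tuned so that $\tau^{q}=m^{2t}$ (used elsewhere to control the spread-approximation remainder), while simultaneously keeping $\lambda^{-1}$ no larger than roughly $m^{1/12}$, which is exactly what will make both target bounds go through.

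First I would convert property~(3) of Lemma~\ref{get_sst}, which lower bounds $|\partial_{t-1}\mathcal{U}_S|$ in terms of $|\partial_{t-1}[m]^n(S)|$, into a bound in terms of $|\partial_{t-1}[m]^n|$, as required by Lemma~\ref{get_T}. Substituting $\alpha=(5n)^{-1}$ gives $1-2\alpha n = 3/5$, and the elementary estimate
\[
\frac{\binom{n-|S|}{t-1}}{\binom{n}{t-1}} \geq 2^{-(t-1)},
\]
valid because $|S|\leq q=24t^{2}$ and $n\geq 50t^{2}$ together imply $2q+t-2\leq n$, converts this into
\[
|\partial_{t-1}\mathcal{U}_S| \geq (10\tau/3)^{-(t-1)}\,|\partial_{t-1}[m]^n|
\]
for every $S\in\mathcal{S}$. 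This is the hypothesis of Lemma~\ref{get_T} with $\lambda := (10\tau/3)^{-(t-1)}$, and the resulting $\lambda$ satisfies $\lambda^{-1}\leq 4^{t}m^{1/12}$.

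Next I would verify the remaining prerequisite $m\geq 8q^{3}$, which is trivial from $m\geq (2^{22}n)^{4t}$, and substitute $\lambda$ into the two conclusions of Lemma~\ref{get_T}. The crude estimate $|\partial_{\leq q}[m]^n|\leq (nm)^{q}$ combined with $n\leq m$ gives $\ln(\lambda|\partial_{\leq q}[m]^n|)\leq 2q\ln m$. The assumption $m\geq (2^{22}n)^{4t}$ implies $m\geq 2^{88t}$, hence $t\leq \log_{2} m/88$ and $4^{t}\leq m^{1/44}$. Feeding these into the two bounds of Lemma~\ref{get_T} produces an expression of order $m^{1/12+1/44}$ times $\poly(t)\cdot\ln m$ for $|\widetilde{\mathcal{T}}|$, and an expression of order $m^{n-t-11/12+1/44}$ times similar factors for $\sum_{S\notin \mathcal{S}[\widetilde{\mathcal{T}}]}|\mathcal{U}_S|$, both comfortably inside the required targets $m^{1/3}\ln m$ and $m^{n-t-1/3}$.

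The only real work is bookkeeping: one must chain $m\geq 2^{88t}$, $t\leq \log_{2} m/88$, and $4^{t}\leq m^{1/44}$ so that every factor of the form $4^{t}$, $\poly(t)$, or $\ln m$ on the left-hand side is visibly dominated by the gap in exponents on the right ($1/3-1/12>1/4$ for the first bound, $11/12-1/3>1/2$ for the second). Beyond this routine verification, no combinatorial obstacle arises in the argument.
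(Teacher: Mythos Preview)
Your proposal is correct and follows essentially the same approach as the paper: both apply Lemma~\ref{get_T} directly after converting property~(3) of Lemma~\ref{get_sst} into the required lower bound $|\partial_{t-1}\mathcal{U}_S|\ge\lambda|\partial_{t-1}[m]^n|$, then substitute into the two conclusions and absorb polynomial-in-$t$ and $\ln m$ factors using the gap in exponents. The only cosmetic difference is the choice of $\lambda$: the paper fixes $\lambda=m^{-1/6}$ and verifies it is below the shadow lower bound, whereas you take $\lambda=(10\tau/3)^{-(t-1)}$ to be the lower bound itself; since your $\lambda^{-1}\le 4^t m^{1/12}\le m^{1/44+1/12}<m^{1/6}$, the two choices lead to the same final estimates.
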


\begin{proof}

We apply Lemma \ref{get_T} directly with $\lambda = m^{-1/6}$. We start with verifying the assumptions. First we note that, in fact, $m \ge (2^{22}n)^{4t} \ge(2^{22}\cdot 50t^2)^{4t}\ge 8 \cdot 24^3 \cdot t^6 = 8q^3$. Then we verify that the assumption of Lemma \ref{get_T} holds for $\lambda$ chosen before. Put $\tau' \le \tau = m^{1/12t}$ to be the minimal homogenity of $\ff_S, S \in \mathcal{S}$. Since \textit{(3)} of Lemma \ref{get_sst} is satisfied, we have $$|\partial_{t-1}(\mathcal{U}_S)| \ge \left(\frac{\tau'}{1 - 2\alpha n}\right)^{-(t-1)}|\partial_{t-1}([m]^n(S))|.$$

For all $(Z, x) = S \in \mathcal{S}$ we have $|Z| \le q$, and it is easy to see that $|\partial_{t-1}([m]^n(S))| \ge |\partial_{t-1}([m]^{n-q})| = {n-q \choose t-1}\cdot m^{t-1}$, where the last equality is due to $n - q \ge t - 1$. Also recall that $\tau' \le m^{1/12t}$, %\textcolor{blue}{А откуда ограничение на $\tau'$?}
$\alpha = (5n)^{-1}$ and $|\partial_{t-1}([m]^n)| = {n\choose t-1}\cdot m^{t-1}$. Thus, we have $$ |\partial_{t-1}(\mathcal{U}_S)| \ge \left(\frac{3/5}{\tau'}\right)^{t - 1}\cdot {n-q \choose t-1}\cdot m^{t-1} \ge \left(\frac{3/5}{m^{1/12t}}\right)^{t - 1}\cdot {n-q \choose t-1}\cdot m^{t-1}.$$

It remains to show that $$\left(\frac{3/5}{m^{1/12t}}\right)^{t - 1}\cdot {n-q \choose t-1} \ge m^{-1/6} {n \choose t-1} $$ which is equivalent to $$ m^{1/6} \ge \left(\frac{m^{1/12t}}{3/5}\right)^{t-1}\cdot \frac{n\cdot \ldots \cdot (n - t + 2)}{(n - q) \cdot \ldots \cdot(n - q - t + 2)}.$$ For all $0 \le i \le t-2$ we have $(n-i)/(n-i-q) < 2$, because $n \ge 50t^2\ge t -2 + 48t^2 = t-2+2q$. Thus, it is enough to prove the following bound: $$m^{1/6} \ge \left(\frac{m^{1/12t}}{3/5}\right)^{t-1}\cdot 2^{t-1}.$$ It is equivalent to $$m \ge m^{(t-1)/2t} \cdot \left(\frac{2^6 \cdot 5^6}{3^6}\right)^{t-1},$$ which is true since $$m\ge m^{1/2}\cdot m^{1/2} \ge m^{(t-1)/2t}\left(\frac{2^6 \cdot 5^6}{3^6}\right)^{t-1}.$$

%\textcolor{blue}{Тут должно быть $2^6 \cdot 5^6$?}

Therefore, Lemma \ref{get_T} is applicable. We obtain a set of restrictions $\widetilde{\mathcal{T}}$ such that for any $(Z, x) \in \widetilde{\mathcal{T}}$ we have $|Z| = t$. We have $$ |\widetilde{\mathcal{T}}| \le \frac1\lambda(1 +2\ln(\lambda|\partial_{\le q}[m]^n|)) = \frac{1 + 2\ln{\left(m^{-1/6} \cdot \sum_{p = 0}^q{n \choose p}m^p\right)}}{m^{-1/6}} \leq
$$ $$\leq  \frac{1 + 2\ln{\left(2m^{-1/6} \cdot n^q m^q\right)}}{m^{-1/6}} \le \frac{1 + 2q(\ln m + \ln n)}{m^{-1/6}} \le 5qm^{1/6}\ln m \leq m^{1/3}\ln m,$$ where the last inequality holds because $m \ge (2^{22}n)^{4t} \ge (2^{22}\cdot 50t^2)^{4t}\ge 5^6\cdot 24^6\cdot t^{12} = (5q)^6$. Also, Lemma \ref{get_T} implies $$\sum_{S \in \mathcal{S}\setminus \mathcal{S}[\hat{\mathcal{T}}]}|\mathcal{U}_S| \le \frac{32q^3}{\lambda m}\ln(\lambda|\partial_{\le q}[m]^ n|)\cdot m^{n-t} = \frac{32q^3}{m^{5/6}}\ln{\left(m^{-1/6} \cdot \sum_{p = 0}^q{n \choose p}m^p\right)}\cdot m^{n-t} \le$$ $$\le \frac{128q^4 \ln m}{m^{5/6}}\cdot m^{n-t}\le m^{n-t-1/3}.$$ The last inequality holds because $$m^{1/2} = m^{1/4}\cdot m^{1/4} \ge (2^{22}n)^t\cdot \frac{\ln m}{4} \ge 128q^4\ln m,$$ where we used $m^{1/4} = e^{1/4 \ln m} \ge 1 + \frac{1}{4} \ln m$,  $(2^{22}n)^t \ge 2^9(24t^2)^4$ and $$(2^{22}n)^t \ge (2^{22}\cdot 50t^2)^t \ge 2^{22t}t^8 \ge 2^9\cdot 24^4 \cdot t^8.$$
\end{proof}

\subsection{Properties of incomplete $t$-stars}  \label{subsection: properties of decomposition}

In the previous section, we constructed a set of restrictions $\widetilde\cT$ which `represents' well the original family. We now study the incomplete $t$-stars which are subsets of $\ff[T]$ for $T \in \widetilde\cT$. In this section we do not use properties of $\widetilde \cT$ so we prove all the claims for arbitrary $T \in \partial_t[m]^n$, however, we will apply them only to $T \in \widetilde\cT$.

The following definition is crucial in our proof.

\begin{definition}
  Let $\ff$ be a subfamily of $[m]^n$, $T \in \partial_{t}[m]^n, \ff_T\subset \ff[T]$. We define $\ff^*_T\subset\ff(T)$ as  %For each $T = (Z_T, x_T) \in \widetilde\cT$, define 
\begin{align*}
    \ff_T^* = \left \{ F \in \ff_T(T) \mid \forall X \in \binom{F}{\le t - 1} \text{ the covering number of } \ff_T(T\cup X) \text{ is at least }  n+1 \right \}.
\end{align*}
\end{definition}

We will later see that $\ff^*_T$ is usually of almost the same size as $\ff_T(T)$, however its properties allow us to find pairs of elements of $\ff$ with fixed agreement given their shadows with the same agreement. This idea will be illustrated in the proof of the following claim.

Recall that we treat $[m]^n$ as a subset of $[mn] \choose n$. Since the proof is based on techniques originally developed for sets we use the same notation. We now prove that if $\ff$ is $(t-1)$-avoiding then $\partial_{t - 1} \ff^*_{T}$ are disjoint for distinct $T$.

\begin{claim}\label{claim: different shadows}
Let $m, n, t$ be arbitrary positive integers, $\ff \subset [m]^n$ be $(t-1)$-avoiding. For distinct $T_1, T_2 \in \partial_t\ff$ and each $F_1 \in \ff_{T_1}^*, F_2 \in \ff^*_{T_2}$ we have $|F_1 \cap F_2| < t - 1$.

\end{claim}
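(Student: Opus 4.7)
The plan is to argue by contradiction. Suppose $|F_1 \cap F_2| \ge t - 1$; I will construct $G_1 \in \ff_{T_1}$ and $G_2 \in \ff_{T_2}$ (both in $\ff$) with $|G_1 \cap G_2| = t - 1$, contradicting the $(t-1)$-avoiding hypothesis on $\ff$.

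Set $s := |T_1 \cap T_2|$; since $T_1 \ne T_2$ both have size $t$, one has $0 \le s \le t - 1$. Because $F_i \subseteq [mn] \setminus T_i$ by the definition of $\ff_{T_i}^* \subseteq \ff_{T_i}(T_i)$, the intersection $F_1 \cap F_2$ is disjoint from $T_1 \cup T_2$. Using $|F_1 \cap F_2| \ge t - 1 \ge t - 1 - s$, I pick $X \subseteq F_1 \cap F_2$ with $|X| = t - 1 - s$; then $X \subseteq F_1$, $X \subseteq F_2$, $|X| \le t - 1$, and $X$ is disjoint from $T_1 \cup T_2$. The choice $|X| = t - 1 - s$ is forced by what must happen at the end: $T_1 \cap T_2$ lies in every candidate $G_1 \cap G_2$, contributing $s$, and $X$ will contribute the remaining $t - 1 - s$, so that the total lands at exactly $t - 1$.

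By the defining property of $\ff^*_{T_i}$ applied to $F_i$ with this $X$ (legal since $X \subseteq F_i$ and $|X| \le t - 1$), the covering numbers of $\ff_{T_1}(T_1 \cup X)$ and $\ff_{T_2}(T_2 \cup X)$ are each at least $n + 1$: no set of size $\le n$ meets every member. I invoke this twice. First, since $|T_1| = t \le n$, there exists $H_2 \in \ff_{T_2}(T_2 \cup X)$ with $H_2 \cap T_1 = \emptyset$. Second, since $|T_2 \cup H_2| = t + (n - t - |X|) = n - |X| \le n$, there exists $H_1 \in \ff_{T_1}(T_1 \cup X)$ disjoint from $T_2 \cup H_2$, which gives $H_1 \cap T_2 = \emptyset$ and $H_1 \cap H_2 = \emptyset$ simultaneously.

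Finally, set $G_1 := T_1 \cup X \cup H_1 \in \ff_{T_1}$ and $G_2 := T_2 \cup X \cup H_2 \in \ff_{T_2}$. Expanding $G_1 \cap G_2$ and using $X \cap (T_1 \cup T_2) = \emptyset$, $H_i \cap (T_i \cup X) = \emptyset$ from the definition of restriction, $H_2 \cap T_1 = \emptyset$, $H_1 \cap T_2 = \emptyset$, and $H_1 \cap H_2 = \emptyset$ yields $G_1 \cap G_2 = (T_1 \cap T_2) \sqcup X$, of total size $s + (t - 1 - s) = t - 1$, which is the contradiction. The only point that needs care is the bookkeeping of all the disjointness relations, and verifying that in the second covering application the set $T_2 \cup H_2$ still has size at most $n$; the latter holds with slack equal to $|X|$, so no sharper counting is required.
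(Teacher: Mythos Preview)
Your proof is correct and follows essentially the same approach as the paper's: both argue by contradiction, fix a subset of $F_1\cap F_2$ of size $t-1-|T_1\cap T_2|$, invoke the covering-number property of $\ff_{T_i}^*$ twice to produce two elements of $\ff$ whose intersection is exactly $(T_1\cap T_2)\sqcup X$ of size $t-1$. The only cosmetic difference is the order of the two covering applications (you avoid $T_1$ first, then $T_2\cup H_2$; the paper avoids $F_2\sqcup T_2$ first, then $F_1'\sqcup T_1$), which is immaterial.
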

\begin{proof}
Assume the contrary, there exist some distinct $T_1, T_2 \in \partial_t\ff$ and $F_1 \in \ff^*_{T_1}, F_2 \in \ff^*_{T_2}$ such that $|F_1 \cap F_2| \ge t - 1$. Put $T_1 = (Z_1, x_1), T_2 = (Z_2, x_2), F_1 = ([n] \setminus Z_1, y_1), F_2 = ([n]\setminus Z_2, y_2)$.

Since $T_1$ and $T_2$ are distinct and $|Z_1| = |Z_2| = t$, for $p = \agr_{Z_1 \cap Z_2}(x_1, x_2)$ we have $0 \le p \le t - 1$. Also, $\agr_{[n]\setminus (Z_1 \cup Z_2)}(y_1, y_2) \ge t-1$. Choose  a set $Z \subset [n]\setminus(Z_1 \cup Z_2)$ of $(t - 1 - p)$ coordinates on which $y_1$ and $y_2$ agree, put $x = y_1|_Z = y_2|_Z$. Since $F_1 \in \ff^*_{T_1}, |Z| \le t-1$, we can choose $F_1' \in \ff_{T_1}(T_1)$ %(\textcolor{blue}{GS: мб нужны круглые скобки}) 
such that it avoids $F_2 \sqcup T_2$ outside of $Z$. By analogy, we choose $F_2' \in \ff_{T_2}(T_2)$ such that it avoids $F_1' \sqcup T_1$ outside of $Z$. Both $F_1'\sqcup T_1$ and $F_2'\sqcup T_2$ belong to $\ff$, however $$|(F_1'\sqcup T_1) \cap (F_2'\sqcup T_2)| = |T_1 \cap T_2| + |Z| = t - 1,$$ which leads to contradiction. %\textcolor{blue}{FN: if $\vee$ stands for the disjoint union, we should use $\sqcup$ instead.}
\end{proof}

% \textcolor{magenta}{Move to preliminaries?} To prove following facts in this subsection and further we will need a variant of Kruskal--Katona theorem for $[m]^n$ from \cite{frankl_shadows_1988}. 

Set $\delta_0 = e^{- n/t^2}$. Recall that our goal is to show that a maximal $(t-1)$-avoiding finally should be a complete $t$-star. In order to show that, we will prove that at most one of the restrictions from $\widetilde\cT$ covers a big part of the original family. Later, we will prove that in fact this restriction is the center of a $t$-star which coincides with the original family. We are ready to prove the following proposition.
\begin{proposition}
Let $m, n, t$ be such positive integers that $n \ge 50t^2$. Let $\ff \subset [m]^n$ be $(t-1)$-avoiding. Then there are no two distinct $T_1, T_2 \in \partial_t\ff$ such that $\mu^{-T_1}(\ff^*_{T_1}) \ge \delta_0$ and $\mu^{-T_2}(\ff^*_{T_2}) \ge \delta_0$.
\end{proposition}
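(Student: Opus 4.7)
The plan is to derive a contradiction by projecting the two subfamilies to their common coordinate block and applying Kruskal--Katona for codes (Theorem \ref{KK_direct}). Write $T_i = (Z_i, x_i)$ for $i = 1, 2$, set $W = Z_1 \cup Z_2$ and $N = n - |W|$, so $n - 2t \le N \le n - t$. The case $t = 1$ is degenerate: Claim \ref{claim: different shadows} then reads $|F_1 \cap F_2| < 0$, forcing one of $\ff^*_{T_1}, \ff^*_{T_2}$ to be empty, which directly contradicts $\delta_0 > 0$. So from now on I assume $t \ge 2$.

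Suppose for contradiction that $\mu^{-T_i}(\ff^*_{T_i}) \ge \delta_0$ for both $i$, and let $\cG_i \subset [m]^{[n]\setminus W}$ be the image of $\ff^*_{T_i}$ under the restriction map $F \mapsto F|_{[n]\setminus W}$. Every $g \in \cG_i$ has at most $m^{|W \setminus Z_i|}$ preimages in $\ff^*_{T_i}$, which immediately yields $\mu_{[n]\setminus W}(\cG_i) \ge \mu^{-T_i}(\ff^*_{T_i}) \ge \delta_0$. Since $F_i \in \ff^*_{T_i}$ is supported on $[n] \setminus Z_i$, we have $F_1 \cap F_2 \subset \bigsqcup_{j \in [n]\setminus W} I_j$, so that $g_1 \cap g_2 = F_1 \cap F_2$ for any preimages $F_i$ of $g_i$. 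Claim \ref{claim: different shadows} therefore gives $|g_1 \cap g_2| \le t - 2$ for every $g_i \in \cG_i$.

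This forces $\partial_{t-1} \cG_1$ and $\partial_{t-1} \cG_2$, as subfamilies of $\partial_{t-1} [m]^{[n]\setminus W}$, to be disjoint: a common $(t-1)$-element set $X$ would yield $g_i \supset X$ in each $\cG_i$, hence $|g_1 \cap g_2| \ge t - 1$. Applying Kruskal--Katona for codes to each $\cG_i$ and summing the resulting lower bounds gives
\begin{align*}
    \mu_{[n]\setminus W}(\cG_1)^{(t-1)/N} + \mu_{[n]\setminus W}(\cG_2)^{(t-1)/N} \le 1,
\end{align*}
so $2\delta_0^{(t-1)/N} \le 1$, equivalently $n(t - 1) \ge N t^2 \ln 2$.

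The remaining step is an elementary inequality check: using $N \ge n - 2t$, the derived bound $n(t - 1) \ge (n - 2t) t^2 \ln 2$ rearranges to $n(t^2 \ln 2 - (t - 1)) \ge 2 t^3 \ln 2$; since $t^2 \ln 2 - (t - 1)$ is bounded below by a positive multiple of $t^2$ for every $t \ge 2$, this forces $n$ to exceed only a quantity of order $t$, which is comfortably contradicted by $n \ge 50 t^2$. The only qualitative thing to keep in mind is that Kruskal--Katona at the $(t-1)$-shadow level enforces $\mu \le e^{-\Theta(n/t)}$, whereas the hypothesis merely requires $\mu \ge e^{-n/t^2}$, so the quadratic scaling $n \ge 50 t^2$ is precisely what produces the required factor-$t$ gap between these two exponents; there is no hidden difficulty beyond this calculation.
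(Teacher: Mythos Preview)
Your proof is correct and follows essentially the same line as the paper's: pass both families to the common coordinate block $[n]\setminus(Z_1\cup Z_2)$, invoke Claim~\ref{claim: different shadows} to obtain disjoint $(t-1)$-shadows, and apply Kruskal--Katona (Theorem~\ref{KK_direct}) to force a contradiction with $\delta_0 = e^{-n/t^2}$. The only cosmetic difference is that the paper uses averaging (Claim~\ref{claim: averaging argument}) to pick a single good fiber over $W\setminus Z_i$, whereas you take the full projection and bound preimage counts; both yield $\mu_{[n]\setminus W}(\cG_i)\ge\delta_0$.

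One small slip in the final algebra: rearranging $n(t-1)\ge(n-2t)t^2\ln 2$ gives $n\bigl(t^2\ln 2-(t-1)\bigr)\le 2t^3\ln 2$, not $\ge$; with the correct sign this forces $n=O(t)$, which is exactly the contradiction with $n\ge 50t^2$ that you intend.
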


\begin{proof}

Assume the contrary. Put $T_1 = (Z_1, x_1), T_2 = (Z_2, x_2)$. By averaging (Claim \ref{claim: averaging argument}), we find restrictions $y_1 \in [m]^{Z_2\setminus Z_1}$ and $y_2 \in [m]^{Z_1 \setminus Z_2}$ such that $$\mu_{[n]\setminus Z_1\cup Z_2}(\ff^*_{T_1}(Z_2\setminus Z_1 \to y_1)) \ge \delta_0; \qquad \mu_{[n]\setminus Z_1\cup Z_2}(\ff^*_{T_2}(Z_1\setminus Z_2 \to y_2)) \ge \delta_0.$$ Put $\cG_1 = \ff_{T_1}^*(Z_2 \setminus Z_1 \to y_1), \cG_2 = \ff_{T_2}^*(Z_1\setminus Z_2 \to y_2)$. By Theorem \ref{KK_direct}, we have %\textcolor{blue}{Далее $\delta$ --- это $\delta_0$?}
$$|\partial_{t-1}\cG_1| \ge\delta_0^{\frac{t-1}{n - |Z_2 \cup Z_1|}}\cdot|\partial_{t-1}m^{[n]\setminus Z_1\cup Z_2}| = $$ $$ =e^{-\frac{n(t-1)}{t^2(n - |Z_2 \cup Z_1|)}}\cdot |\partial_{t-1}m^{[n]\setminus Z_1\cup Z_2}| > $$ $$> \frac12{n - |Z_2 \cup Z_1| \choose t-1}m^{t - 1}.$$ The last inequality holds trivially for $t=1$. For $t \ge 2$ we have $n/t(n - 2t) \le 0.6 < \ln(2)$ and therefore
$$e^{-\frac{n(t-1)}{t^2(n - |Z_2 \cup Z_1|)}} \ge e^{-\frac{n}{t(n - 2t)}} > e^{-\ln2 }\ge \frac12.$$
%\textcolor{blue}{GS: if we use $n - 2t > n/2$, we get $e^{-2/t}$ in the second inequality. Probably, $t \ge 2$ and $n/t(n - 2t) \le 0.6 < \ln(2)$.}

Analogously, $|\partial_{t-1}\cG_2| \ge \frac12{n - |Z_1 \cup Z_2| \choose t-1}m^{n - |Z_1 \cup Z_2|}.$ Since $\partial_{t-1}\cG_1, \partial_{t-1}\cG_2 \subset \partial_{t-1}[m]^{n\setminus (Z_1\cup Z_2)}$, the intersection of $\partial_{t-1}\cG_1$ and $\partial_{t-1}\cG_2$ is not empty which contradicts Claim \ref{claim: different shadows}.
\end{proof}
\subsection{Geometrically scaling decomposition}  \label{subsection: bound sizes}

In this section, we discuss a $(t-1)$-avoiding family $\ff \subset [m]^n$ decomposed as $\cR \sqcup \bigsqcup_{T\in \widetilde{\cT}}\ff_T,$ where $\widetilde \cT \subset \partial_{t}\ff$, $\ff_T \subset \ff[T]$ for all $T \in \widetilde\cT$ and $\cR$ is a relatively small remainder. We will show that there is a restriction $T \in \widetilde\cT$ such that almost all sets of $\ff$ are contained in $\ff_T \vee\{T\}.$ We begin with bounding the number of parts of the decomposition depending on their measure.

%For $i \in \mathbb{N}$ put $\delta_i = \delta _0^{i+1}.$ Define
%\begin{align*}
%	\widetilde{\cT}_i = \{T \in \widetilde{\cT} \mid \mu^{-T}(\ff^*_T) \in (\delta_{i}, \delta_{i - 1}]\}.
%\end{align*}

\begin{claim}\label{claim: estimate t_i}
    Let $m, n, t$ be positive integers such that $n \ge 50t^2$. Let $\ff = \bigsqcup_{T\in \widetilde{\cT}}\ff_T$ be a $(t-1)$-avoiding subfamily of $[m]^n$, $\widetilde\cT \in \partial_t\ff$ and $\ff_T \subset \ff[T]$ for all $T \subset \widetilde\cT$. Set $\delta_0 = e^{-n/t^2}$, for $i \in \mathbb{N}$ put $\delta_i = \delta _0^{i+1}.$  Define \begin{align*}
	\widetilde{\cT}_i = \{T \in \widetilde{\cT} \mid \mu^{-T}(\ff^*_T) \in (\delta_{i}, \delta_{i - 1}]\}.
\end{align*} Then for any $\widetilde\cT_i$ we have $|\widetilde{\cT}_i| \le 2\delta_i^{-(t-1)/(n-t)}$.
\end{claim}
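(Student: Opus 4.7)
The plan is to combine Kruskal--Katona for codes (Theorem~\ref{KK_direct}) with the disjointness of the $(t-1)$-shadows of distinct $\ff^*_T$ granted by Claim~\ref{claim: different shadows}. Informally: if $\widetilde{\cT}_i$ were too large, then each $\ff^*_T$ with $T \in \widetilde{\cT}_i$ would produce a sizable $(t-1)$-shadow, and these shadows would not all fit inside $\partial_{t-1}[m]^n$.

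First I would fix $T = (Z, x) \in \widetilde{\cT}_i$ and apply Theorem~\ref{KK_direct} to $\ff^*_T \subset [m]^{[n]\setminus Z}$, a subfamily of density greater than $\delta_i$ in the $(n-t)$-dimensional cube. Kruskal--Katona on $n-t$ coordinates yields
$$|\partial_{t-1}\ff^*_T| \ge \delta_i^{(t-1)/(n-t)} \binom{n-t}{t-1} m^{t-1}.$$
Next, Claim~\ref{claim: different shadows} implies that for distinct $T_1, T_2 \in \widetilde{\cT}_i \subset \partial_t\ff$ and any $F_1 \in \ff^*_{T_1}, F_2 \in \ff^*_{T_2}$ one has $|F_1 \cap F_2| \le t-2$, so no $(t-1)$-subset can lie in both $F_1$ and $F_2$. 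Hence the shadows $\partial_{t-1}\ff^*_{T_1}$ and $\partial_{t-1}\ff^*_{T_2}$ are disjoint subsets of $\partial_{t-1}[m]^n$. Summing the lower bound above over $T \in \widetilde{\cT}_i$ gives
$$|\widetilde{\cT}_i| \cdot \delta_i^{(t-1)/(n-t)} \binom{n-t}{t-1} m^{t-1} \le |\partial_{t-1}[m]^n| = \binom{n}{t-1} m^{t-1}.$$

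The remaining step is the routine estimate $\binom{n}{t-1}/\binom{n-t}{t-1} \le 2$ under $n \ge 50t^2$: writing the ratio as $\prod_{i=0}^{t-2}(n-i)/(n-t-i) \le (1 + t/(n-2t+2))^{t-1}$ and using $n \ge 50t^2$ to bound each factor by $1 + 1/(25t)$ gives a product at most $e^{1/25} < 2$. I do not expect any real obstacle in this proof --- everything follows from the two prepared tools; the only mildly delicate point is confirming the binomial ratio is at most $2$, which is exactly what dictates the constant in the claim.
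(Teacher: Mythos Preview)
Your proof is correct and essentially identical to the paper's: both apply Theorem~\ref{KK_direct} to each $\ff^*_T$ with $T\in\widetilde{\cT}_i$, invoke Claim~\ref{claim: different shadows} for disjointness of the $(t-1)$-shadows, sum, and finish with the estimate $\binom{n}{t-1}/\binom{n-t}{t-1}\le 2$ under $n\ge 50t^2$. The only cosmetic difference is that the paper phrases the argument by contradiction, while you write it as a direct inequality.
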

\begin{proof}
    Assume the contrary. From Theorem \ref{KK_direct} for all $T \in \widetilde{\cT}_i$ we have $$|\partial_{t-1}\ff^*_{T}| \ge {n - t\choose t-1}m^{t-1}\cdot\delta_i^{\frac{t-1}{n-t}}.$$ Claim \ref{claim: different shadows} guarantees that the $(t-1)$-shadows of $\ff^*_T$ are disjoint for different $T$. Thus the following series of inequality holds. $$\Bigg|\partial_{t-1}\bigcup_{T \in \widetilde{\cT}}\ff^*_{T}\Bigg| = \sum_{T \in \widetilde{T}}|\partial_{t-1}\ff^*_T| \ge $$$$\ge |\widetilde{\cT_i}|\cdot {n - t\choose t-1}m^{t-1}\cdot\delta_i^{\frac{t-1}{n-t}} \ge 2\cdot {n - t\choose t-1}m^{t-1} > {n \choose t-1}m^{t-1} = |\partial_{t-1}[m]^n|.$$ This leads us to contradiction. Note that $2{n - t\choose t-1} > {n \choose t-1}$ because $$\frac{{n \choose t-1}}{{n-t\choose t-1}} = \frac{n(n-1)\cdot\ldots\cdot(n-t+2)}{(n-t)\cdot\ldots\cdot(n - 2t + 2)} \le \left(1 + \frac{t}{n-2t+2}\right)^{t-1} < 2,$$ where the last inequality is due to $n \ge 50t^2 \Longrightarrow n - 2t + 2 \ge 2t^2$.
\end{proof}

Using Claim \ref{claim: estimate t_i}, we will show in two steps that there exists a set $T_0 \in \widetilde\cT$ such that $|\ff\setminus \ff_{T_0}^*|$ is relatively small. First, in Claim \ref{claim: sum of f*},  we prove that the total size of families $\ff_T^*$ except some $\ff^*_{T_0}$ is at most $2\sqrt{\delta_0}m^{n-t}.$ Then, in Claim \ref{claim: almost all in *}, we prove that $|\ff \setminus\bigcup_{T \in \widetilde\cT}\ff^*_T| \le m^{n-t-1/3}.$

\begin{claim}\label{claim: sum of f*}
    In terms of Claim \ref{claim: estimate t_i} set $T_0 \in \widetilde\cT$ be such that $\mu^{-T_0}(\ff_{T_0}^*)$ is the biggest among $\mu^{-T}(\ff_T^*), T\in \widetilde\cT$. Then $$\sum_{T \in \widetilde{\cT}}|\ff_{T}^*| \le |\ff^*_{T_0}| + 2 m^{n - t} \sqrt{\delta_0}.$$
\end{claim}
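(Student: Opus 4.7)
The plan is to isolate $T_0$ from the remaining restrictions and control the rest geometrically via the buckets $\widetilde\cT_i$ introduced in Claim~\ref{claim: estimate t_i}. The key input is the proposition proved earlier in Section~\ref{subsection: properties of decomposition}, which says that at most one $T \in \partial_t\ff$ can satisfy $\mu^{-T}(\ff_T^*) \ge \delta_0$. Since $T_0$ is chosen to maximize $\mu^{-T}(\ff_T^*)$, every other $T \in \widetilde\cT$ with $|\ff_T^*|>0$ must fall into some $\widetilde\cT_i$ with $i \ge 1$, which immediately restricts the range of summation.

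For such $T \in \widetilde\cT_i$, the definition of $\widetilde\cT_i$ directly gives $|\ff_T^*| \le \delta_{i-1} m^{n-t} = \delta_0^{\,i} m^{n-t}$, while Claim~\ref{claim: estimate t_i} yields $|\widetilde\cT_i| \le 2\delta_i^{-\alpha}$ with $\alpha := (t-1)/(n-t)$. Multiplying these two bounds and summing the resulting geometric series I would obtain
$$\sum_{T \in \widetilde\cT \setminus \{T_0\}} |\ff_T^*| \;\le\; 2\,m^{n-t} \sum_{i \ge 1} \delta_0^{\,i(1-\alpha)-\alpha} \;=\; 2\,m^{n-t}\cdot \frac{\delta_0^{\,1-2\alpha}}{1 - \delta_0^{\,1-\alpha}}.$$

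It then remains to check that the fraction above is at most $\sqrt{\delta_0}$, which is a routine convergence estimate enabled by $n \ge 50t^2$. That hypothesis gives $\alpha \le 1/50$ (so $1-2\alpha \ge 24/25$ and $1/2 - 2\alpha \ge 23/50$) and $\delta_0 = e^{-n/t^2} \le e^{-50}$, so $1-\delta_0^{\,1-\alpha} > 1/2$ and $\delta_0^{\,1-2\alpha} \le \sqrt{\delta_0}\cdot\delta_0^{23/50} \le \sqrt{\delta_0}\cdot e^{-23}$, leaving huge slack. The only conceptual step is recognizing that the preceding proposition is exactly what trims the sum to $i \ge 1$; after that the argument is pure bookkeeping, so I do not anticipate a real obstacle.
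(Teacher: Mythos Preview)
Your proposal is correct and follows the same approach as the paper: isolate $T_0$, bucket the remaining $T$'s into the $\widetilde\cT_i$ for $i\ge 1$, multiply the count bound of Claim~\ref{claim: estimate t_i} by the per-element bound $|\ff_T^*|\le \delta_{i-1}m^{n-t}$, and sum the resulting geometric series. The paper carries out exactly this computation, only with cruder numerics (it uses $n-t>6(t-1)$ to bound the exponent by $2i/3$, whereas you keep $\alpha=(t-1)/(n-t)\le 1/50$); your explicit appeal to the proposition to justify that all $T\ne T_0$ lie in $\bigcup_{i\ge 1}\widetilde\cT_i$ is precisely what the paper leaves implicit in writing $\sum_{T}|\ff_T^*|=|\ff_{T_0}^*|+\sum_{i\ge 1}\sum_{T\in\widetilde\cT_i}|\ff_T^*|$.
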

\begin{proof}
Fix any $T_0$ for which $\mu(\ff_{T_0})$ attains the maximum. According to Claim \ref{claim: estimate t_i}, we have
$$\sum_{T \in \widetilde{\cT}}|\ff_{T}^*| = |\ff^*_{T_0}| + \sum_{i = 1}^{\infty} \sum_{T \in \widetilde{\cT}_i} |\ff^*_{T_i}| \le |\ff^*_{T_0}| + 2 m^{n - t} \sum_{i = 1}^{\infty} \delta_{i}^{-(t-1)/(n-t)} \delta_{i - 1} = $$ $$= |\ff^*_{T_0}| + 2 m^{n - t} \sum_{i=1}^{\infty}\delta_0^{i - \frac{(i+1)(t-1)}{n-t}}.$$ For any $i\in \mathbb{N}$, since $n - t > 6(t-1)$, the following inequalities hold: $$i - \frac{(i+1)(t-1)}{n-t}\ge i - \frac{i+1}6 \ge \frac{2i}3$$ and we can proceed estimating $\sum_{T \in \widetilde{\cT}}|\ff_{T}^*|$ as $$\sum_{T \in \widetilde{\cT}}|\ff_{T}^*| \le |\ff^*_{T_0}| + 2 m^{n - t}\sum_{i=1}^{\infty}\delta_0^{2i/3} = |\ff^*_{T_0}| + 2 m^{n - t}\delta_0^{2/3}\cdot\frac1{1 - \delta_0^{2/3}} \le |\ff^*_{T_0}| + 2 m^{n - t} \sqrt{\delta_0}$$ The last inequality holds, because $\delta_0 = e^{-n/t^2} < e^{-3} < 0.1$.
\end{proof}

We will now bound $\sum_{T \in \widetilde{\cT}}|\ff_T(T) \setminus\ff^*_T|$.

\begin{claim}\label{claim: almost all in *}
    Let $m, n, t$ be positive integers such that $m \ge (2^{22}n)^{4t}, n \ge 50t^2$. Let $\ff = \bigsqcup_{T\in \widetilde{\cT}}\ff_T$ be a $(t-1)$-avoiding subfamily of $[m]^n$, $\widetilde\cT \in \partial_t\ff$, $\ff_T \subset \ff[T]$ for all $T \in \widetilde\cT$ and $|\widetilde\cT| \le m^{1/3}\ln m$. Then, we have
     $$\sum_{T \in \widetilde{\cT}}|\ff_T(T) \setminus\ff^*_T| \le m^{n-t-1/3}.$$
\end{claim}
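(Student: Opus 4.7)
The plan is to bound $|\ff_T(T) \setminus \ff_T^*|$ separately for each $T \in \widetilde\cT$ and then sum. By the definition of $\ff_T^*$, every $F \in \ff_T(T) \setminus \ff_T^*$ admits a subset $X \subset F$ with $|X| \le t - 1$ such that the covering number of $\ff_T(T \cup X)$ is at most $n$. Writing $\mathrm{cov}(\cdot)$ for the covering number and noting that $|\{F \in \ff_T(T) \mid X \subset F\}| = |\ff_T(T \cup X)|$, a union bound over all such witnesses $X$ gives
$$|\ff_T(T) \setminus \ff_T^*| \le \sum_{\substack{X \in \partial_{\le t-1}\cAcodes(T) \\ \mathrm{cov}(\ff_T(T \cup X)) \le n}} |\ff_T(T \cup X)|.$$

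Next, I will bound the summands via the covering-number assumption. If $\mathrm{cov}(\ff_T(T \cup X)) \le n$, fix a hitting set $C$ of cardinality at most $n$ in the residual ambient family; each of its elements is a coordinate-value pair contained in at most $m^{n - t - |X| - 1}$ codes on the $n - t - |X|$ remaining coordinates, so $|\ff_T(T \cup X)| \le n\, m^{n-t-|X|-1}$. Since the number of partial codes of size $k$ supported on $[n] \setminus Z_T$ (where $Z_T$ is the coordinate set of $T$) is $\binom{n-t}{k} m^k$, we obtain
$$|\ff_T(T) \setminus \ff_T^*| \le \sum_{k=0}^{t-1} \binom{n-t}{k} m^k \cdot n\, m^{n - t - k - 1} = n\, m^{n - t - 1} \sum_{k=0}^{t-1} \binom{n-t}{k} \le n^t\, m^{n - t - 1},$$
where the final inequality uses $\sum_{k=0}^{t-1}\binom{n-t}{k} \le n^{t-1}$, which is elementary for $n \ge 50 t^2$.

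Summing over $T \in \widetilde\cT$ and plugging in $|\widetilde\cT| \le m^{1/3} \ln m$ yields
$$\sum_{T \in \widetilde\cT} |\ff_T(T) \setminus \ff_T^*| \le m^{1/3} (\ln m)\, n^t\, m^{n - t - 1},$$
so the claim reduces to verifying $n^t \ln m \le m^{1/3}$. This is immediate from the hypothesis $m \ge (2^{22} n)^{4t}$, since then $m^{1/3} \ge 2^{88t/3}\, n^{4t/3}$, which dominates $n^t \ln m$ comfortably once $n \ge 50 t^2$. I do not expect a substantive obstacle beyond the covering-number estimate in the second step; the rest is routine counting of partial codes.
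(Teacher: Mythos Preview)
Your argument is correct and follows the same route as the paper: union-bound over the witnessing partial codes $X$ of size at most $t-1$, use the covering-number hypothesis to bound each $|\ff_T(T\cup X)|$ by roughly $n\,m^{n-t-|X|-1}$, sum over the $\binom{n-t}{k}m^k$ choices of $X$, and then absorb the factor $|\widetilde\cT|\le m^{1/3}\ln m$ using $m\ge(2^{22}n)^{4t}$. The only cosmetic differences are that the paper keeps the looser count $\binom{n}{p}$ and carries an extra constant, and it justifies the final inequality $n^t\ln m\le m^{1/3}$ by splitting $m^{1/3}=m^{1/4}\cdot m^{1/12}$ and bounding $m^{1/12}\gtrsim\ln m$; your one-line appeal to $m^{1/3}\ge 2^{88t/3}n^{4t/3}$ is a shade terse (since $\ln m$ still depends on $m$), but the monotonicity of $m^{1/3}/\ln m$ for $m\ge e^3$ immediately completes it.
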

\begin{proof}
Recall the definition of $\ff_T^*$. The sets from $\ff_T(T) \setminus \ff_T^*$ can be described as follows: there is a restriction $(Z, x), Z \in [n], x\in [m]^Z, |Z| \le t-1$ such that the covering number of $\ff_T(Z\to x)$ is less than $n+1$. It also means that the size of $\ff_T(T)(Z\to x)$ is at most $(n+1)\cdot m^{n-t-|Z|-1}$, and we can write $$\sum_{T \in \widetilde{\cT}}|\ff_T(T) \setminus\ff^*_T| \le |\widetilde\cT|\cdot\sum_{\substack{Z \in [n], |Z|\le t-1 \\ x\in [m]^Z}}(n+1)\cdot m^{n - t-|Z| - 1} \le 2n|\widetilde{\cT}|\cdot\sum_{Z \in [n], |Z|\le t-1} m^{n-t-1} \le $$ \begin{equation}\label{eq_all_T_but*}\le 2n \cdot m^{1/3}\ln m\cdot\sum_{p=0}^{t-1}{n\choose p}\cdot m^{n-t-1} \le 4n^t\ln m\cdot m^{n-t-2/3} \le m^{n-t-1/3},\end{equation} %\textcolor{blue}{GS: covering number is less than $n + 1$ now, $2n$ should be replaced by $n$ in the first inequality.} 
    where we used $$m \ge (2^{22}n)^{3t}\cdot (m^{1/12})^3  \ge(2^{22}n)^{3t}\cdot \left(\frac{\ln m}{12}\right)^3\ge4^3n^{3t} \cdot \ln^3 m = (4n^t\ln m)^3$$ in the last inequality.    
\end{proof}

We are now ready to prove that for some $T_0$ all but a constant factor sets from $\ff^*_{T_0}\vee\{T_0\}$ are contained in $\ff$, i.e. $\ff$ contains a large incomplete $t$-star.

\begin{claim}\label{claim: get first delta}
    Let $m, n, t$ be positive integers such that $m \ge (2^{22}n)^{4t}, n \ge 50t^2$. Suppose that a $(t-1)$-avoiding family $\ff \subset [m]^n, |\ff| \ge m^{n-t}$, can be decomposed into families $\cR \sqcup \bigsqcup_{T\in \widetilde{\cT}}\ff_T$ such that $\widetilde\cT \subset \partial_t\ff$, $\ff_T \subset \ff[T]$ for all $T \in \widetilde\cT$ and $|\cR| \le 2m^{n-t-1/3}, |\widetilde\cT| \le m^{1/3}\ln m$. Then, there are $T_0\in \widetilde\cT$ and $\delta < 1/e$ such that $$\big|\ff^*_{T_0}\vee\{T_0\}\big| = (1-\delta)m^{n-t}.$$
\end{claim}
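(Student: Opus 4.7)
The plan is to chain the three preceding claims essentially mechanically. Since every set in $\ff_T$ contains $T$, we have $|\ff_T| = |\ff_T(T)|$, so the decomposition $\ff = \cR \sqcup \bigsqcup_{T} \ff_T$ combined with $|\ff| \ge m^{n-t}$ and $|\cR| \le 2 m^{n-t-1/3}$ gives
\[
\sum_{T \in \widetilde\cT} |\ff_T(T)| \ge m^{n-t} - 2 m^{n-t-1/3}.
\]
Noting that the hypotheses of Claim~\ref{claim: almost all in *} are exactly those assumed here, subtracting $\sum_T |\ff_T(T) \setminus \ff_T^*| \le m^{n-t-1/3}$ yields
\[
\sum_{T \in \widetilde\cT} |\ff_T^*| \ge m^{n-t}\bigl(1 - 3 m^{-1/3}\bigr).
\]

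Next, I would choose $T_0 \in \widetilde\cT$ maximizing $\mu^{-T}(\ff_T^*)$, equivalently maximizing $|\ff_T^*|$, and apply Claim~\ref{claim: sum of f*} (which relies on the proposition that at most one restriction can satisfy $\mu^{-T}(\ff_T^*) \ge \delta_0$). This gives
\[
|\ff_{T_0}^*| \ge \sum_{T \in \widetilde\cT} |\ff_T^*| - 2 m^{n-t}\sqrt{\delta_0} \ge m^{n-t}\bigl(1 - 3 m^{-1/3} - 2\sqrt{\delta_0}\bigr).
\]
Writing $T_0 = (Z_{T_0}, x_{T_0})$ with $|Z_{T_0}| = t$, the sets of $\ff_{T_0}^*$ live on the ground set $[mn] \setminus \bigsqcup_{i \in Z_{T_0}} I_i$ and are disjoint from $T_0$, so $|\ff_{T_0}^* \vee \{T_0\}| = |\ff_{T_0}^*|$. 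Setting $\delta := 1 - |\ff_{T_0}^*|/m^{n-t}$ therefore gives $\delta \le 3 m^{-1/3} + 2\sqrt{\delta_0}$.

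Finally, I would check $\delta < 1/e$ from the hypotheses: $n \ge 50 t^2$ gives $\sqrt{\delta_0} = e^{-n/(2t^2)} \le e^{-25}$, while $m \ge (2^{22} n)^{4t} \ge 2^{88}$ makes $m^{-1/3}$ negligible, so $\delta$ is in fact far smaller than $1/e$. There is no real obstacle here: all the substantive work was done in Claims~\ref{claim: almost all in *} and~\ref{claim: sum of f*} and the preceding proposition isolating $T_0$, and this claim just packages their conclusions by summing two linear inequalities and a trivial numerical check.
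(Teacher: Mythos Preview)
The proposal is correct and follows essentially the same approach as the paper: both combine the bound $|\cR|\le 2m^{n-t-1/3}$, Claim~\ref{claim: almost all in *}, and Claim~\ref{claim: sum of f*} to obtain $|\ff_{T_0}^*|\ge(1-2\sqrt{\delta_0}-3m^{-1/3})m^{n-t}$ and then verify numerically that $2\sqrt{\delta_0}+3m^{-1/3}<1/e$. The only difference is cosmetic---you first lower-bound $\sum_T|\ff_T^*|$ and then isolate $T_0$, whereas the paper upper-bounds $|\ff|$ via Claim~\ref{claim: sum of f*} and then uses $|\ff|\ge m^{n-t}$---but these are algebraically equivalent rearrangements of the same two inequalities.
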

\begin{proof}
The family $\ff$ can be partitioned into \begin{equation}\label{equation: decomposition of F}\ff =\cR\sqcup \left(\bigsqcup_{T \in \widetilde{\cT}}\ff_T \setminus (\ff^*_T \vee \{T\})\right) \sqcup\left(\bigsqcup_{T\in\widetilde\cT}(\ff^*_T \vee \{T\})\right).\end{equation} According to Claim \ref{claim: almost all in *}, we have $$\Bigg|\cR\sqcup \left(\bigsqcup_{T \in \widetilde{\cT}}\ff_T \setminus (\ff^*_T \vee \{T\})\right)\Bigg| \le |\cR| + \Big|\bigcup_{T \in \widetilde{\cT}}\ff_T(T) \setminus \ff^*_T\Big| \le 3m^{n-t-1/3}.$$

Claim \ref{claim: sum of f*} implies $$|\ff| \le 3m^{n - t - 1/3} + |\ff_{T_0}^*| + 2m^{n-t}\sqrt{\delta_0} = |\ff_{T_0}^*| + (2\sqrt{\delta_0} + 3m^{-1/3})m^{n-t},$$ where $\delta_0 = e^{-n/t^2}$ and $T_0 \in \widetilde\cT$ is such that $\mu^{-T_0}(\ff_{T_0}^*)$ is the biggest among $\mu^{-T}(\ff_T^*), T\in \widetilde\cT$. Since $|\ff| \ge m^{n-t}$, we have $$|\ff^*_{T_0}| \ge (1 - 2 \sqrt{\delta_0} - 3m^{-1/3}) m^{n - t}.$$ 

Set $\delta > 0$ such that $\big|\ff^*_{T_0}\vee\{T_0\}\big| = (1-\delta)m^{n-t}$. According to $m \ge (2^{22}n)^{4t} \ge 2^{22}$, we have $3m^{-1/3} \le 1/2e$, which implies $$\delta \le2\sqrt{\delta_0} + 3m^{-1/3} \le e^{-n/2t^2 +1} + 1/2e \le 1/e^2 +1/2e \le 1/e. $$ The second to last inequality is due to $-n/2t^2+1\le -2$.
\end{proof}

The bound $\delta \le 1/e$ is clearly not sufficient for our purposes, but it can be easily upgraded using the Kruskal--Katona theorem, as the following claim shows.

\begin{claim}\label{claim: sum less that delta/2}
In terms and conditions of Claim \ref{claim: get first delta} let  $T_0\in \widetilde\cT$ and $\delta < 1/e$ be such that $$\big|\ff \setminus(\ff^*_{T_0}\vee\{T_0\})\big| = (1-\delta)m^{n-t}.$$ Then the sum of sizes of $\ff_T^*, T\in \widetilde{T}\setminus\{T_0\}$, can be bounded as $$\sum_{T\in \widetilde{T}\setminus\{T_0\}}|\ff_T^*| \le \delta/2\cdot m^{n-t}$$
\end{claim}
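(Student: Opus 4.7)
The plan is to combine Kruskal--Katona (Theorem \ref{KK_direct}) applied to $\ff^*_{T_0}$ with the disjointness of shadows from Claim \ref{claim: different shadows}, and then transfer the resulting shadow bounds for the other $\ff^*_T$ back to size bounds via Corollary \ref{KK_contra}. Throughout write $r := (t-1)/(n-t)$ and $C := \binom{n}{t-1}/\binom{n-t}{t-1}$; the calculation already carried out in the proof of Claim \ref{claim: estimate t_i} gives $C - 1 \le 1/47$ under $n \ge 50t^2$.

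First, applying Theorem \ref{KK_direct} to $\ff^*_{T_0}$, viewed as a subfamily of $[m]^{[n]\setminus Z_0}$ of density $1 - \delta$ (where $T_0 = (Z_0, x_0)$), yields
\[
|\partial_{t-1}\ff^*_{T_0}| \ge (1-\delta)^{r}\binom{n-t}{t-1}m^{t-1} \ge (1 - r\delta)\binom{n-t}{t-1}m^{t-1}.
\]
By Claim \ref{claim: different shadows} the shadows $\partial_{t-1}\ff^*_T$ are pairwise disjoint across $T \in \widetilde\cT$ and all lie inside $\partial_{t-1}[m]^n$, which has $C\binom{n-t}{t-1}m^{t-1}$ elements. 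Setting $\delta_* := (C-1) + r\delta$, this simultaneously produces the aggregate bound $\sum_{T\ne T_0}|\partial_{t-1}\ff^*_T| \le \delta_*\binom{n-t}{t-1}m^{t-1}$ and the individual bound $|\partial_{t-1}\ff^*_T| \le \delta_*\binom{n-t}{t-1}m^{t-1}$ for every $T \ne T_0$.

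Next, for each $T \ne T_0$ set $\delta_T := |\partial_{t-1}\ff^*_T|/(\binom{n-t}{t-1}m^{t-1}) \le \delta_*$ and apply Corollary \ref{KK_contra} to $\ff^*_T \subset [m]^{n-t}$ with $l = t-1$ to obtain $|\ff^*_T| \le \delta_T^{1/r}m^{n-t}$. Since $1/r \ge 1$ and $\delta_T \le \delta_* \le 1$, one has $\delta_T^{1/r} = \delta_T \cdot \delta_T^{1/r - 1} \le \delta_T \cdot \delta_*^{1/r - 1}$; summing against $\sum_{T\ne T_0}\delta_T \le \delta_*$ then gives
\[
\sum_{T\ne T_0}|\ff^*_T| \le \delta_*^{1/r - 1}\sum_{T\ne T_0}\delta_T \cdot m^{n-t} \le \delta_*^{1/r}m^{n-t}.
\]

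The hard part will be verifying $\delta_*^{1/r} \le \delta/2$ across the full allowed range of $\delta$. Since $1/r = (n-t)/(t-1) \ge 49t$ under $n \ge 50t^2$, the quantity $\delta_*^{1/r}$ is a very high power of a number at most $1/47 + \delta$, so is vastly smaller than $\delta$ throughout the bulk of the range. The delicate regime is when $\delta$ is exceptionally close to $0$: there the direct comparison is insufficient, and I expect to supplement the argument by observing that the existence of any $F \in \ff^*_T$ with $T \ne T_0$ forces a cross-agreement ``ball'' of codes in $[m]^{[n]\setminus Z_0}$ of size comparable to $\binom{n}{t-1}m^{n-2t+1}$ to be disjoint from $\ff^*_{T_0}$, so $\delta$ cannot be smaller than a threshold of order $\binom{n}{t-1}/m^{t-1}$ whenever the sum is positive; combined with $m \ge (2^{22}n)^{4t}$ this threshold is comfortably above $2\delta_*^{1/r}$ and closes the remaining gap.
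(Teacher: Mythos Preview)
Your approach is close to the paper's in spirit, but the proposed fix for small $\delta$ contains a genuine gap that cannot be repaired along the lines you sketch.

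The issue is the direction of the inequality on $m$. Your threshold argument gives (at best) $\delta \gtrsim \binom{n}{t-1}/m^{t-1}$ whenever some $\ff^*_T$ with $T\ne T_0$ is nonempty. You then want this threshold to dominate $2\delta_*^{1/r}$, and you invoke $m \ge (2^{22}n)^{4t}$ to justify this. But a \emph{lower} bound on $m$ gives an \emph{upper} bound on the threshold, not a lower bound; the claim has no upper bound on $m$ in this section. Concretely, take $t=2$, $n=200$: then $1/r = 198$ and $\delta_* \ge C-1 = 2/198$, so $\delta_*^{1/r} \ge (1/99)^{198}$ is a fixed positive number independent of $m$. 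Meanwhile your threshold is $\approx 200/m$, which for $m$ sufficiently large (and $m$ can be arbitrarily large here) drops below $\delta_*^{1/r}$. So the inequality $\delta_*^{1/r}\le \delta/2$ fails in this regime and your argument does not close.

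The paper avoids this additive loss $(C-1)$ altogether by first restricting each $\ff^*_T$ (for $T\ne T_0$) to a family $\cG_T \subset [m]^{[n]\setminus(Z_0\cup Z_T)}$ via averaging on the coordinates $Z_0\setminus Z_T$, so that \emph{all} shadows $\partial_{t-1}\cG_T$ and $\partial_{t-1}\ff^*_{T_0}$ live inside the common space $\partial_{t-1}[m]^{[n]\setminus Z_0}$. Disjointness then gives $\sum_T|\partial_{t-1}\cG_T| \le \bigl(1-(1-\delta)^r\bigr)\binom{n-t}{t-1}m^{t-1}\le r\delta\binom{n-t}{t-1}m^{t-1}$, with no additive constant. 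Feeding this into Corollary~\ref{KK_contra} and the same convexity argument yields (after absorbing a harmless factor $e^{2t}$ coming from the slightly smaller ambient dimension of $\cG_T$) the bound $\sum_{T\ne T_0}|\ff^*_T|\le e^{2t}\delta^{(n-2t)/(t-1)}m^{n-t}$, which is at most $(\delta/2)m^{n-t}$ for all $\delta<1/e$ since $(n-2t)/(t-1)-2t\ge 2$. This works uniformly in $m$ and requires no small-$\delta$ case distinction.
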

\begin{proof}
Let $(Z_0, x_0)$ be $T_0$ written in the restriction form. By averaging (see Claim~\ref{claim: averaging argument}), for each $T = (Z_T, x_T) \in \widetilde\cT\setminus \{T_0\}$, find a restriction $y_T \in [m]^{Z_0\setminus Z_T}$ such that for $\cG_T = \ff^*_T(Z_0\setminus Z_T\to y_T)$ we have $\mu_{[n]\setminus Z_0\cup Z_T}(\cG_T) \ge \mu^{-T}(\ff^*_T).$ Theorem \ref{KK_direct} implies $$|\partial_{t-1}\ff_{T_0}^*| \ge  (1 - \delta)^{\frac{t-1}{n-t}}|\partial_{t-1}[m]^{[n]\setminus Z_0}|$$

Families $\partial_{t-1}\cG_T \subset \partial_{t-1}[m]^{[n]\setminus T_0\cup T}$ are disjoint for different $T \subset \widetilde\cT\setminus\{T_0\}$ and also disjoint from $\partial_{t-1}\ff^*_{T_0} \in \partial_{t-1}[m]^{[n]\setminus T_0}$ by Claim \ref{claim: different shadows}. Therefore the following inequality holds: $$\sum_{T \in \widetilde\cT \setminus \{T_0\}}|\partial_{t-1}\cG_T| \le \left(1 - (1-\delta)^{\frac{t-1}{n-t}}\right)\cdot|\partial_{t-1}[m]^{[n]\setminus Z_0}|\le$$  \begin{equation}\label{eq_sum_G_measures}
    \le \frac{t-1}{n-t}\cdot\delta|\partial_{t-1}[m]^{[n]\setminus Z_0}| < \delta|\partial_{t-1}[m]^{[n]\setminus Z_0}|.\end{equation}

We now use a contraposition of Theorem \ref{KK_direct}, i.e. Corollary \ref{KK_contra}, to bound the measure of a family via the size of its shadow. For any $T \in \widetilde\cT\setminus\{T_0\}$, we have 
\begin{align*}
	\mu^{- (Z \cup Z_0)}(\cG_T) \le \left ( \frac{|\partial_{t - 1} \cG_T|}{|\partial_{t - 1} [m]^{[n] \setminus (Z_T\cup Z_0)}|} \right )^{(n - t-|Z_0\setminus Z_T|)/(t-1)} \le \\ \le \left ( \frac{|\partial_{t - 1}\cG_T|}{|\partial_{t - 1} [m]^{[n] \setminus Z_0}| \cdot {n - t - |Z_0\setminus Z_T| \choose t-1} / {n - t\choose t-1}}\right )^{(n - 2t)/(t-1)}.
\end{align*}

Previously, we chose the restrictions $y_T$ in such a way that the measure of the remaining part was at least the measure of the original family. It allows us to bound the sum of measures of $\ff_T^*$ as follows: 
\begin{align}
    & \sum_{T \in \widetilde\cT\setminus\{T_0\}}\mu_{[n]\setminus Z_T}(\ff^*_T) \le \sum_{T \in \widetilde\cT\setminus\{T_0\}}\mu_{[n]\setminus Z_T\cup Z_0}(\cG_T)  \nonumber \\
    & \qquad \le \max_{T \in \widetilde\cT\setminus\{T_0\}} \left(\frac{{n-t\choose t-1}}{{n-t-|Z_0\setminus Z_T| \choose t-1}}\right)^{\frac{n-2t}{t-1}}\cdot \sum_{T \in \widetilde\cT\setminus\{T_0\}}\left ( \frac{|\partial_{t - 1}\cG_i|}{|\partial_{t - 1} [m]^{[n] \setminus Z_0}| }\right )^{\frac{n-2t}{t-1}}. \label{eq: shadow bound}
\end{align}
%\textcolor{blue}{В самой правой части неравенства в первом множителе $T$ не определено. Надо прежде чем выносить из суммы оценить чем-то, не содержащим $T$.}

We analyze these two factors separately. We have
\begin{align}
    & \max_{T \in \widetilde\cT\setminus\{T_0\}}\left(\frac{{n-t\choose t-1}}{{n-t-|Z_0\setminus Z_T| \choose t-1}}\right)^{\frac{n-2t}{t-1}} \nonumber \\
    & \qquad \le \max_{T \in \widetilde\cT\setminus\{T_0\}}\left(\frac{(n-t)(n-t+1)\cdot\ldots\cdot(n-2t+2)}{(n-t-|Z_0\setminus Z_T|)\cdot\ldots\cdot(n - 2t -|Z_0\setminus Z_T| + 2)}\right)^{\frac{n-2t}{t-1}} \nonumber \\
    & \qquad \le \max_{T \in \widetilde\cT\setminus\{T_0\}}\left(1 + \frac{|Z_0\setminus Z_T|}{n - 2t - |Z_0\setminus Z_T| + 2}\right)^{(t-1)\frac{n-2t}{t-1}} \le \left( 1 + \frac{t}{n - 3t}\right)^{n-2t} \nonumber \\
    & \qquad \le \left( 1 + \frac{2t}{n}\right)^{n} \le e^{2t}. \label{eq_factor1}
\end{align}

%\textcolor{blue}{GS: почему множитель $2$ в последнем неравенстве?}

To analyze the second factor, note that the function $x \mapsto x^{\frac{n-2t}{t-1}}$ is convex, thus, $(x_1, \ldots, x_k) \to \sum_{i=1}^k{x_i^{\frac{n-2t}{t-1}}}$ is also convex. Put $$u_T = |\partial_{t-1}\cG_T| / |\partial_{t-1}[m]^{[n]\setminus Z_0}|.$$ Inequality \eqref{eq_sum_G_measures} implies $\sum_{T \in \widetilde\cT \setminus \{T_0\}} u_T  < \delta$, thus, we can bound the second factor of~\eqref{eq: shadow bound} by a convex program
\begin{align*}
    & \max_{u_T, T \in \widetilde{\cT} \setminus \{T_0\}} \sum_{T \in \widetilde{T} \setminus \{T_0\}} u_T^{\frac{n - 2t}{t - 1}} \\
    & \qquad \text{subject to} \quad \begin{cases}
        u_T \ge 0, \\
        \sum_{T \in \widetilde{T} \setminus \{T_0\}} u_T \le \delta.
    \end{cases}
\end{align*}
A convex function attains its maximum over a simplex at one of its vertices, which implies \begin{equation}\label{eq_factor2}
    \sum_{T \in \widetilde\cT\setminus\{T_0\}}\left ( \frac{|\partial_{t - 1}\cG_i|}{|\partial_{t - 1} [m]^{[n] \setminus Z_0}| }\right )^{\frac{n-2t}{t-1}} \le  \left ( \frac{\delta |\partial_{t - 1} [m]^{[n] \setminus Z_0}|}{|\partial_{t - 1} [m]^{[n] \setminus Z_0}| }\right )^{\frac{n-2t}{t-1}}\le \delta^{\frac{n-2t}{t-1}}.\end{equation}

Combining \eqref{eq: shadow bound}, \eqref{eq_factor1}, \eqref{eq_factor2} and the fact that $\delta < 1/e$ we obtain %\textcolor{blue}{Откуда красная двойка?} 
$$\sum_{T\in \widetilde{\cT}\setminus{T_0}}|\ff_T^*| \le m^{n-t} \cdot \sum_{T \in \widetilde\cT\setminus\{T_0\}}\mu_{[n]\setminus Z_T}(\ff^*_T) \le m^{n-t}\cdot e^{2t}\cdot\delta^{\frac{n-2t}{t-1}}\le$$ $$\le m^{n-t}\cdot\delta^{\frac{n-2t}{t-1}-2t} \le m^{n-t}\delta^2 \le \delta/2 \cdot m^{n-t}.$$ The second to last inequality holds because $\frac{n-2t}{t-1} - 2t \ge 2$.    
\end{proof}
Now we can get a better bound on $\delta$ using the previous claim.

\begin{claim}\label{claim: T0 is almost everything}
    In terms and conditions of Claim \ref{claim: get first delta},  there are $T_0\in \widetilde\cT$ and $\delta < 6m^{-1/3}$ such that $$\big|\ff^*_{T_0}\vee\{T_0\}\big| = (1-\delta)m^{n-t}.$$
\end{claim}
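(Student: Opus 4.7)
The plan is to bootstrap the crude bound $\delta < 1/e$ from Claim~\ref{claim: get first delta} into the much sharper bound $\delta < 6m^{-1/3}$ by feeding the crude bound into Claim~\ref{claim: sum less that delta/2} and then re-accounting the size of $\ff$. This is a short calculation, and the main work has already been done in the preceding two claims.

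First, I would write the same partition of $\ff$ as in Claim~\ref{claim: get first delta},
\begin{align*}
    \ff = \cR \,\sqcup\, \Bigl(\bigsqcup_{T \in \widetilde{\cT}} \ff_T \setminus (\ff_T^* \vee \{T\})\Bigr) \,\sqcup\, \Bigl(\bigsqcup_{T \in \widetilde{\cT}} (\ff_T^* \vee \{T\})\Bigr),
\end{align*}
and bound the sizes of the three groups. By hypothesis $|\cR| \le 2m^{n-t-1/3}$, and Claim~\ref{claim: almost all in *} gives $\sum_{T \in \widetilde{\cT}} |\ff_T(T) \setminus \ff_T^*| \le m^{n-t-1/3}$, so the first two groups together contribute at most $3m^{n-t-1/3}$.

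For the last group, pick the $T_0$ produced by Claim~\ref{claim: get first delta}, so that $|\ff^*_{T_0} \vee \{T_0\}| = (1-\delta)m^{n-t}$ for some $\delta < 1/e$. Since this $\delta$ satisfies the hypothesis of Claim~\ref{claim: sum less that delta/2}, we get
\begin{align*}
    \sum_{T \in \widetilde{\cT} \setminus \{T_0\}} |\ff_T^* \vee \{T\}| = \sum_{T \in \widetilde{\cT} \setminus \{T_0\}} |\ff_T^*| \le \frac{\delta}{2}\, m^{n-t}.
\end{align*}

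Combining everything with the assumption $|\ff| \ge m^{n-t}$ yields
\begin{align*}
    m^{n-t} \le |\ff| \le 3m^{n-t-1/3} + (1-\delta)m^{n-t} + \frac{\delta}{2}\, m^{n-t},
\end{align*}
which rearranges to $\frac{\delta}{2}\, m^{n-t} \le 3m^{n-t-1/3}$, i.e.\ $\delta \le 6m^{-1/3}$, as required. There is no real obstacle here; the only thing to double-check is that the $T_0$ and $\delta$ from Claim~\ref{claim: get first delta} are exactly the ones that feed Claim~\ref{claim: sum less that delta/2}, so that the bound on $\sum_{T \ne T_0}|\ff_T^*|$ is proportional to the same $\delta$ that appears on the left-hand side of the final inequality.
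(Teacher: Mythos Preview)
Your proof is correct and essentially identical to the paper's own argument: both invoke Claim~\ref{claim: get first delta} to get $T_0$ and $\delta < 1/e$, feed this into Claim~\ref{claim: sum less that delta/2} to bound $\sum_{T\neq T_0}|\ff_T^*|$ by $\tfrac{\delta}{2}m^{n-t}$, and then use the decomposition~\eqref{equation: decomposition of F} together with $|\ff|\ge m^{n-t}$ and the $3m^{n-t-1/3}$ bound on the first two pieces to force $\tfrac{\delta}{2}m^{n-t}\le 3m^{n-t-1/3}$.
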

\begin{proof}
According to Claim \ref{claim: get first delta} and Claim \ref{claim: sum less that delta/2},
there are $T_0 \in \widetilde\cT$ and $\delta \le 1/e$ such that $$\big|\ff^*_{T_0}\vee\{T_0\}\big| = (1-\delta)m^{n-t}$$ and $$\sum_{T\in \widetilde{\cT}\setminus\{T_0\}}|\ff_T^*| \le \delta/2\cdot m^{n-t}.$$ 
%We will prove that in fact we can take $\delta = \delta'$ but we should estimate it more carefully. 
The decomposition \eqref{equation: decomposition of F} together with $|\ff| \ge m^{n-t}$ implies $$ \delta/2\cdot m^{n-t}\le \Bigg|\cR\sqcup \left(\bigsqcup_{T \in \widetilde{\cT} }\ff_T \setminus (\ff^*_T \vee \{T\})\right)\Bigg|\overset{\text{Claim } \ref{claim: almost all in *}}\le 3m^{n-t-1/3}.$$ Therefore $\delta < 6m^{-1/3}$.
\end{proof}

\subsection{Proof of Theorem \ref{m_geq_nt}}  \label{subsection: proof of m_geq_nt}

\begin{proof}[Proof of Theorem \ref{m_geq_nt}]
Let $\mathcal{F} \subset [m]^n$ be the maximal $(t-1)$-avoiding family, $|\mathcal{F}| \ge m^{n-t}$. We apply Lemma \ref{lemma: spread approximation} to $\mathcal{F}$ with $\tau = m^{1/12t}, q = 24t^2$ and obtain a decomposition $$\mathcal{F} = \mathcal{R} \cup \bigcup_{i\in [l]} \mathcal{F}_i,$$ such that for all $i \in [l]$ there are some $Z_i \subset [n], |Z_i| \le q$ and $x_i \in [m]^{Z_i}$ for which we have $\mathcal{F}_i \subset \mathcal{F}[Z_i \rightarrow x_i]$. Besides, $\mu(\mathcal{R}) \leq \tau^{-q}$, and, for all $i$, we have $\mu_{Z_i \rightarrow x_i}(\mathcal{F}_i(Z_i \rightarrow x_i)) \geq \tau^{-q}$ and $\mathcal{F}_i(Z_i \rightarrow x_i)$ is $\tau$-homogenous with respect to $\mu_{Z_i \rightarrow x_i}$

Put $\ff' = \bigcup_{i \in [l]}\ff_i$, $\mathcal{S} = \{(Z_1,x_1), \ldots, (Z_l, x_l)\}$. Now we apply Lemma \ref{get_sst} to $\ff'$ playing the role of $\ff$ and $\mathcal{S}$ being its $\tau$-homogenous core. We need to verify that all quantitative conditions hold. First, $t \le n+1$ and $n \ge q + t -1 = 24t^2 + t -1$ hold from the statement of Theorem \ref{m_geq_nt}. Second, $q = 24t^2 \ge t$ because $t \ge 1$. Finally, for $\alpha = (5n)^{-1} \in [0, (4n)^{-1})$ we have
$$m \ge m^{1/12} \cdot (2^{22}n)^t \ge \max\{2^{15}\lceil\log_2n\rceil, 48t^2\} \cdot 2 \cdot (5n)^{t-1} \cdot m^{1/12} = $$$$ = \max\{2^{15}\lceil\log_2n\rceil, 2q\}\cdot 2\alpha^{1-t}\tau^t.$$ The second inequality is due to $2^{22}n \ge 2\cdot 2^{15}\lceil\log_2n\rceil$ and $2^{22}n \ge 2^{22}\cdot50t^2\ge 2\cdot 48t^2$.

Thus, we obtain families $\mathcal{U}_1, \ldots, \mathcal{U}_l$ satisfying \textit{(1 --- 3)} of Lemma \ref{get_sst}. It is easy to see that $\mathcal{U} = \bigcup_{i \in [l]}\mathcal{U}_i$ is an $(\mathcal{S}, 2, t)$-system. We now apply Corollary \ref{corollary: get T} to $\mathcal{U}$ and obtain a set of restrictions $\widetilde{\cT}$, satisfying \eqref{eq_T_hat} and \eqref{eq_U_without_T}. We can decompose $\ff$ as $$\ff = \cR \cup \ff[\widetilde{\cT}] \cup \bigcup_{S \in \cS \setminus \cS[\widetilde{\cT}]} \ff_S.$$ Then we have $$|\ff\setminus \ff[\widetilde{\cT}]| \le |\cR| + \sum_{S \in \cS \setminus \cS[\widetilde{\cT}]}|\ff_S| \le m^n\cdot \tau^{-q} + (1 - 2\alpha n)^{-1}\sum_{S \in \cS \setminus \cS[\widetilde{\cT}]}|\cU_S| \le $$ \begin{equation}\label{eq_F-FT}\overset{\eqref{eq_U_without_T}}\le m^n \cdot m^{-2t} + \frac53\cdot m^{n-t-1/3}\le 2m^{n-t-1/3}.\end{equation}

We partition $\ff[\widetilde{\cT}]$ into families $\ff_T \subset \ff[Z_T \to x_T]$ for all $(Z_T, x_T) = T \in \widetilde{\cT}$, i. e. 
\begin{align*}
    \ff[\widetilde{\cT}] = \bigsqcup_{T \in \widetilde{\cT}} \ff_T.
\end{align*}

Note that $n, m, t, \ff, \cR, \widetilde{\cT}$ meet conditions of Claim \ref{claim: T0 is almost everything}. Thus, due to this claim, there are $(Z_0, x_0) = T_0 \in \widetilde\cT$ and $\delta < 6m^{-1/3}$ such that  $$\big|\ff^*_{T_0}\vee\{T_0\}\big| = (1-\delta)m^{n-t}.$$

It remains to show that in fact $\delta=0$ and $\ff = [m]^n[Z_0\to x_0] = [m]^n[T_0]$. Put $\cR = \ff\setminus\ff[T_0]$. To bound $|\cR|$ we need the following claim. %Put $$\cR_1 = \ff[T_0]\setminus(\ff^*_{T_0} \vee \{T_0\}), \qquad \cR_2 = \ff\setminus\ff[T_0].$$ First we bound $|\cR_1|$. $$|\cR_1| \le \Bigg|\bigcup_{T \in \widetilde{\cT}}\ff_T \setminus \ff^*_T\Bigg| \overset{\eqref{eq_all_T_but*}}{\le} m^{n-t-1/3}.$$ To bound $|\cR_2|$ we need the following claim.

\begin{claim}\label{shadows do not intersect}
The families $\partial_{t-1}\ff_{T_0}^*$ and $\partial_{t-1}\left(\cR|_{[n]\setminus Z_0}\right)$ do not intersect.
\end{claim}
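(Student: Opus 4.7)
The plan is to argue by contradiction: assume some restriction $(Y,y)$ with $Y\subset[n]\setminus Z_0$, $|Y|=t-1$ lies in both $\partial_{t-1}\ff^*_{T_0}$ and $\partial_{t-1}(\cR|_{[n]\setminus Z_0})$. Then pick witnesses $F_1\in\ff^*_{T_0}$ with $F_1|_Y=y$ and $G\in\cR$ with $G|_Y=y$. The code $\widetilde{F_1}=(x_0,F_1)$ lies in $\ff[T_0]\subset\ff$, while $G\in\ff\setminus\ff[T_0]$, so $G|_{Z_0}\neq x_0$. Let $p=\agr_{Z_0}(x_0,G|_{Z_0})$; then $0\leq p\leq t-1$. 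The goal is to construct some code in $\ff$ that agrees with $G$ on exactly $t-1$ coordinates, contradicting the $(t-1)$-avoiding hypothesis.

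The construction uses the defining property of $\ff^*_{T_0}$. Pick any $Y'\subset Y$ with $|Y'|=t-1-p$. Viewing $(Y',y|_{Y'})$ as a subset of $F_1$ of size at most $t-1$, the definition of $\ff^*_{T_0}$ guarantees that the covering number of $\ff_{T_0}(T_0\cup Y'\to y|_{Y'})$ is at least $n+1$. Now consider the ``forbidden'' coordinate-value pairs $\{(j,G_j):j\in[n]\setminus(Z_0\cup Y')\}$; there are $n-t-(t-1-p)=n-2t+1+p\leq n-t\leq n$ of them, hence they cannot cover $\ff_{T_0}(T_0\cup Y'\to y|_{Y'})$. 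So there exists $F'\in\ff_{T_0}(T_0\cup Y'\to y|_{Y'})$ that disagrees with $G$ on every coordinate in $[n]\setminus(Z_0\cup Y')$.

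Finally, tally agreements between the code $F''=(x_0,y|_{Y'},F')\in\ff_{T_0}\subset\ff$ and $G$: on $Z_0$ they agree on exactly $p$ coordinates; on $Y'$ they agree fully (both equal $y|_{Y'}$, since $G|_Y=y$), contributing $t-1-p$; and on $[n]\setminus(Z_0\cup Y')$ they disagree everywhere by construction of $F'$. Thus $\agr(F'',G)=p+(t-1-p)+0=t-1$, contradicting the $(t-1)$-avoiding property of $\ff$.

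The main thing to verify carefully is the bookkeeping around the three ``zones'' $Z_0$, $Y'$, and $[n]\setminus(Z_0\cup Y')$, together with the check that the forbidden set has size at most $n$ so that the covering-number bound from the $*$-property can be invoked. The inequality $n-2t+1+p\leq n$ holds for all admissible $p\leq t-1$, so the argument works uniformly; the case $p=0$ essentially reproduces the logic of Claim~\ref{claim: different shadows}, while positive $p$ is handled by shrinking $Y$ to $Y'$ and picking up the missing $p$ agreements from $Z_0$ itself.
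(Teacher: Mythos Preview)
Your proof is correct and follows essentially the same approach as the paper's own argument: assume a common $(t-1)$-shadow, pick a subset $Y'$ of size $t-1-p$ where $p=\agr_{Z_0}(x_0,G|_{Z_0})$, and use the covering-number property defining $\ff^*_{T_0}$ to find a code in $\ff_{T_0}$ disagreeing with $G$ outside $Z_0\cup Y'$, yielding agreement exactly $t-1$. Your bookkeeping on the zones and the size of the forbidden set is in fact slightly more explicit than the paper's version.
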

\begin{proof}
Assume they do, $T =(Z, x)$ is a common element, $|Z|=t-1$. Fix arbitrary $F \in \ff_{T_0}^*[Z \to x], G \in \cR[Z\to x].$ Put $p = \agr(x_0, G|_{Z_0})$. Since no set from $\cR$ contains $T_0$, we have $p < t-1$. 

Fix $W \subset Z, |W|=t-1-p.$ Let $Y$ be $F|_W$, note that $|Y|\le t-1$. % (\textcolor{blue}{$Y$ -- подмножество $[n]$, $F$ -- элемент кода, но понятно, что имеется в виду}). 
By the definition of $\ff_{T_0}^*$ the covering number of $\ff^*_{T_0}(T_0 \cup Y)$ is at least $n+1$ thus there is some $F_1 \in \ff_{T_0}(T_0)[Y]$ that avoids $G$ outside of $Y$. Then, we have $\agr(F_1\sqcup T_0, G) = t-1$, a contradiction.
\end{proof}

Using Theorem \ref{KK_direct}, we get that $$|\partial_{t-1}\ff_{T_0}^*| \ge (1-\delta)^{\frac{t-1}{n-t}}|\partial_{t-1}[m]^{n\setminus Z_0}|.$$ According to the previous claim, using disjointness, $$|\partial_{t-1}\left(\cR_{[n]\setminus Z_0}\right)| \le \left(1 - (1-\delta)^{\frac{t-1}{n-t}}\right)|\partial_{t-1}[m]^{n\setminus Z_0}| \le  \frac{t-1}{n-t}\delta|\partial_{t-1}[m]^{n\setminus Z_0}|.$$ Finally, using Corollary \ref{KK_contra} once again we get $$|\cR| \le m^t\cdot \left|\partial_{n-t}\left(\cR_{[n]\setminus Z_0}\right)\right| \le m^t \cdot \left(\frac{\delta(t-1)}{n-t}\right)^{\frac{n-t}{t-1}}m^{n-t} \le $$ $$\le m^n \cdot m^{-1/3\cdot \frac{n-t}{t-1}} < m^{n-3t}.$$ The last inequality is true because $\frac13\cdot\frac{n-t}{t-1} < 3t$. Finally we apply Lemma \ref{lemma: almost all to all} to $\ff$, $Z_0$ playing the role of $Z$ and $x_0$ playing the role of $x$, which implies $\ff = [m]^n[T_0]$. %\textcolor{blue}{А тут что-нибудь мешает написать $-3t$ вместо $-2t$, воспользоваться леммой \ref{lemma: almost all to all}, раз мы её всё равно используем в другом месте, и закончить доказательство в одно предложение? Правда, не используя её, мы показываем, что на самом деле в этой части доказательства не нужны никакие нетривиальные утверждения. Не знаю, насколько это ценно.}

%Thus, we have a large part $\ff[T_0]$ and a small remainder $\cR$. Assume this remainder is not empty and fix arbitrary $G \in \cR$. Put $p = \agr(x_0, G|_{Z_0}) \le t-1$. Sets $F$ such that $F|_{Z_0} = x_0$ and $\agr(F|_{[n]\setminus Z_0}, G|_{[n]\setminus Z_0}) = t-1-p$ can not be in $\ff[T_0]$. The number of such sets equals exactly to ${n-t \choose t - 1 - p}(m-1)^{n-t-(t-1-p)}$. It at least $$(m-1)^{n-2t+1+p}\ge (m-1)^{n-2t+1}\ge m^{n-2t}.$$ The last inequality holds because $$\left(1 + \frac1{m-1}\right)^{n-2t} \le \left(1 + \frac1{m-1}\right)^{m-1} \le e \le m-1$$

%Therefore, if $\cR$ is not empty, then $|\ff| = |\ff[T_0]| + |\cR| < (m^{n-t} - m^{n-2t}) + m^{n-2t} = m^{n-t},$ a contradiction. Hence, $\cR = \emptyset$ and $\ff = \ff[T_0]$ which completes the proof of the main theorem of this section.
\end{proof}

\section{Proof of the main theorem}
\label{section: proof of the main theorem}

%\begin{theorem}
%    \label{theorem: main}
%    There exist absolute constants $C_0, C_n, C_m$ such that for all $t \geq 1, n \geq \max(C_0, t^{C_n}), m \geq \max(C_0, t^{C_m})$ the following holds. If $\mathcal{F} \subset [m]^n$ is such family, that $\agr(x, y) \neq t - 1$ for any $x, y \in \mathcal{F}$ and $|\mathcal{F}| \geq m^{n-t}$, then there exist $Z \subset n$ and $x \in [m]^{Z}$ such that $|Z|=t$ and $\mathcal{F} = [m]^{n}[Z \rightarrow x]$.
%\end{theorem}
%\textcolor{magenta}{Liza: I changed constants in $m>n^{Ct}$, it might slightly affect the proof here}.

%Our proof implies that we may set $C_0 = 2^{128}, C_n = 3.1, C_m = 300$. However, we did not try to optimize these constants, and they definitely may be improved.

\begin{proof}[Proof of Theorem \ref{theorem: main}]
    Let $\widehat{n}_0(t), \widehat{m}_0(t)$ be polynomials from Corollary \ref{corollary: n > poly(t) ln m} and set $$m_0(t) = \widehat{m}_0(t), \quad \bar{n}_0(t) = \left(88 \ln 2 + 4 \right)t\cdot \widehat{n}_0(t) + 50t^2.$$ Consider integers $n, m, t$ and a family $\mathcal{F} \subset [m]^n$ such that $\agr(x, y) \neq t - 1$ for any $x, y \in \mathcal{F}$ and $|\mathcal{F}| \geq m^{n-t}$. We will prove that if $m \geq m_0(t)$ and $n \geq \bar{n}_0(t)\ln n$, then we may apply either Corollary \ref{corollary: n > poly(t) ln m} or Theorem \ref{m_geq_nt} and conclude $\mathcal{F} = [m]^n[Z\rightarrow x]$ for some $Z \subset n$ such that $|Z|=t$ and $x \in [m]^{Z}$.

    If $n \geq \widehat{n}_0(t)\ln m$, we apply Corollary $\ref{corollary: n > poly(t) ln m}$ and get the desired conclusion. Otherwise, we get
    $$m \geq e^{\frac{n}{\widehat{n}_0(t)}} \geq e^{(88 \ln 2 + 4)t\ln n} \geq e^{(88 \ln 2)t + 4t \ln n} = \left(2^{22}n\right)^{4t}.$$
    Also we have $n \geq \bar{n}_0(t) \geq 50t^2$. Thus, we may apply Theorem \ref{m_geq_nt} and get $\mathcal{F} = [m]^n[Z\rightarrow x]$ for some $Z \subset n$ such that $|Z|=t$ and $x \in [m]^{Z}$.

	Finally, we will show that for $n \ge \widetilde{n}_0(t) = 2^{32} t^3 \ln t$, we have $n \ge \bar{n}_0(t) \ln n$. Indeed, if $n \ge \widetilde{n}_0(t)$, we have
	\begin{align*}
		n & = n^{1 - \frac{1}{3 \ln t}} n^{1/3 \ln t} \ge {\widetilde{n}_0(t)}^{1-1/3 \ln t} \cdot \frac{\ln n}{3 \ln t} \ge 2^{16} t^{3} \ln t \cdot (t^3 \ln t)^{- \frac{1}{3 \ln t}} \cdot \frac{\ln n}{3 \ln t} \\
		&  \ge 2^{16} e^{-4/3}/3 \cdot t^3 \ln n \ge \bar{n}_0(t) \ln n.
	\end{align*}
	In particular, choosing $n_0(t) = 2^{32} t^4$, we get the result.
\end{proof}

\section*{Acknowledgements}

We thank Andrey Kupavskii for his participation in the initial discussions and for his fruitful suggestions on the final draft of this paper.  We are also grateful to Noam Lifshitz for his comments on Theorem~\ref{theorem: hypercontractivity}. Additionally, Fedor Noskov thanks Pierre Youssef for introducing the semigroup method and its widespread applications.

\printbibliography

\appendix

\section{Proof of Lemma \ref{lemma: unbalanced cross matching}}

Lemma \ref{lemma: unbalanced cross matching} is a quantitative analogue of Theorem 2.3 from \cite{keevash2023forbidden} and may be obtained by a verbatim repetition of its proof. We will prove the lemma by contradiction in the following way:

\textbf{Step 1.} Suppose that $\ff_1$ and $\ff_2$ are cross-agreeing. Using a certain transformation, we will transform the families $\ff_i$ into $\widetilde{\ff}_i$ in such a way that they remain cross-agreeing, become monotone, and have large \(p\)-biased measure.

\textbf{Step 2.} Next, we will move from the \(p\)-biased measure to the uniform measure $\mu_{1/2}$ on \(\{0,1\}^n\), and use the monotonicity of $\widetilde{\ff}_i$ to show that $\mu_{1/2}(\widetilde{\ff}_i)$ is sufficiently large.

\textbf{Step 3.} Finally, we will show that $\mu_{1/2}(\widetilde{\ff}_1) + \mu_{1/2}(\widetilde{\ff}_2) > 1$, which leads to a contradiction with cross-agreeing.

\vspace{10pt}

One part of the transformation in Step 1 will be the following compression operator, defined in Section 4.2 of \cite{keevash2023forbidden}.
\begin{definition}
For any $i \in [n]$ and $j \in [m]$, we define the compression operator $T_{i,j} : [m]^n \to [m]^n$. For $x \in [m]^n$, set $T_{i,j}(x) = y \in [m]^n$ where $y_r = x_r$ for all $r \ne i$, and $y_i = x_i$ if $x_i \ne j$, or $y_i = 1$ if $x_i = j$.

We also define a compression operator, also denoted $T_{i,j}$, on codes:
\[
T_{i,j}(\mathcal{F}) = \{x \mid T_{i,j}(x) \in \mathcal{F} \} \cup \{T_{i,j}(x) \mid x \in \mathcal{F} \}.
\]

We define $T_i = T_{i,2} \circ T_{i,3} \circ \cdots \circ T_{i,m}$ for any $i \in [n]$, and $T = T_1 \circ T_2 \circ \cdots \circ T_n$.

We call $\mathcal{F} \subset [m]^n$ \emph{compressed} if $T(\mathcal{F}) = \mathcal{F}$.
\end{definition}

Another part of the transformation in Step 1 will be the transformation from compressed codes in $[m]^n$ to monotone families in the cube $\{0,1\}^n$.

\begin{definition}[Definition 4.2 from \cite{keevash2023forbidden}]
We define $h : [m] \to \{0,1\}$ by $h(1) = 1$ and $h(a) = 0$ for all $a \ne 1$, and $h^{\otimes n} : [m]^n \to \{0,1\}^n$ by $h^{\otimes n}(x) = \big(h(x_1), \ldots, h(x_n)\big)$. % For any $\mathcal{F} \subset [m]^n$ we let $\widetilde{\mathcal{F}} = h^{\otimes n}(\mathcal{F}) \subset \{0,1\}^n$.
\end{definition}

To describe the properties of these transformations we need the following definition for $\{0,1\}^n$, which is stricter than that for $[m]^n$.

\begin{definition}
Two families $\mathcal{F}, \mathcal{G} \subset \{0,1\}^n$ are called \emph{cross-agreeing} if
\[
\forall x \in \mathcal{F}, \forall y \in \mathcal{G}, \quad \left| \{ i \in [n] : x_i = y_i = 1 \} \right| \ge 1.
\]
\end{definition}

\begin{definition}
A family $\mathcal{A} \subset \{0,1\}^n$ is called \emph{monotone} if, for any $x, y \in \{0, 1\}^n$ such that $x \le y$ coordinatewise, $x \in \mathcal{A}$ implies $y \in \cA$.
\end{definition}

The transformation at Step 1 will have the following properties, that are direct consequences of Fact 4.1 and Fact 4.3 from \cite{keevash2023forbidden}

\begin{lemma}
\label{lemma:fact-4-1-3-keevash}
Let $\mathcal{F}_1, \mathcal{F}_2 \subset [m]^n$ are cross-agreeing. Let $\widetilde{\mathcal{F}}_i = h^{\otimes n}(T(\mathcal{F}_i))$. Then
\begin{enumerate}
    \item For each $i$ the family $\widetilde{\mathcal{F}}_i$ is monotone and $\mu_{1/m}(\widetilde{\mathcal{F}}_i) \ge \mu(\mathcal{F}_i)$.
    \item $\widetilde{\mathcal{F}}_1$ and $\widetilde{\mathcal{F}}_2$ are cross-agreeing.
\end{enumerate}
\end{lemma}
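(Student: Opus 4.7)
The plan is to prove both parts using three standard facts about the compression $T_{i,j}$ that I would verify first: (i) it preserves size, so $\mu(T(\mathcal{F})) = \mu(\mathcal{F})$; (ii) it preserves cross-agreement in $[m]^n$; and (iii) the fully compressed family $T(\mathcal{F})$ has the \textbf{shifting property} that if $y \in T(\mathcal{F})$ and $y_k = j \ne 1$, then the vector $y'$ obtained by replacing $y_k$ with $1$ also lies in $T(\mathcal{F})$. Facts (i) and (iii) follow by inspecting the bijection $\mathcal{F} \to T_{i,j}(\mathcal{F})$ that sends each $y$ either to itself or to $T_{i,j}(y)$, and by iterating over all $(i,j)$. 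Fact (ii) requires a short but nontrivial case analysis: for $x_1 \in T_{i,j}(\mathcal{F}_1), x_2 \in T_{i,j}(\mathcal{F}_2)$ with preimages $y_1, y_2$ in $\mathcal{F}_1, \mathcal{F}_2$, any agreement of $y_1, y_2$ at a coordinate $l \ne i$ (which exists by cross-agreement) propagates directly to $x_1, x_2$; if the agreement is only at $l = i$, the shifting property applied to the unmoved element of the pair produces a second agreement away from $i$.

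For Part (1), monotonicity of $\widetilde{\mathcal{F}}_i = h^{\otimes n}(T(\mathcal{F}_i))$ is immediate from (iii): given $z \in \widetilde{\mathcal{F}}_i$ with preimage $x \in T(\mathcal{F}_i)$ and $z' \ge z$ in $\{0,1\}^n$, at each coordinate $k$ with $z'_k = 1 > z_k$ we have $x_k \ne 1$, and iteratively shifting these coordinates to $1$ yields $x' \in T(\mathcal{F}_i)$ with $h^{\otimes n}(x') = z'$. For the measure bound, each fiber $(h^{\otimes n})^{-1}(z)$ consists of codes equal to $1$ on the $1$-coordinates of $z$ and arbitrary in $\{2, \ldots, m\}$ on the $0$-coordinates, hence has size $(m-1)^{n-|z|}$ and uniform measure $(m-1)^{n-|z|}/m^n = \mu_{1/m}(\{z\})$. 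Summing over $z \in \widetilde{\mathcal{F}}_i$ and using $T(\mathcal{F}_i) \subseteq (h^{\otimes n})^{-1}(\widetilde{\mathcal{F}}_i)$ together with (i),
\[
\mu_{1/m}(\widetilde{\mathcal{F}}_i) = \mu\bigl((h^{\otimes n})^{-1}(\widetilde{\mathcal{F}}_i)\bigr) \ge \mu(T(\mathcal{F}_i)) = \mu(\mathcal{F}_i).
\]

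For Part (2), fix $z \in \widetilde{\mathcal{F}}_1, w \in \widetilde{\mathcal{F}}_2$ with preimages $x \in T(\mathcal{F}_1), y \in T(\mathcal{F}_2)$; I need a coordinate $k$ with $x_k = y_k = 1$. Suppose for contradiction none exists. By (ii) applied inductively through the construction of $T$, the families $T(\mathcal{F}_1), T(\mathcal{F}_2)$ are cross-agreeing in $[m]^n$, so $J = \{k : x_k = y_k\}$ is nonempty, and by assumption $x_k = y_k = a_k \ne 1$ for every $k \in J$. Applying (iii) coordinate-by-coordinate over $J$ yields $x' \in T(\mathcal{F}_1)$ with $x'_k = 1$ on $J$ and $x'_k = x_k$ off $J$. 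But then $x'$ and $y$ disagree on every coordinate --- on $J$ because $x'_k = 1 \ne a_k = y_k$, off $J$ because $x_k \ne y_k$ --- contradicting cross-agreement of $T(\mathcal{F}_1), T(\mathcal{F}_2)$.

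The main obstacle is fact (ii), since a single application of $T_{k,j}$ can simultaneously move elements of $\mathcal{F}_1$ and $\mathcal{F}_2$ and could in principle shift an agreement of a preimage pair into a disagreement. The cleanest workaround is the one sketched above: when the preimage agreement lies only at the compressed coordinate $i$, the element that did \emph{not} move must already contain its $1$-shifted version in its family, which supplies an additional agreement at some coordinate $\ne i$ that the compression preserves verbatim.
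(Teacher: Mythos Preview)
Your proof is correct and follows the standard compression argument; the paper itself does not give a proof but defers to Facts~4.1 and~4.3 of Keevash--Lifshitz--Long--Minzer, whose content is exactly what you reconstruct.

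Two minor expository points. First, in your treatment of fact~(ii) the phrase ``the shifting property applied to the unmoved element'' is not your fact~(iii) (which concerns the fully compressed $T(\mathcal{F})$), but rather the local observation that if $y_1\in\mathcal{F}_1$ has $(y_1)_i=j$ and was \emph{kept} by $T_{i,j}$, then by definition of the compression $T_{i,j}(y_1)\in\mathcal{F}_1$; it is this element whose forced agreement with $y_2$ at some $l\neq i$ yields the contradiction. You should also note that the cases where both preimages are moved, both are unmoved, or the common value at $i$ is not $j$ are trivial (the images then agree at coordinate $i$), so only the ``one moved, one not'' case needs the argument you give. Second, with the paper's compression definition as literally written the size can increase, so fact~(i) gives $|T(\mathcal{F})|\ge|\mathcal{F}|$ rather than equality; this is harmless since you only need the inequality $\mu_{1/m}(\widetilde{\mathcal{F}}_i)\ge\mu(\mathcal{F}_i)$.
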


For Step 2, we will need the following lemma of Ellis, Keller, and Lifshitz~\cite{ellis2019stability} to obtain a lower bound on $\mu_{1/2}$ from $\mu_{1/m}$.

\begin{lemma}
\label{lemma:Ellis-Keller-Lifshitz}
\textit{Suppose $0 \le p \le q \le 1$, $\alpha \ge 0$ and $\mathcal{F} \subset \{0,1\}^n$ is monotone. If $\mu_p(\mathcal{F}) \ge p^\alpha$, then $\mu_q(\mathcal{F}) \ge q^\alpha$.}
\end{lemma}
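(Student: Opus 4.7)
\smallskip

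\noindent\textbf{Proof proposal.} I will follow the three-step strategy sketched at the beginning of the appendix, arguing by contradiction. Suppose $\mathcal{F}_1, \mathcal{F}_2 \subset [m]^n$ are cross-agreeing, meaning $\agr(x_1, x_2) \ge 1$ for every $x_1 \in \mathcal{F}_1, x_2 \in \mathcal{F}_2$. The plan is to transport this assumption from the product space $[m]^n$ to the Boolean cube $\{0,1\}^n$ with an adjustable product measure, boost that measure up to $p=1/2$ using the monotonicity after compression, and then exploit the classical fact that cross-intersecting families of sets have total $\mu_{1/2}$-measure at most $1$.

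\emph{Step 1 (passing to the cube).} Apply the compression operator $T$ of Definition~4.2 from~\cite{keevash2023forbidden} and then the projection $h^{\otimes n}$ to each family, obtaining $\widetilde{\mathcal{F}}_i = h^{\otimes n}(T(\mathcal{F}_i)) \subset \{0,1\}^n$. By Lemma~\ref{lemma:fact-4-1-3-keevash}, the families $\widetilde{\mathcal{F}}_i$ are monotone (upward closed), they remain cross-agreeing in the $\{0,1\}^n$ sense (every $\widetilde{x}_1 \in \widetilde{\mathcal{F}}_1$ and $\widetilde{x}_2 \in \widetilde{\mathcal{F}}_2$ share a coordinate where both are $1$), and
$$
\mu_{1/m}(\widetilde{\mathcal{F}}_i) \ge \mu(\mathcal{F}_i), \qquad i = 1, 2.
$$

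\emph{Step 2 (raising the bias from $1/m$ to $1/2$).} For $\widetilde{\mathcal{F}}_1$ I will use only monotonicity of $p \mapsto \mu_p(\widetilde{\mathcal{F}}_1)$, which gives $\mu_{1/2}(\widetilde{\mathcal{F}}_1) \ge \mu_{1/m}(\widetilde{\mathcal{F}}_1) \ge \mu(\mathcal{F}_1)$. For $\widetilde{\mathcal{F}}_2$ I will use Lemma~\ref{lemma:Ellis-Keller-Lifshitz}: set $\alpha = \log_m(1/\mu(\mathcal{F}_2))$ so that $\mu(\mathcal{F}_2) = (1/m)^{\alpha}$; the hypothesis $\mu_{1/m}(\widetilde{\mathcal{F}}_2) \ge (1/m)^{\alpha}$ then yields
$$
\mu_{1/2}(\widetilde{\mathcal{F}}_2) \ge (1/2)^{\alpha} = \mu(\mathcal{F}_2)^{\log_m 2}.
$$
Adding these two bounds and invoking the hypothesis of the lemma gives $\mu_{1/2}(\widetilde{\mathcal{F}}_1) + \mu_{1/2}(\widetilde{\mathcal{F}}_2) > 1$.

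\emph{Step 3 (deriving the contradiction).} Identify $\{0,1\}^n$ with $2^{[n]}$. The cross-agreeing property says that for every $A \in \widetilde{\mathcal{F}}_1$ and $B \in \widetilde{\mathcal{F}}_2$ we have $A \cap B \ne \varnothing$, equivalently $A \ne [n] \setminus B$ (and more generally $A \not\subset [n]\setminus B$). Thus the map $B \mapsto [n]\setminus B$ sends $\widetilde{\mathcal{F}}_2$ into the complement of $\widetilde{\mathcal{F}}_1$ in $2^{[n]}$. Since $\mu_{1/2}$ is invariant under taking complements,
$$
\mu_{1/2}(\widetilde{\mathcal{F}}_1) + \mu_{1/2}(\widetilde{\mathcal{F}}_2) = \mu_{1/2}(\widetilde{\mathcal{F}}_1) + \mu_{1/2}\bigl(\{[n]\setminus B : B \in \widetilde{\mathcal{F}}_2\}\bigr) \le 1,
$$
contradicting Step 2. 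This finishes the proof.

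\emph{Anticipated obstacles.} None of the three steps should be hard: Step 1 is a direct appeal to the two cited facts from~\cite{keevash2023forbidden}, Step 3 is essentially the one-line complementation argument for cross-intersecting families under $\mu_{1/2}$, and the only non-trivial input is Step 2, which is a black-box application of the Ellis--Keller--Lifshitz monotonicity lemma. The main point to check carefully is the exponent bookkeeping: the hypothesis is $\mu(\mathcal{F}_1)+\mu(\mathcal{F}_2)^{\log_m 2}>1$, and the asymmetric treatment of the two families in Step 2 (plain monotonicity on one side, the $p^{\alpha}$ bound on the other) is precisely what produces that exponent on the second term while leaving the first term untouched.
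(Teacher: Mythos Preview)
Your proposal does not address the stated lemma at all. The statement you were asked to prove is Lemma~\ref{lemma:Ellis-Keller-Lifshitz}, the Ellis--Keller--Lifshitz monotonicity result: for a monotone $\mathcal{F}\subset\{0,1\}^n$, $\mu_p(\mathcal{F})\ge p^\alpha$ implies $\mu_q(\mathcal{F})\ge q^\alpha$ whenever $p\le q$. What you have actually written is a proof of Lemma~\ref{lemma: unbalanced cross matching} (the unbalanced cross-disagreement lemma for codes), and in your Step~2 you explicitly \emph{invoke} Lemma~\ref{lemma:Ellis-Keller-Lifshitz} as a black box. So as a proof of the target statement your argument is circular: you are assuming precisely what you are supposed to establish.

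For the record, the paper does not supply its own proof of Lemma~\ref{lemma:Ellis-Keller-Lifshitz} either; it is quoted from~\cite{ellis2019stability}. An actual proof requires a genuinely different idea, for instance a coupling of $\mu_p$ and $\mu_q$ together with a shadow/Kruskal--Katona-type bound showing that $p\mapsto \log\mu_p(\mathcal{F})/\log p$ is monotone for upward-closed $\mathcal{F}$. None of the machinery you deploy (compressions on $[m]^n$, the projection $h^{\otimes n}$, the cross-intersecting complement trick) is relevant to that. Incidentally, your argument \emph{is} a correct proof of Lemma~\ref{lemma: unbalanced cross matching} and matches the paper's appendix essentially line for line---you have simply aimed at the wrong target.
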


For Step 3, we will need the following result.
\begin{lemma}[Fact 4.11 from \cite{keevash2023forbidden}]
\label{lemma:fact-4-11-keevash}
If $\mathcal{A}, \mathcal{B} \subset \{0,1\}^n$ are cross-agreeing, then
\[
\mu(\mathcal{A}) + \mu(\mathcal{B}) \le 1.
\]
\end{lemma}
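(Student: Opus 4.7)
The plan is to argue by contradiction along the three-step roadmap laid out immediately before the statement. Assume that $\ff_1,\ff_2$ are cross-agreeing, i.e. $\agr(x_1,x_2)\ge 1$ for every $x_1\in\ff_1$, $x_2\in\ff_2$, and derive a contradiction with Lemma~\ref{lemma:fact-4-11-keevash}.

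First I would push both families through the compression operator $T$ and then through $h^{\otimes n}$, obtaining $\widetilde\ff_i=h^{\otimes n}(T(\ff_i))\subset\{0,1\}^n$. Lemma~\ref{lemma:fact-4-1-3-keevash} then hands me everything needed from this step: both $\widetilde\ff_i$ are monotone, they remain cross-agreeing, and $\mu_{1/m}(\widetilde\ff_i)\ge\mu(\ff_i)$ for $i=1,2$.

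Next I would transfer each of these $p=1/m$ biased measures to the uniform measure $\mu_{1/2}$ on the cube, but in asymmetric ways. For $\widetilde\ff_1$ the monotonicity of $p\mapsto\mu_p(\widetilde\ff_1)$ for monotone families is enough: since $m\ge 2$, $\mu_{1/2}(\widetilde\ff_1)\ge\mu_{1/m}(\widetilde\ff_1)\ge\mu(\ff_1)$. For $\widetilde\ff_2$ I would invoke Lemma~\ref{lemma:Ellis-Keller-Lifshitz} with $p=1/m$, $q=1/2$, and $\alpha=\log_m(1/\mu(\ff_2))$, chosen precisely so that $p^\alpha=\mu(\ff_2)\le\mu_{1/m}(\widetilde\ff_2)$. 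The lemma then gives $\mu_{1/2}(\widetilde\ff_2)\ge(1/2)^{\log_m(1/\mu(\ff_2))}$, and a direct rewriting using $\log_m 2=\ln 2/\ln m$ turns this into $\mu(\ff_2)^{\log_m 2}$. This small algebraic identity is the whole point where the asymmetric exponent in the hypothesis of the lemma comes in naturally.

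Summing the two lower bounds and applying the hypothesis yields
\[
\mu_{1/2}(\widetilde\ff_1)+\mu_{1/2}(\widetilde\ff_2)\ge\mu(\ff_1)+\mu(\ff_2)^{\log_m 2}>1,
\]
while Lemma~\ref{lemma:fact-4-11-keevash}, applied to the cross-agreeing families $\widetilde\ff_1,\widetilde\ff_2\subset\{0,1\}^n$ under $\mu_{1/2}$, bounds the same sum by $1$. This contradicts the cross-agreeing assumption, so there must exist $x_i\in\ff_i$ with $\agr(x_1,x_2)=0$. There is no real obstacle beyond the exponent rewriting in Step~2; the rest is a direct assembly of the three cited lemmas.
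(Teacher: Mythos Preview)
You have proved the wrong statement. The lemma in question is Lemma~\ref{lemma:fact-4-11-keevash}: if $\cA,\cB\subset\{0,1\}^n$ are cross-agreeing then $\mu(\cA)+\mu(\cB)\le 1$. Your argument is instead a proof of Lemma~\ref{lemma: unbalanced cross matching} about families in $[m]^n$, and in its final step it \emph{invokes} Lemma~\ref{lemma:fact-4-11-keevash} as a black box. So as a proof of the stated lemma it is circular. The ``three-step roadmap laid out immediately before the statement'' that you followed is the paper's outline for Lemma~\ref{lemma: unbalanced cross matching}, not for Lemma~\ref{lemma:fact-4-11-keevash}; the latter is merely quoted from \cite{keevash2023forbidden} and not proved in the present paper at all.

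The actual content of Lemma~\ref{lemma:fact-4-11-keevash} is a one-line complement argument: for $x\in\{0,1\}^n$ write $\bar x$ for its bitwise complement. If $x\in\cA$ then $\bar x\notin\cB$, since $x$ and $\bar x$ share no coordinate equal to $1$. Hence $x\mapsto\bar x$ injects $\cA$ into $\{0,1\}^n\setminus\cB$, giving $|\cA|+|\cB|\le 2^n$. None of the machinery you used (compressions, $h^{\otimes n}$, the Ellis--Keller--Lifshitz lemma) is relevant here.
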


% \begin{lemma}
%     Let $\mathcal{F}_1, \mathcal{F}_2 \subset [m]^{n}$ and let $\mu$ be uniform measure on $[m]^n$. If $\mu(\mathcal{F}_1) + \mu(\mathcal{F}_2)^{\log_{m}(2)} > 1$, then there exist $x_i \in \mathcal{F}_i$ such that $\agr(x_1, x_2) = 0$.
% \end{lemma}

Now we are ready to prove Lemma \ref{lemma: unbalanced cross matching}.

\begin{proof}
    \textbf{Step 1.} Assume, for the sake of contradiction, that the families \(\mathcal{F}_1\) and \(\mathcal{F}_2\) are cross-agreeing. Define the transformed families:
    \[
    \widetilde{\mathcal{F}}_i = h^{\otimes n}(T(\mathcal{F}_i)), \quad \text{for } i = 1, 2.
    \]
    By Lemma \ref{lemma:fact-4-1-3-keevash} the families \(\widetilde{\mathcal{F}}_1\) and \(\widetilde{\mathcal{F}}_2\) are cross-agreeing, monotone and satisfy
    \[
    \mu_{1/m}(\widetilde{\mathcal{F}}_i) \geq \mu(T(\mathcal{F}_i)) = \mu(\mathcal{F}_i), \quad \text{for } i = 1, 2,
    \]
    where \(\mu_p\) denotes the \(p\)-biased measure on \(\{0,1\}^n\).

    \textbf{Step 2.} Now consider \(\widetilde{\mathcal{F}}_i\) under the uniform measure \(\mu_{1/2}\). By monotonicity, we have
    $
    \mu_{1/2}(\widetilde{\mathcal{F}}_1) \geq \mu_{1/m}(\widetilde{\mathcal{F}}_1)
    $. By Lemma \ref{lemma:Ellis-Keller-Lifshitz}, we obtain
    \[
    \mu_{1/2}(\widetilde{\mathcal{F}}_2) \geq \left( \frac{1}{2} \right)^{\log_{1/m} \left( \mu_{1/m}(\widetilde{\mathcal{F}}_2) \right)} = \mu_{1/m}(\widetilde{\mathcal{F}}_2)^{\log_m(2)}.
    \]

    \textbf{Step 3.}
    Combining the above inequalities yields
    \[
    \begin{aligned}
    \mu_{1/2}(\widetilde{\mathcal{F}}_1) + \mu_{1/2}(\widetilde{\mathcal{F}}_2) 
    &\geq \mu_{1/m}(\widetilde{\mathcal{F}}_1) + \mu_{1/m}(\widetilde{\mathcal{F}}_2)^{\log_m(2)} \\
    &\geq \mu(\mathcal{F}_1) + \mu(\mathcal{F}_2)^{\log_m(2)} \\
    &> 1,
    \end{aligned}
    \]
    
    which contradicts Lemma \ref{lemma:fact-4-11-keevash}, completing the proof.
\end{proof}

\end{document}